\newcommand{\be}{\mathbf{E}}
\newcommand{\bp}{\mathbf{P}}
\newcommand{\dom}{{\rm Dom}}
\newcommand{\id}{{\rm Id}}
\newcommand{\iot}{\int_{0}^{t}}
\newcommand{\ott}{[0,T]}
\newcommand{\1}{{\bf 1}}
\newcommand{\cac}{{\mathcal C}}
\newcommand{\ce}{{\mathcal E}}
\newcommand{\cf}{{\mathcal F}}
\newcommand{\ch}{{\mathcal H}}
\newcommand{\cl}{{\mathcal L}}
\newcommand{\cn}{{\mathcal N}}
\newcommand{\cs}{{\mathcal S}}
\newcommand{\al}{\alpha}
\newcommand{\ga}{\gamma}
\newcommand{\ep}{\varepsilon}
\newcommand{\ka}{\kappa}
\newcommand{\la}{\lambda}
\newcommand{\oom}{\Omega}
\newcommand{\si}{\sigma}
\newcommand{\te}{\theta}
\newcommand{\tte}{\Theta}
\newcommand{\vp}{\varphi}
\newcommand{\D}{{\mathbb D}}
\newcommand{\R}{{\mathbb R}}
\newcommand{\lla}{\left\langle}
\newcommand{\rra}{\right\rangle}
\newcommand{\lp}{\left(}
\newcommand{\rp}{\right)}
\newcommand{\lc}{\left[}
\newcommand{\rc}{\right]}
\newcommand{\lln}{\left|}
\newcommand{\rrn}{\right|}
\newcommand{\bean}{\begin{eqnarray*}}
\newcommand{\eean}{\end{eqnarray*}}
\newcommand{\ben}{\begin{enumerate}}
\newcommand{\een}{\end{enumerate}}
\newcommand{\beq}{\begin{equation}}
\newcommand{\eeq}{\end{equation}}
\newtheorem{theorem}{Theorem}[section]
\newtheorem{hypothesis}[theorem]{Hypothesis}
\newtheorem{lemma}[theorem]{Lemma}
\newtheorem{notation}[theorem]{Notation}
\newtheorem{proposition}[theorem]{Proposition}
\theoremstyle{remark}
\newtheorem{remark}[theorem]{Remark}
\newtheorem{example}[theorem]{Example}
\begin{document}

\title{On inference for fractional differential equations}
\author[A. Chronopoulou \and S. Tindel]{Alexandra Chronopoulou \and Samy Tindel}
\date{\today}

\thanks{S. Tindel is partially supported by the ANR grant ECRU. Both authors are part of the BIGS (Biology, Genetics and Statistics) team at INRIA Nancy}

\subjclass[2010]{Primary 60H35; Secondary 60H07, 60H10, 65C30, 62M09}

\date{\today}

\keywords{Fractional Brownian motion,  Stochastic differential equations, Malliavin calculus, Inference for stochastic processes}

\begin{abstract}
Based on Malliavin calculus tools and approximation results, we show how to compute a maximum likelihood type estimator for a rather general differential equation driven by a fractional Brownian motion with Hurst parameter $H>1/2$. Rates of convergence for the approximation task are provided, and numerical experiments show that our procedure leads to good results in terms of estimation.
\end{abstract}

\address{
{\it Alexandra Chronopoulou, Samy Tindel:}
{\rm Institut \'Elie Cartan Nancy, B.P. 239,
54506 Vandoeuvre-l\`es-Nancy Cedex, France}.
{\it Email: }{\tt Alexandra.Chronopoulou@iecn.u-nancy.fr, tindel@iecn.u-nancy.fr}
}

\maketitle

\section{Introduction}

In this introduction, we first try to motivate our problem and outline our results. We also argue that only a part of the question can be dealt with in a single paper. We briefly sketch a possible program for the remaining tasks in a second part of the introduction.

\subsection{Motivations and outline of the results}
\label{sec:intro-outline}
The inference problem for diffusion processes is now a fairly well understood problem. In particular, during the last two decades, several advances have allowed to tackle the problem of inference based on discretely observed diffusions \cite{DG,Pe,So}, which is of special practical interest. 

\smallskip

More specifically, consider a family of stochastic differential equations of the form
\begin{equation}\label{eq:sde}
Y_t=a+\iot \mu(Y_s;\te) \, ds +\sum_{l=1}^{d}\iot \si^l(Y_s;\te)\, dB_s^l, \qquad t\in\ott,
\end{equation}
where $a\in\R^m$, $\mu(\cdot;\te):\R^m\to\R^m$ and $\si(\cdot;\te):\R^m\to\R^{m,d}$ are smooth enough functions, $B$ is a $d$-dimensional Brownian motion and $\te$ is a parameter varying in a subset $\Theta\subset\R^q$. If one wishes to identify $\te$ from a set of discrete observations of $Y$, most of the methods which can be found in the literature are based on (or are closely linked to) the maximum likelihood principle. Indeed, if $B$ is a Brownian motion and $Y$ is observed at some equally distant instants $t_i=i\tau$ for $i=0,\ldots,n$, then the log-likelihood of a sample $(Y_{t_{1}},\ldots,Y_{t_{n}})$ can be expressed as 
\begin{equation}\label{eq:log-lik}
\ell_n(\te)=\sum_{i=1}^{n} \ln\lp p\lp  \tau,Y_{t_{i-1}},Y_{t_{i}};\, \te\rp \rp,
\end{equation}
where $p$ stands for the transition semi-group of the diffusion $Y$. If $Y$ enjoys some ergodic properties, with invariant measure $\nu_{\te_0}$ under $\bp_{\te_0}$, then we get
\begin{equation}\label{eq:cvgce-log-lik-diffusion}
{\rm a.s.-}\lim_{n\to\infty}\frac{1}{n}\ell_n(\te)
=\be_{\te_0}\lc p\lp  \tau,Z_1,Z_2;\, \te\rp\rc\triangleq J_{\te_0}(\te),
\end{equation}
where $Z_1\sim\nu_{\te_0}$ and $\cl(Z_2|\, Z_1)=p(\tau,Z_1,\cdot\, ;\,\te)$. Furthermore, it can be shown in a general context that $\te\mapsto J_{\te_0}(\te)$ admits a maximum at $\te=\te_0$. This opens the way to a MLE analysis which is similar to the one performed in the case of i.i.d observations, at least theoretically.

\smallskip

However, in many interesting cases, the transition semi-group $p$ is not amenable to explicit computations, and thus expression (\ref{eq:log-lik}) has to be approximated in some sense. The most common approach, advocated for instance in \cite{Pe}, is based on a linearization of each $p(  \tau,Y_{t_{i-1}},Y_{t_{i}};\, \te)$, which transforms it into a Gaussian density
$$
\cn\lp Y_{t_{i-1}}+\mu(Y_{t_{i-1}};\te) \,\tau, \, \si\si^*(Y_{t_{i-1}};\te)\,\tau\rp.
$$
This linearization procedure is equivalent to the approximation of equation (\ref{eq:sde}) by an Euler (first order) numerical scheme. Refinements of this procedure, based on Milstein type discretizations, are proposed in  \cite{DG}.

\smallskip

Some special situations can be treated differently (and often more efficiently): for instance, in case of a constant diffusion coefficient, the continuous time likelihood can be computed explicitly by means of Girsanov's theorem. When the dimension of the driving Brownian motion $B$ is $d=1$, one can also apply It\^o's formula in order to be back to an equation with constant diffusion coefficient, or use Doss-Sousman representation of solutions to (\ref{eq:sde}). Let us also mention that statistical inference for SDEs driven by L\'evy processes is currently  intensively investigated, with financial motivations in mind.

\smallskip

The current article is concerned with the estimation problem for equations of the form~(\ref{eq:sde}), when the driving process $B$ is a fractional Brownian motion. Let us recall that a fractional Brownian motion $B$ with Hurst parameter $H\in(0,1)$, defined on a complete probability space $(\oom,\cf,\bp)$, is a $d$-dimensional centered Gaussian process. Its law is thus characterized by its covariance function, which is given by
$$
\be \lc B_t^i B_s^j \rc= \frac 12 \lp t^{2H} + s^{2H} - |t-s|^{2H}  \rp \, \1_{(i=j)},
\qquad s,t\in\R_+.
$$
The variance of the   increments of $B$ is  then given by
$$
\be\lc  \lp  B_t^i-B_s^i \rp^2\rc = |t-s|^{2H}, \qquad s,t\in\R_+, \quad i=1,\ldots, d,
$$
 and this implies that  almost surely the fBm paths are $\gamma$-H\"{o}lder
  continuous for any $\gamma<H$. Furthermore, for $H=1/2$, fBm coincides with the usual Brownian motion,
   converting the family $\{B^H;\, H\in(0,1)\}$ into the most natural generalization of this classical process.

\smallskip

In the last decade, some important advances have allowed to solve \cite{NR,Za} and understand \cite{HN,NS} differential systems driven by fBm for $H\in(1/2,1)$. The rough paths machinery also allows to handle fBm with $H\in(1/4,1/2)$, as nicely explained in \cite{FV,Gu,lejay,LyonsBook}. However, the irregular situation $H\in(1/4,1/2)$ is not amenable to useful moments estimates for the solution $Y$ to (\ref{eq:sde}) together with its Jacobian (that is the derivative with respect to the initial condition). This is why we concentrate, in the sequel, on the simpler case $H>1/2$ for our estimation problem. In any case, many real world noisy systems are currently modeled by equations like (\ref{eq:sde}) driven by fBm, and this is particularly  present in the Biophysics literature, as assessed by \cite{KS,OTHP}, or for Finance oriented applications as in \cite{Ch,Gua,HOk,HOS,Ro,WTT}. This leads to a demand for rigorous estimation procedures for SDEs driven by fractional Brownian motion, which is the object of our paper.

\smallskip

Concerns about the inference problem for fractional diffusion processes started a decade ago with the analysis of fractional Ornstein-Uhlenbeck processes \cite{KL}. Then a more recent representative set of references on the topic includes~\cite{PL,TV}. More specifically, \cite{TV} handles the case of a one-dimensional equation of the form
\begin{equation}\label{eq:sde-cipi-fred}
Y_t=a+ \te \iot \mu(Y_s) \, ds + B_t,  \qquad t\in\ott,
\end{equation}
where $\mu$ is regular enough, and where $B$ is a fBm with $H\in(0,1)$. The simple dependence on the parameter $\te$ and the fact that an additive noise is considered enables the use of Girsanov's transform in order to get an exact expression for the MLE. Convergence of the estimator is then obtained through an extensive use of Malliavin calculus.

\smallskip

As far as \cite{PL} is concerned, it is focused on the case of a polynomial equation, for which the exact moments of the solution can be computed. The estimator relies then on a generalization of the moment method, which tries to fit empirical moments of the solution with their theoretical value. The range of application of this method is however confined to specific situations, for the following reasons:
\begin{itemize}
\item 
It assumes that $N$ independent runs of equation (\ref{eq:sde}) can be obtained, which is usually not the case.
\item
It hinges on multiple integrals computations, which are time consuming and are avoided in most numerical schemes. 
\end{itemize}
As can be seen from this brief review, parameter estimation for rough equations is still in its infancy. We shall also argue that it is a hard problem.

\smallskip

Indeed, if one wishes to transpose the MLE methods used for diffusion processes to the fBm context,  an equivalent of the log-likelihood functions  (\ref{eq:log-lik}) should first be produced. But the covariance structure of $B$ is quite complex and the attempts to put the law of $Y$ defined by (\ref{eq:sde}) into a semigroup setting are cumbersome, as illustrated by \cite{BC,HO,NNRT}. We have thus decided to consider a highly simplified version of the log-likelihood. Namely, still assuming that $Y$ is observed at  a discrete set of instants $0<t_1<\cdots<t_n<\infty$, set
\begin{equation}\label{eq:lln-rough}
\ell_n(\te)=\sum_{i=1}^{n} \ln\lp f(t_i,Y_{t_i};\te)\rp,
\end{equation}
where we suppose that under $\bp_\te$ the random variable $Y_{t_i}$ admits a density $z\mapsto f(t_i, z;\te)$. Notice that in case of an elliptic diffusion coefficient $\si$ the density $f(t_i,\cdot;\te)$ is strictly positive, and thus expression (\ref{eq:lln-rough}) makes sense by a straightforward application of \cite[Proposition 19.6]{FV}. However, the successful replication of the strategy implemented for Brownian diffusions (that we have tried to summarize above) relies on some highly non trivial questions: existence of an invariant measure for equation (\ref{eq:sde}), rate of convergence to this invariant measure, convergence of expressions like (\ref{eq:lln-rough}), characterization of the limit in terms of $\te$ as in (\ref{eq:cvgce-log-lik-diffusion}), to mention just a few. We shall come back to these considerations in the next section, but let us insist at this point on the fact that all those questions would fit into a research program over several years. 

\smallskip

Our aim in this paper is in a sense simpler: we assume that quantities like (\ref{eq:lln-rough}) are meaningful for estimation purposes. Then we shall implement a method which enables to compute $\ell_n(\te)$ and optimize it in $\te$, producing thus a pseudo MLE estimator. We focused first on this specific aspect of the problem for the following reasons:
\begin{enumerate}
\item 
From a statistical point of view, it is obviously essential to obtain a computationally efficient estimation procedure. This will allow us for instance to evaluate numerically the accuracy of our method. 
\item
The procedure itself is nontrivial, and requires the use of advanced stochastic analysis tools: probabilistic representation of the density, Malliavin type integration by parts, Stratonovich-Skorohod correction terms, discretization of systems of pathwise stochastic differential equations for instance.
\end{enumerate}
We have thus decided to tackle the implementation issues first. If it turns out to be satisfying, we shall then try to proceed to a full justification of our method.

\smallskip

Let us also mention that it might not be clear to the reader that $\ell_n(\te)$ can be meaningful in terms of statistical estimation, since  it only involves evaluations at single points $Y_{t_i}$. However our numerical experiments indicate that this quantity behaves nicely for our purposes. Moreover, it will become clear from the forthcoming computations that our methodology can be extended to handle quantities like 
$$
\tilde\ell_n(\te):=\sum_{i=1}^{n} \ln\lp f(t_i,t_{i+1},Y_{t_i},Y_{t_{i+1}};\te)\rp,
$$
where $f(s,t,x,z;\te)$ stands for the density of the couple $(Y_s,Y_t)$. This kind of pseudo log-likelihood is obviously closer in spirit to the diffusion case. Densities of tuples could also be considered at the price of technical complications.

\smallskip

Let us now try to give a flavor of the kind of result we shall obtain in this article, in a very loose form:
\begin{theorem}
Consider Equation (\ref{eq:sde}) driven by a $d$-dimensional fractional Brownian motion $B$ with Hurst parameter $H>1/2$. Assume $\mu$ and $\si$ are smooth enough coefficients, and that $\si \si^*$ is strictly elliptic. For a sequence of times $t_0<\cdots <t_n<\infty$, let  $y_{t_{i}}$, $i=1,\ldots,n$ be the corresponding observations. Then:

\smallskip

\noindent
\emph{(i)} The gradient of the log-likelihood function admits the following probabilistic representation:
$\nabla_{l}\ell_n(\te) =\sum_{i=1}^{n}\frac{V_i(\te)}{W_i(\te)}$, with
\begin{equation}\label{eq:expr-W-i-intro}
W_i(\te)=\mathbf{E} \biggl[ \mathbf{1}_{(Y_{t_{i}}(\theta)>y_{t_{i}})}\; H_{(1,\ldots,m)}\Bigl( Y_{t_{i}}(\theta) \Bigr) \biggr]
\end{equation}
where  $H_{(j_1,\ldots,j_n)}( Y_{t_{i}}(\theta))$ is an expression involving Malliavin derivatives an Skorohod integrals of $Y(\theta)$. A similar expression is also available for $V_i(\te)$.

\smallskip

\noindent
\emph{(ii)} A computational procedure is constructed in order to obtain $H_{(1,\ldots,m)}( Y_{t_{i}}(\theta))$ in a suitable way.

\smallskip

\noindent
\emph{(iii)} When $Y_t$ is replaced by its Euler scheme approximation with step $T/M$ and expected values in (\ref{eq:expr-W-i-intro}) are approximated thanks to $N$ Monte Carlo steps, we show that 
\begin{itemize}
\item 
$N$ can be chosen in function of $M$ in an optimal way (see Proposition \ref{prop:MCvsEU}).
\item
The corresponding approximation of $\nabla_{l}\ell_n(\te)$ converges to the real one with rate $n^{-(2\ga-1)}$ for any $1/2<\ga<H$.
\end{itemize}
\end{theorem}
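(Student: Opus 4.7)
The plan is to prove (i)--(iii) in sequence, using the Malliavin-calculus toolbox available for fBm with $H>1/2$. For part (i), I start from the identity
$\nabla_l \ln f(t_i,y_{t_i};\te) = \lp\partial_{\te_l} f(t_i,y_{t_i};\te)\rp/ f(t_i,y_{t_i};\te)$,
so two probabilistic representations are required, one for $f$ and one for its $\te_l$-derivative. The strict ellipticity of $\si\si^*$ combined with $H>1/2$ yields, as in \cite{HN,NS}, a non-degenerate Malliavin matrix for $Y_{t_i}(\te)$ and a smooth density. Iterating $m$ times the duality between the Malliavin derivative $D$ and the Skorohod integral $\delta$, applied to the distributional identity $\partial_{1}\cdots\partial_{m}\1_{(\cdot>y)} = \delta_y$, then produces
$$
f(t_i,y_{t_i};\te) = \be\lc \1_{(Y_{t_i}(\te)>y_{t_i})}\,H_{(1,\ldots,m)}\bigl(Y_{t_i}(\te)\bigr)\rc,
$$
which is $W_i(\te)$. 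The analogous formula for $V_i(\te)$ follows by differentiating inside the expectation with respect to $\te_l$---legitimate thanks to $L^p$-bounds on the weights---and re-applying the integration-by-parts lemma, the $\te$-differentiation bringing into play the Jacobian process $J^\te_{t_i} = \partial_\te Y_{t_i}(\te)$, itself the solution of a linear equation driven by $B$.

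For part (ii), the procedure rests on discretising the coupled pathwise system formed by (\ref{eq:sde}), its Jacobian equation in $\te$, and the linear equation satisfied by $s\mapsto D_s Y_t(\te)$, using a first-order Euler scheme of step $T/M$. The weight $H_{(1,\ldots,m)}$ is a product of entries of the inverse Malliavin covariance and of nested Skorohod integrals $\delta(u)$; each such $\delta(u)$ is rewritten as a Young integral of $u$ against $B$ minus the Stratonovich--Skorohod correction, which involves the singular kernel $|s-r|^{2H-2}$ paired with a trace of $Du$. Once every building block has been reduced to pathwise quantities, it can be evaluated along simulated trajectories.

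For part (iii), I decompose the total error $|\widehat\ell_n(\te)-\ell_n(\te)|$ into a Monte Carlo component and an Euler-discretisation component. The first is $O(N^{-1/2})$ by the CLT, with variance bounded uniformly in $M$ and $\te$ thanks to $L^p$-estimates on the Malliavin weights. The second is handled by writing $Y-Y^M$ as the solution of a perturbed Young SDE and importing the known rate $M^{-(2\ga-1)}$ for Euler schemes of Young-type equations, for any $\ga<H$; this rate transfers to the Jacobian and to the Malliavin derivative by Gr\"onwall-type arguments and propagates to $H_{(1,\ldots,m)}$ through Cauchy--Schwarz. Balancing the two contributions yields $N\asymp M^{2(2\ga-1)}$, which is the content of Proposition \ref{prop:MCvsEU}, and summing over $i=1,\ldots,n$ produces the announced global rate.

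The main obstacle I foresee lies in (ii)--(iii): preserving the Euler rate when approximating the Skorohod weights. The correction kernel $|s-r|^{2H-2}$ is integrable but singular, and a naive discretisation produces a logarithmic loss; a careful pairing of increments together with a Taylor expansion of the integrand across the diagonal will be needed to absorb the singularity. A second delicate point is ensuring that the bounds on $H_{(1,\ldots,m)}$ and on the inverse Malliavin matrix are uniform in $\te\in\Theta$, since this is what allows the errors in the numerator and denominator of $V_i/W_i$ to propagate to the log-likelihood without blow-up.
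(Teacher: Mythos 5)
Your outline of part (i) for $W_i$, of part (ii), and of the Euler analysis in part (iii) follows essentially the paper's route (integration by parts \`a la Nualart for the density, reduction of the Skorohod weights to Young integrals plus the $|s-r|^{2H-2}$ correction, Euler discretization of the coupled system for $Y$, $\nabla_\te Y$, $DY$ and $\ga^{-1}$ with rate $M^{-(2\ga-1)}$, as in Propositions \ref{prop:EulerSDE}, \ref{prop:ZEuler} and Theorem \ref{thm:cvgce-Vi-Wi}). The genuine gap is in your treatment of $V_i(\te)$. You propose to differentiate the indicator-based representation $f(t_i,y;\te)=\be\lc \1_{(Y_{t_i}(\te)>y)}\,H_{(1,\ldots,m)}(Y_{t_i}(\te))\rc$ inside the expectation, ``legitimate thanks to $L^p$-bounds on the weights''. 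This interchange fails: $\te\mapsto\1_{(Y_{t_i}(\te)>y)}$ is almost surely locally constant, so its pointwise $\te$-derivative drops the singular (Dirac-type) contribution, and no moment bound on the weights repairs that; ``re-applying the integration-by-parts lemma'' afterwards is not a rigorous substitute as stated. The paper avoids the problem by first performing $m$ \emph{additional} integrations by parts, i.e.\ by using the second identity in (\ref{eq:expr-density}), $f=\be\lc (Y_{t_i}(\te)-y)_+\,H_{(1,\ldots,m,1,\ldots,m)}(Y_{t_i}(\te))\rc$; since $x\mapsto(x-y)_+$ is Lipschitz, differentiation under the expectation in $\te$ is then standard and yields the two-term formula (\ref{eq:expr-V-i}), one term carrying $\nabla_l Y_{t_i}(\te)\,\1_{(Y_{t_i}(\te)>y)}$ with the doubled weight, the other carrying $(Y_{t_i}(\te)-y)_+\,\nabla_l H_{(1,\ldots,m,1,\ldots,m)}$. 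Your sketch never introduces the doubled weight, so part (i) for $V_i$ is not established as written (though it is repairable by exactly this device).

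A second, smaller discrepancy concerns the $N$--$M$ tradeoff in part (iii). You obtain $N\asymp M^{2(2\ga-1)}$ by equating the Monte Carlo error $N^{-1/2}$ with the Euler bias $M^{-(2\ga-1)}$. Proposition \ref{prop:MCvsEU}, to which the statement explicitly refers, is formulated differently: optimality is taken with respect to a fixed computing budget in the Duffie--Glynn spirit, the cost per replication $\tilde\ga M$ with $\tilde\ga=Tm(d+1)$ enters the computation, and the paper arrives at $N\asymp M^{\tilde\ga/(2\ga-1)-3}$. Your balancing rule is a legitimate alternative criterion, and the final rate $M^{-(2\ga-1)}$ for the approximated score (Proposition \ref{prop:scoreOrder}) is unaffected, but to prove the statement as it stands you would need either to adopt the budget-based formulation or to argue why your prescription coincides with it; at present it is a different relation between $N$ and $M$. (Note also that the ``$n^{-(2\ga-1)}$'' in the loose statement is the rate in the number of Euler steps $M$, as your own analysis makes clear.)
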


All those results are stated in a more rigorous way in the remainder of the article. 

\smallskip

Here is how our article is structured: we give some preliminaries and notations on Young and Malliavin calculus for fractional Brownian motion at Section \ref{sec:preliminaries}. The probabilistic representation for the log-likelihood is given at Section \ref{sec:exp-log-likelihood}. Discretization procedures are designed at Section \ref{sec:discret-likelihood}, and finally numerical examples are given at Section \ref{sec:num-examples}.

\subsection{Remaining open problems}
We emphasized above the fact that only a part of the problem at stake was going to be solved in the current article. We now briefly sketch  the remaining tasks to be treated.

\smallskip

The most important obstacle in order to fully justify our methodology is to get a suitable convergence theorem for $\ell_n(\te)/n$, where $\ell_n(\te)$ is defined by (\ref{eq:lln-rough}). In a natural way, this should be based on some strong ergodicity properties for $Y_t$. After a glance at the literature on ergodicity for fractional systems, one can distinguish two cases:

\smallskip

\noindent
\textit{(i)} When $\si(\cdot;\te)$ is constant, the convergence of $\cl(Y_t)$ as $t\to\infty$ is established in \cite{Ha}, with a (presumably non optimal) rate of convergence $t^{-1/8}$.

\smallskip

\noindent
\textit{(ii)} For a general smooth and elliptic coefficient $\si$, only the uniqueness of the invariant measure is shown in \cite{HO}, with an interesting extension to the hypoelliptic case in \cite{HP}. Nothing is known about the convergence of $\cl(Y_t)$, not to mention rates.

\smallskip

\noindent
This brief review already indicates that the convergence to invariant measures is still quite mysterious for fractional differential equations, at least for a non constant coefficient $\si$. Moreover, recall that if $\nu(\te)$ stands for the invariant measure corresponding to the system with coefficients $\mu(\cdot;\te),\si(\cdot;\te)$, we also wish to retrieve some information on the dependence $\te\mapsto\nu(\te)$ (See \cite{HM} for some partial results in this direction).

\smallskip

Let us mention another concrete problem: even in the case of a constant $\si$, the convergence of $\cl(Y_t)$ to an invariant measure $\nu(\te)$ is proven in \cite{Ha} in the total variation sense. In terms of the density $p(t,x;\te)$ of $Y_t$, it means that $p(t,\cdot;\te)$ converges to the density of $\nu$ in $L^1$ topology. However, in order to get a limit for $\ell_n(\te)/n$, one expects to use at least a convergence in some Sobolev space $W^{\al,p}$ for $\al,p$ large enough. 

\smallskip

One possibility in order to get this sharper convergence is to bound first the density $p(t,\cdot;\te)$ in another Sobolev space $W^{\al',p'}$ and then to use interpolation theory.  It seems thus sufficient to obtain Gaussian bounds on $p(t,\cdot;\te)$, uniformly in $t$. In case of Brownian diffusions, these Gaussian bounds are obtained by analytic tools, thanks to the Markov property. This method being obviously not available for systems driven by fBm, a possible inspiration is contained in the upper Gaussian bounds for the stochastic wave equation which can be found in \cite{DN}. The latter technical results stem from an intensive use of Malliavin calculus, which should also be invoked in our case, and notice the recent efforts~\cite{BO,BOT} in this direction.

\smallskip

Finally, let us mention that it seems possible to produce some reasonable convergent parametric estimators  for equations driven by fBm in a rather general context. Among the methods which can be adapted from the diffusion case with the current stochastic analysis techniques, let us mention the least square estimator of \cite{Ka}, as well as the local asymptotic normality property shown in \cite{Go}.
However, it seems obvious that the road to a complete picture of parameter estimation for stochastic equations driven by fBm is still hard and long. We hope to complete it in some subsequent communications.

\section{Preliminaries and notations}
\label{sec:preliminaries}

As mentioned in the introduction, we are concerned with equations driven by a $d$-dimensional fractional Brownian motion $B$. We recall here some basic facts about the way to solve those equations, and some Malliavin calculus tools which will be needed later on. Let us introduce first some general notation for H\"older type spaces:
\begin{notation}
We will denote by $\cac^{\al}(V)$ the set of $V$-valued $\al$-H\"older functions for any $\al\in(0,1)$, and by $\cac_{b}^{n}(U;V)$ the set of $n$ times differentiable functions, bounded together with all their derivatives, from $U$ to $V$. In the previous notation, $U$ and $V$ stand for two finite dimensional vector spaces. The state space $V$ can be omitted for notational sake when its value is non ambiguous. When we want to stress the fact that we are working on a finite interval $\ott$, we write $\cac_{T}^{\al}(V)$ for the space of $\al$-H\"older functions $f$ from $\ott$ to $V$. The corresponding H\"older norms shall be denoted by $\|f\|_{\al,T}$.
\end{notation}

\subsection{Differential equations driven by fBm}

Recall that the equation we are interested in is of the form (\ref{eq:sde}).  Before stating the assumptions on our coefficients we need an additional notation:
\begin{notation}\label{not:partialDER}
For $n,p\ge 1$, a function $f\in\cac^{p}(\R^{n};\R)$ and any tuple $(i_1,\ldots i_p)\in\{1,\ldots,d\}^{p}$, we set $\partial_{i_1\ldots i_p} f$ for $\frac{\partial^{p} f}{\partial x_{i_1}\ldots \partial x_{i_p}}$. Similarly, consider a function $g_{\theta}\in\cac^{p}(\tte;\R)$, for $n,p\ge 1$ and a vector of parameters $\theta \in \Theta \subset \R^{q} $. For any tuple $(i_1,\ldots i_p)\in\{1,\ldots,q\}^{p}$, we set $\nabla_{i_1\ldots i_p} g_{\theta}^{i}$ for $\frac{\partial^{p} g_{\theta}^{i}}{\partial \theta_{i_1}\ldots \partial \theta_{i_p}}$, where $i=1,\ldots, n$. 
\end{notation}
Using this notation, we work under the following set of assumptions:
\begin{hypothesis}\label{hyp:coeff-sde}
For any $\te\in\tte$, we assume that $\mu(\cdot;\te):\R^m\to\R^m$ and $\si(\cdot;\te):\R^m\to\R^{m,d}$ are $\cac_b^{2}$ coefficients. Furthermore, we have
\begin{equation*}
\sup_{\te\in\tte} \sum_{l=0}^{2} \sum_{1\le i_1,\ldots,i_l \le q} 
\|\nabla_{i_1 \cdots i_l}^{l}\mu(\cdot;\te)\|_{\infty} + \|\nabla_{i_1 \cdots i_l}^{l}\si(\cdot;\te)\|_{\infty}
 <\infty.
\end{equation*}
\end{hypothesis}

\smallskip

When equation (\ref{eq:sde}) is driven by a fBm with Hurst parameter $H>1/2$ it can be solved, thanks to a fixed point argument, with the stochastic integral interpreted in the  (pathwise) Young sense (see e.g. \cite{Gu}). Let us recall that Young's integral can be defined in the following way:
\begin{proposition}\label{prop:young-intg}
Let $f\in \cac^\ga$, $g\in \cac^\kappa$ with $\ga+\kappa>1$, and $0\le s\le t\le 1$. Then the  integral $\int_s^t g_\xi\; df_\xi$ is well-defined as limit of Riemann sums along partitions of $[s,t]$. Moreover, the following estimation is fulfilled:
\beq
\left|\int_s^t g_\xi\; df_\xi\right| \leq C \|f\|_\ga \|g\|_\kappa |t-s|^\ga,
\label{y}
\eeq
where the constant $C$ only depends on $\ga$ and $\kappa$. A sharper estimate is also available:
\begin{equation}\label{eq:ineq-young-sharp}
\left|\int_s^t g_\xi\; df_\xi\right| \leq |g_s| \, \|f\|_\ga |t-s|^\ga
+ c_{\ga,\ka}  \|f\|_\ga \|g\|_\kappa |t-s|^{\ga+\ka}.
\end{equation}
\end{proposition}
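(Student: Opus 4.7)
My plan is to carry out the classical Young argument based on iterative removal of partition points, which simultaneously establishes existence of the limit of Riemann sums and yields the sharper inequality~(\ref{eq:ineq-young-sharp}); the crude bound~(\ref{y}) will then follow as an immediate corollary.

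First, for a partition $\pi=\{s=t_0<t_1<\cdots<t_n=t\}$ of $[s,t]$, I set $S(\pi):=\sum_{i=0}^{n-1} g_{t_i}(f_{t_{i+1}}-f_{t_i})$. The core observation is that if $\pi'$ is obtained from $\pi$ by removing one interior point $t_i$, then
\[
S(\pi)-S(\pi') = (g_{t_i}-g_{t_{i-1}})(f_{t_{i+1}}-f_{t_i}),
\]
whose absolute value is bounded by $\|g\|_\kappa\|f\|_\ga(t_{i+1}-t_{i-1})^{\ga+\kappa}$ thanks to the H\"older property of $f$ and $g$ and the fact that $\ga,\kappa>0$.

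Second, I use a pigeon-hole step: for any partition $\pi$ with $n\ge 2$ interior segments, there exists an index $1\le i\le n-1$ such that $t_{i+1}-t_{i-1}\le \frac{2(t-s)}{n-1}$, by averaging the consecutive gap-sums. Starting from an arbitrary partition $\pi$ with $n+1$ points and iteratively removing the point provided by the pigeon-hole at each stage, I reduce $\pi$ to the trivial partition $\{s,t\}$, for which $S(\{s,t\})=g_s(f_t-f_s)$. Summing the errors I obtain
\[
\bigl|\,S(\pi)-g_s(f_t-f_s)\,\bigr| \leq \|g\|_\kappa\|f\|_\ga \sum_{k=2}^{n}\!\Bigl(\tfrac{2(t-s)}{k-1}\Bigr)^{\!\ga+\kappa}
\le c_{\ga,\ka}\,\|g\|_\kappa\|f\|_\ga(t-s)^{\ga+\kappa},
\]
where the series converges because $\ga+\ka>1$. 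This uniform bound, applied to the difference of Riemann sums over a refinement, together with a standard comparison between any two partitions via their common refinement, shows that $\{S(\pi)\}$ is Cauchy as the mesh shrinks, hence $\int_s^t g_\xi\,df_\xi$ is well-defined as the common limit. Passing to the limit in the displayed bound yields~(\ref{eq:ineq-young-sharp}) immediately, and then~(\ref{y}) follows by bounding $|g_s|$ by the supremum (or equivalent) part of $\|g\|_\kappa$ and absorbing the factor $(t-s)^\ka \le 1$ (recall $s,t\in[0,1]$).

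The one delicate point is the pigeon-hole/iteration bookkeeping: one must check that the constant $c_{\ga,\ka}$ really depends only on $\ga$ and $\ka$ (via $\sum_{k\ge 2}2^{\ga+\ka}k^{-(\ga+\ka)}=2^{\ga+\ka}(\zeta(\ga+\ka)-1)$) and that the same estimate applies uniformly to arbitrary refinements, which is what delivers the Cauchy property. Everything else is a routine triangle-inequality calculation, so I do not expect any serious obstacle beyond carefully tracking constants in this combinatorial step.
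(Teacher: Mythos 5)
Your argument is correct: it is the classical Young--Lo\`eve proof by successive removal of partition points with the pigeon-hole choice of the point to delete, and the bookkeeping you describe (per-subinterval application of the estimate plus comparison through a common refinement) does give both the Cauchy property and the sharper bound (\ref{eq:ineq-young-sharp}), from which (\ref{y}) follows as you say. The paper itself offers no proof of this proposition --- it is recalled as a standard fact from the Young/rough-path literature --- and your argument is precisely the canonical one; the only points worth tightening are the harmless off-by-one in the constant (the sum is $\sum_{j\ge 1} j^{-(\ga+\ka)}=\zeta(\ga+\ka)$, not $\zeta(\ga+\ka)-1$, after reindexing $k-1\mapsto j$) and the observation, which you already make implicitly, that (\ref{y}) forces $\|g\|_\ka$ to be the full H\"older norm including a supremum (or evaluation) term, since with the bare seminorm the right-hand side would vanish for constant $g$.
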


\smallskip

With this definition in mind and under assumptions \ref{hyp:coeff-sde}, we can solve our differential system of interest, and the following moments bounds are proven in \cite{FV,HN}:
\begin{proposition}\label{prop:moments-sdes}
Consider a fBm $B$ with Hurst parameter $H>1/2$. Then:

\smallskip

\noindent
(1) 
Under Hypothesis \ref{hyp:coeff-sde}, equation (\ref{eq:sde}) driven by $B$ admits a unique $\beta$-H\"older continuous solution $Y$, for any $\beta<H$.

\smallskip

\noindent
(2) 
Furthermore,
\begin{equation*}
\|Y\|_{T,\beta} \le  |a|+ c_{f,T} \|B\|_{\beta,T}^{1/\beta}.
\end{equation*}

\smallskip

\noindent
(3) If we denote by $Y^a$ the solution to (\ref{eq:sde}) with initial condition $a$, then
\begin{equation*}
\|Y^b-Y^a\|_{T,\beta} \le |b-a| \, \exp\lp c_{f,T} \|B\|_{\beta,T}^{1/\beta}\rp.
\end{equation*}

\smallskip

\noindent
(4) If we only assume that $f$ has linear growth, with $\nabla f,\nabla^2 f$ bounded, the following estimate holds true:
\begin{equation*}
sup_{t\in\ott} |Y_t| \le \lp 1+ |a|\rp \,  \exp\lp c_{f,T} \|B\|_{\beta,T}^{1/\beta}\rp.
\end{equation*}
\end{proposition}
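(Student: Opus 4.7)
The plan is to base everything on the sharp Young estimate \eqref{eq:ineq-young-sharp} of Proposition \ref{prop:young-intg}, combined with a partition of $\ott$ into subintervals of length $\Delta$ calibrated to the realization of $B$. For item~(1), I would set up the Picard map
$$
\Gamma(Z)_t \, := \, a + \iot \mu(Z_s;\te)\,ds + \sum_{l=1}^{d}\iot \si^l(Z_s;\te)\,dB_s^l
$$
on a ball of $\cac^\beta([0,\Delta];\R^m)$. Applying \eqref{eq:ineq-young-sharp} to the Young integrals and invoking Hypothesis~\ref{hyp:coeff-sde}, one gets estimates of the form $\|\Gamma(Z)\|_{\beta,[0,\Delta]} \le c_1 + c_2 \Delta^\beta \|Z\|_{\beta,[0,\Delta]} \|B\|_{\beta,T}$ plus a parallel bound for $\|\Gamma(Z^1)-\Gamma(Z^2)\|_{\beta,[0,\Delta]}$. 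Choosing $\Delta$ with $c_2\Delta^\beta\|B\|_{\beta,T}\le 1/2$, i.e. $\Delta\simeq\|B\|_{\beta,T}^{-1/\beta}$, turns $\Gamma$ into a contraction. Since this $\Delta$ depends only on $\|B\|_{\beta,T}$ and the $\cac_b^2$ norms of the coefficients, the same step size works on every subsequent subinterval, so local solutions can be patched into a global one.

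For item~(2), on each subinterval $I_k:=[k\Delta,(k+1)\Delta]$ the fixed-point argument yields $\|Y\|_{\beta,I_k}\le c$ for a universal constant depending only on $\mu,\si$. With $N=\lceil T/\Delta\rceil\simeq T\|B\|_{\beta,T}^{1/\beta}$, summing the increments $|Y_{(k+1)\Delta}-Y_{k\Delta}|\le c\Delta^\beta$ and combining with the local Hölder control delivers $\|Y\|_{\beta,T}\le|a|+c_{f,T}\|B\|_{\beta,T}^{1/\beta}$. For item~(3), set $Z:=Y^b-Y^a$ and apply \eqref{eq:ineq-young-sharp} to its integral equation. A mean-value argument using $\mu(\cdot;\te),\si(\cdot;\te)\in\cac_b^2$ gives
$$
\|Z\|_{\beta,I_k} \, \le \, c_1 |Z_{k\Delta}| + c_2 \Delta^\beta \|B\|_{\beta,T} \|Z\|_{\beta,I_k}.
$$
The same calibration of $\Delta$ absorbs the last term, leaving $\|Z\|_{\beta,I_k}\le 2c_1|Z_{k\Delta}|$ and hence $|Z_{(k+1)\Delta}|\le(1+2c_1\Delta^\beta)|Z_{k\Delta}|$. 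Iterating over $N$ subintervals yields
$$
|Z_T| \, \le \, (1+2c_1\Delta^\beta)^N \, |b-a| \, \le \, |b-a|\exp\lp c_{f,T}\|B\|_{\beta,T}^{1/\beta}\rp,
$$
together with a parallel estimate for the $\beta$-Hölder seminorm on all of $\ott$.

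Item~(4) follows the same Gronwall-type scheme, now applied directly to $Y$, using the linear-growth assumption on $\mu,\si$ in place of boundedness. One obtains $\sup_{I_k}|Y|\le(1+c\Delta^\beta\|B\|_{\beta,T})(1+\sup_{I_{k-1}}|Y|)$, which iterates into the announced exponential bound $\sup_{t\in\ott}|Y_t|\le(1+|a|)\exp(c_{f,T}\|B\|_{\beta,T}^{1/\beta})$.

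The main technical obstacle is the joint calibration of $\Delta$: it has to be small enough that the self-referential term $\|Y\|_\beta\|B\|_\beta\Delta^\beta$ produced by \eqref{eq:ineq-young-sharp} can be absorbed into the left-hand side, while at the same time matching the scale $\|B\|_{\beta,T}^{-1/\beta}$ precisely, so that the number of steps $N\simeq\|B\|_{\beta,T}^{1/\beta}$ reproduces the announced exponent $c_{f,T}\|B\|_{\beta,T}^{1/\beta}$ rather than an inflated $\|B\|_{\beta,T}^{2/\beta}$. Keeping the constants sharp across the iteration, so that they coalesce into the stated expression, is where the bookkeeping has to be done with care.
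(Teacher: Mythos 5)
The paper does not actually prove this proposition: it is quoted with the remark that ``the following moments bounds are proven in \cite{FV,HN}'', so there is no internal proof to compare against. Your sketch reconstructs precisely the standard argument of those references (and of \cite{NR}): a Picard fixed point in $\cac^\beta$ on subintervals of length $\Delta\simeq \|B\|_{\beta,T}^{-1/\beta}$, chosen so that the self-referential term produced by the sharp Young bound (\ref{eq:ineq-young-sharp}) is absorbed, followed by concatenation over $N\simeq T\|B\|_{\beta,T}^{1/\beta}$ steps to get the polynomial bound in (2) and the exponential bounds in (3)--(4). This is the right route and the calibration is the correct one.

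Two bookkeeping points deserve care. First, the local bound coming from the fixed point is not a universal constant: on $I_k$ one gets $\|Y\|_{\beta,I_k}\le c\lp 1+\|B\|_{\beta,T}\rp$ (the term $\|\si\|_\infty\|B\|_{\beta}$ cannot be removed); what is universal is the per-step increment $\Delta^\beta\|Y\|_{\beta,I_k}=O(1)$ thanks to the calibration, and that is what the concatenation actually needs, both for (2) and for controlling $\|Y^a\|_{\beta,I_k},\|Y^b\|_{\beta,I_k}$ in (3). Second, in (3) the prefactor of $|Z_{k\Delta}|$ carries a factor $\|B\|_{\beta,T}$ (it comes from $|\si(Y^b_{k\Delta})-\si(Y^a_{k\Delta})|\,\|B\|_\beta$ in (\ref{eq:ineq-young-sharp})), so the one-step growth factor is $1+c\,\|B\|_{\beta,T}\Delta^\beta\simeq 1+c$, a constant, rather than $1+c\Delta^\beta$; the exponential then arises from having $N\simeq T\|B\|_{\beta,T}^{1/\beta}$ such steps, which is exactly the stated exponent. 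With these corrections the argument closes as claimed, so the proposal is essentially sound and coincides with the proof in the cited literature.
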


\begin{remark}
The framework of fractional integrals is used in \cite{HN} in order to define integrals with respect to $B$. It is however easily seen to be equivalent to the Young setting we have chosen to work with.
\end{remark}

Some differential calculus rules for processes controlled by fBm will also be useful in the sequel:
\begin{proposition}\label{prop:diff-calc-rule}
Let $B$ be a $d$-dimensional fBm with Hurst parameter $H>1/2$. Consider $a,\hat a\in\R$, $b,\hat b\in\cac^{\al}_T(\R^d)$ with $\al+H>1$, and $c,\hat c\in\cac_T(\R)$ (all these assumptions are understood in the almost sure sense). Define two processes $z,\hat z$ on $\ott$ by 
\begin{equation*}
z_t=a+\sum_{j=1}^{d}\int_0^t b_u^{j}\, dB_u^{j} + \int_0^t c_u \, du, 
\quad\mbox{and}\quad
\hat z_t= \hat a+\sum_{j=1}^{d}\int_0^t \hat b_u^{j}\, dB_u^{j} + \int_0^t \hat c_u \, du.
\end{equation*}
Then for $t\in\ott$, one can decompose the product $z_t\hat z_t$ into
\begin{equation*}
z_t \, \hat z_t= a  \, \hat a + \sum_{j=1}^{n} \int_0^t \lc \hat z_u  \, b_u^{j}+ z_u  \, \hat b_u^{j}\rc \, dB_u^{j}
+ \int_0^t \lc z_u \, \hat c_u + \hat z_u c_u \rc\, du,
\end{equation*}
where all the integrals with respect to $B$ are understood in the Young sense.
\end{proposition}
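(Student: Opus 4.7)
The plan is to derive the product formula pathwise, by approximation with Riemann sums. First, thanks to Proposition \ref{prop:young-intg} applied to the Young integrals $\int_0^\cdot b_u^{j} \, dB_u^{j}$ and $\int_0^\cdot \hat b_u^{j} \, dB_u^{j}$ (whose H\"older exponents satisfy $\alpha + H > 1$), together with the continuity of $c, \hat c$ on the compact interval $\ott$, both $z$ and $\hat z$ are $\beta$-H\"older continuous for any $\beta < H$. Since $H > 1/2$ one may fix such a $\beta$ with $\beta > 1/2$, so that in particular $2\beta > 1$.

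Next, fix a partition $\pi = \{0=s_0 < s_1 < \cdots < s_N = t\}$ of $\ot$ with mesh $|\pi| \to 0$, and use the elementary telescoping identity
\begin{equation*}
z_t \hat z_t - a \hat a = \sum_{k=0}^{N-1} \Bigl[ z_{s_k}\bigl(\hat z_{s_{k+1}} - \hat z_{s_k}\bigr) + \hat z_{s_k}\bigl(z_{s_{k+1}} - z_{s_k}\bigr) + \bigl(z_{s_{k+1}} - z_{s_k}\bigr)\bigl(\hat z_{s_{k+1}} - \hat z_{s_k}\bigr) \Bigr].
\end{equation*}
The quadratic cross term is bounded in absolute value by $\|z\|_{\beta,T}\|\hat z\|_{\beta,T}\sum_k |s_{k+1}-s_k|^{2\beta} \leq \|z\|_{\beta,T}\|\hat z\|_{\beta,T}\, t\, |\pi|^{2\beta-1}$, and thus vanishes as $|\pi| \to 0$. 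By the very definition of the Young integral as a Riemann-sum limit, the first two sums converge, respectively, to $\int_0^t z_u \, d\hat z_u$ and $\int_0^t \hat z_u \, dz_u$; both integrals make sense as Young objects because $z, \hat z \in \cac_T^\beta$ with $\beta + \beta > 1$. At this stage one has obtained the abstract integration by parts $z_t \hat z_t = a \hat a + \int_0^t z_u \, d\hat z_u + \int_0^t \hat z_u \, dz_u$.

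Finally, we invoke the associativity (substitution) rule for Young integrals: if $\hat z_\cdot = \hat a + \sum_j \int_0^\cdot \hat b_u^{j}\, dB_u^{j} + \int_0^\cdot \hat c_u \, du$ and $z \in \cac_T^\beta$ with $\beta+\alpha > 1$ and $\beta > 1/2$, then
\begin{equation*}
\int_0^t z_u \, d\hat z_u = \sum_{j=1}^{d} \int_0^t z_u \, \hat b_u^{j}\, dB_u^{j} + \int_0^t z_u \, \hat c_u \, du,
\end{equation*}
and symmetrically for $\int_0^t \hat z_u \, dz_u$. Plugging into the previous display and grouping the terms against each $dB^j$ and $du$ yields the announced decomposition. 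The main obstacle is precisely this associativity rule, which is where the sharp Young estimate (\ref{eq:ineq-young-sharp}) enters: in the Riemann sum defining $\int_0^t z_u \, d\hat z_u$ one replaces each increment $\hat z_{s_{k+1}} - \hat z_{s_k}$ by $\sum_j \hat b_{s_k}^{j}(B_{s_{k+1}}^{j} - B_{s_k}^{j}) + \hat c_{s_k}(s_{k+1}-s_k)$, and one must verify that the resulting remainder is of order $|s_{k+1}-s_k|^{\alpha+\beta+H-1}$ per cell, which sums to an $o(1)$ quantity. This is a classical fact in the Young theory (see e.g. \cite{Gu,HN,FV}), so in practice it may simply be quoted rather than redone.
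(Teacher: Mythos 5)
Your argument is correct in outline, and it is worth noting that the paper itself gives no proof at all of this proposition: it simply declares the result ``elementary and classical'' and refers to \cite[Proposition~2.8]{LT}. So there is no in-paper argument to compare against; what you have written is precisely the standard pathwise proof that such a reference carries out, and your structure is sound: (a) both $z$ and $\hat z$ are $\beta$-H\"older for any $\beta<H$, so one may fix $\beta>1/2$; (b) the telescoped second-order term is $O(|\pi|^{2\beta-1})$ and vanishes; (c) the left-point Riemann sums converge to the Young integrals $\int_0^t z\,d\hat z$ and $\int_0^t \hat z\,dz$ since $2\beta>1$; (d) the substitution rule identifies $\int_0^t z\,d\hat z$ with $\sum_j\int_0^t z_u\hat b_u^j\,dB_u^j+\int_0^t z_u\hat c_u\,du$.

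The only point to tighten is the exponent bookkeeping in step (d). You claim a per-cell remainder of order $|s_{k+1}-s_k|^{\al+\beta+H-1}$; as stated this need not exceed $1$ (take $\al$ close to $1-H$ and note $\beta+H-1<2H-1$ can be small), so ``sums to $o(1)$'' would not follow from that exponent. The correct estimate is obtained directly from (\ref{eq:ineq-young-sharp}): fix $\ga<H$ with $\al+\ga>1$, and for each cell
\begin{equation*}
\Bigl| \int_{s_k}^{s_{k+1}} \hat b_u^{j}\, dB_u^{j} - \hat b_{s_k}^{j}\,\bigl(B^{j}_{s_{k+1}}-B^{j}_{s_k}\bigr) \Bigr|
\le c_{\ga,\al}\,\|B^{j}\|_{\ga}\,\|\hat b^{j}\|_{\al}\,|s_{k+1}-s_k|^{\al+\ga},
\end{equation*}
so after multiplying by $|z_{s_k}|\le\|z\|_{\infty}$ and summing, the total replacement error is $O(|\pi|^{\al+\ga-1})\to 0$; the drift replacement is handled by uniform continuity of $\hat c$, and the resulting sums $\sum_k z_{s_k}\hat b_{s_k}^{j}\,\delta B^{j}_{s_k s_{k+1}}$ converge to $\int_0^t z_u\hat b_u^{j}\,dB_u^{j}$ because $z\hat b^{j}\in\cac_T^{\min(\al,\beta)}$ and $\min(\al,\beta)+\ga>1$ for $\beta,\ga$ close enough to $H$. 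With this correction the proof is complete and self-contained, which is if anything more than the paper provides.
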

The proof of this elementary and classical result is omitted here. See \cite[Proposition~2.8]{LT} for the proof of a similar rule.

\subsection{Malliavin calculus techniques}
Our representation of the density for the solution to (\ref{eq:sde}) obviously relies on Malliavin calculus tools that we proceed now to recall. As already mentioned in the introduction, on a finite interval $\ott$ and for some fixed $H\in(1/2,1)$, we consider $(\oom,\cf,P)$ the canonical probability space associated with a fractional
Brownian motion with Hurst parameter $H$. That is,  $\oom=\cac_0(\ott;\R^d)$ is the Banach space of continuous functions
vanishing at $0$ equipped with the supremum norm, $\cf$ is the Borel sigma-algebra and $P$ is the unique probability
measure on $\oom$ such that the canonical process $B=\{B_t, \; t\in [0,T]\}$ is a $d$-dimensional fractional Brownian motion with Hurst
parameter $H$. Remind that this means that $B$ has $d$ independent coordinates, each one being a centered Gaussian process with covariance
$
R_H(t,s)=\frac 12 (s^{2H}+t^{2H}-|t-s|^{2H}).
$

\subsubsection{Functional spaces}
Let $\ce$ be the space of $d$-dimensional elementary functions on $\ott$:
\begin{multline}\label{eq:def-elem-fct}
\ce=\Big\{ f=(f_1,\ldots,f_d);\,\,f_j=\sum_{i=0}^{n_j-1} a_i^j
\1_{[t_i^j, t_{i+1}^j)}\,, \quad
0=t_0<t_1^j<\cdots<t_{n_j-1}^j<t_{n_j}^j=T,\\
\text{ for }j=1,\ldots,d\Big\}\,.
\end{multline}
We call $\ch$ the completion of $\ce$ with respect to the semi-inner product
\[
\lla f,\, g\rra_{\ch}=\sum_{i=1}^{d} \lla f_{i},\, g_{i}\rra_{\ch_{0}},
\quad\mbox{where}\quad
\langle \1_{[0,t]}, \1_{[0,s]} \rangle_{\ch_{0}} := R(s,t), \quad s,t \in [0,T].
\]
Then, one constructs an isometry $K^*_H: \ch \rightarrow  L^2([0,1];\R^d)$  such that 
$$
K^*_H\lp \1_{[0,t_{1}]},\ldots,\1_{[0,t_{d}]}\rp 
= \lp \1_{[0,t_1]} K_H(t_1,\cdot),\ldots, \1_{[0,t_d]} K_H(t_d,\cdot)\rp,
$$ 
where the kernel $K_H$ is given by 
\[
K_H(t,s)= c_H s^{\frac 12 -H} \int_s^t (u-s)^{H-\frac 32} u^{H-\frac 12} \, du
\]
and verifies that $R_H(t,s)= \int_0^{s\land t} K_H(t,r) K_H(s,r)\, dr$, for some constant $c_H$. Moreover, let us observe that $K^*_H$ can be represented 
in the following form: for $\vp=(\vp_1,\ldots,\vp_d)\in\ch$, we have $K^*_H \vp$
\[
K^*_H \vp=\lp  K^*_H \vp^1,\ldots,K^*_H\vp^d \rp,
\quad\mbox{where}\quad
[K^*_H \vp^i]_t = \int_t^1 \vp_r^i \partial_r K_H(r,t) \, dr.
\]

\subsubsection{Malliavin derivatives}

Let us start by defining the Wiener integral with respect to $B$: for any element $f$ in $\ce$ whose expression is given as in (\ref{eq:def-elem-fct}), we define the Wiener integral of $f$ with respect to $B$ as
\[
B(f):=\sum_{j=1}^d\sum_{i=0}^{n_j-1} a_i^j (B_{t_{i+1}^j}^{j}
-B_{t_i^j}^{j})\,.
\]
We also denote this integral as $ \int_0^T f_{t} dB_t$, since it coincides with a pathwise integral with respect to $B$.

\smallskip

For $\theta:\R\rightarrow \R$, and $j\in\{1,\ldots,d\}$, denote by
$\theta^{[j]}$ the function with values in $\R^d$ having all the
coordinates equal to zero except the $j$-th coordinate that equals
to $\theta$. It is readily seen that
$$
\be\lc B\lp \1_{[0,s)}^{[j]}\rp \, B\lp \1_{[0,t)}^{[k]}\rp \rc
=\delta_{j,k}R_{s,t}.
$$
This definition can be extended by linearity and closure to elements of $\ch$, and we obtain the relation
$$
\be\lc B(f) \, B(g)\rc =\langle f,g\rangle_{\ch},
$$
valid for any couple of elements $f,g\in\ch$. In particular, $B(\cdot)$ defines an isometric map from $\ch$  into a subspace of $L^2(\Omega)$.

\smallskip

We can now proceed to the definition of Malliavin derivatives. With this notation \ref{not:partialDER} in hand, let us consider $\cs$ be the 
family of smooth functionals $F$ of the form
\begin{equation}\label{eq:def-smooth-fct}
F=f(B(h_1),\dots,B(h_n)),
\end{equation}
where $h_1,\dots,h_n\in \ch$, $n\geq 1$, and $f$ is a smooth function with polynomial growth, together with all its derivatives. Then, the Malliavin derivative of such a functional $F$ is the $\ch$-valued random variable defined by
\[
D F= \sum_{i=1}^n \partial_{i} f(B(h_1),\dots,B(h_n)) \, h_i.
\]
For all $p>1$, it is known that the operator $D$ is closable from $L^p(\oom)$ into $L^p(\oom; \ch)$ (see e.g. \cite[Section 1]{N-bk}).
We will still denote by $D$ the closure of this operator, whose domain is usually denoted by $\D^{1,p}$ and is defined 
as the completion of $\cs$ with respect to the norm
\[
\|F\|_{1,p}:= \left( E(|F|^p) + E( \|D F\|_\ch^p ) \right)^{\frac 1p}.
\]
It should also be noticed that partial Malliavin derivatives with respect to each component $B^{j}$ of $B$ will be invoked: they are defined, for a functional $F$ of the form (\ref{eq:def-smooth-fct}) and $j=1,\dots,d$, as
\begin{equation*}
D^j F=\sum_{i=1}^n  \partial_{i} f(B(h_1),\dots,B(h_n)) h_i^{[j]},
\end{equation*}
and then extended by closure arguments again. We refer to \cite[Section 1]{N-bk} for the definition of higher derivatives and Sobolev spaces $\D^{k,p}$ for $k>1$. Another essential object related to those derivatives is the so-called Malliavin matrix of a $\R^m$-valued random variable $F\in\D^{1,2}$, defined by
\begin{equation}\label{eq:def-mall-matrix}
\ga_{F}=
\biggl (\Bigl \langle DF^{i}, DF^{j} \Bigr \rangle \biggr)_{1\leq i,j \leq m}.
\end{equation}

\subsubsection{Skorohod integrals}
\label{sec:sko-integrals}

We will denote by $\delta$ the adjoint of the operator $D$ (also referred to as the {\it divergence operator}). This operator  is closed and its domain, denoted by
$\dom(\delta)$, is the set of $\ch$-valued
square integrable random variables $u\in L^2(\Omega;\ch)$ such that
$$
|\be\lc\langle D F,u\rangle_{\ch}\rc |\le C\,\|F\|_2,
$$
for all $F\in\D^{1,2}$, where $C$ is some constant depending on
$u$. Moreover, for $u\in\dom(\delta)$,  $\delta(u)$ is the element of $L^2(\Omega)$ characterized by the duality relationship:
\begin{equation}\label{duality}
\be\lc F\delta(u)\rc=\be\lc\langle D F,u\rangle_{\ch}\rc,
\quad\mbox{for any}\quad F\in\D^{1,2}.
\end{equation}
The quantity $\delta(u)$ is usually called \emph{Skorohod integral} of the process $u$.

\smallskip

Skorohod integrals are obviously analytic objects, not suitable for easy numerical implementations. However, they can be related to the Young type integrals introduced at Proposition \ref{prop:young-intg}. For this, we need to define another functional space as follows:
\begin{notation}
We call $|\ch|$ the space of measurable functions $\vp:[0,T]\rightarrow \R^d$ such that
\[
\|\vp\|^2_{|\ch|}:= c_H \int_0^1 \int_0^1 |\vp_r| |\vp_u| |r-u|^{2H-2} dr du <+\infty,
\]
where $c_H=H(2H-1)$, and we denote by $\langle \cdot,\cdot\rangle_{|\ch|}$ the associated inner product. We also write $\D^{k,p}(|\ch|)$ for the space of $\D^{k,p}$ functionals with values in $|\ch|$.
\end{notation}
The following proposition is then a slight extension of \cite[Proposition 5.2.3]{N-bk}:
\begin{proposition}\label{prop:dom-strato}
Let $\{u_t^{ij},\; t\in [0,1]\}$, for $i=1,\ldots,m$ and $j=1,\ldots,d$,  be a stochastic process in $\D^{1,2}(|\ch|)$ such that
\beq
\sum_{j=1}^{d}\int_0^1 \int_0^1 |D_s^{j} u_t^{ij}| \, |t-s|^{2H-2} ds dt <+\infty \quad a.s.
\label{eq:an}
\eeq
We also assume that almost surely, $u$ has $\beta$-H\"older paths with $\beta+H>1$.
Then the Young integral $\sum_{j=1}^{d}\int_0^T  u_t^{ij} \, dB_t^{j}$ exists and for all $i=1,\ldots,m$ can be written as
\[
\sum_{j=1}^{d}\int_0^T u_t^{ij} \, dB_t^{j} = \delta(u^{i}) + \sum_{j=1}^{d} \int_0^T \int_0^T D_s^{j} u_t^{ij} |t-s|^{2H-2} ds dt,
\]
where $\delta(u)$ stands for the Skorohod integral of $u$.
\end{proposition}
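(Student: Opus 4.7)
The plan is to verify the identity first on a dense class of piecewise-constant processes, where it reduces to the standard Malliavin product rule, and then to pass to the limit using the three standing hypotheses of the proposition. By orthogonality of the components $B^{1},\ldots,B^{d}$ of the fBm and by linearity of the three objects involved (Young integral, Skorohod integral, correction integral), it suffices to fix $i$ and treat one component $j$ at a time. I therefore work below with a scalar process $u=u^{ij}$ and a one-dimensional fBm $B=B^{j}$.

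First I fix a sequence of partitions $\pi^{n}:0=t_{0}^{n}<\cdots<t_{N_{n}}^{n}=T$ with $|\pi^{n}|\to 0$ and introduce the piecewise-constant process $u^{n}_{t}:=\sum_{k} u_{t_{k}^{n}}\,\1_{[t_{k}^{n},t_{k+1}^{n})}(t)$. Since each $u_{t_{k}^{n}}\in\D^{1,2}$, the elementary product rule $F\,B(h)=\delta(Fh)+\langle DF,h\rangle_{\ch}$ applied with $F=u_{t_{k}^{n}}$ and $h=\1_{[t_{k}^{n},t_{k+1}^{n})}$, combined with the representation $\langle f,g\rangle_{\ch}=c_{H}\int_{0}^{T}\!\int_{0}^{T} f_{r}g_{s}|r-s|^{2H-2}\,dr\,ds$ valid for smooth enough $f,g$, gives after summation
\begin{equation*}
\sum_{k} u_{t_{k}^{n}}\bigl(B_{t_{k+1}^{n}}-B_{t_{k}^{n}}\bigr)
=\delta(u^{n})+c_{H}\int_{0}^{T}\!\int_{0}^{T} D_{s}u_{\phi_{n}(t)}\,|t-s|^{2H-2}\,dt\,ds,
\end{equation*}
where $\phi_{n}(t)=t_{k}^{n}$ on $[t_{k}^{n},t_{k+1}^{n})$.

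Next I let $n\to\infty$. The left-hand side is a Riemann-sum approximation of the Young integral, so Proposition \ref{prop:young-intg} applied to the $\beta$-H\"older pair $(u,B)$ with $\beta+H>1$ identifies its limit with $\int_{0}^{T}u_{t}\,dB_{t}$. For the Skorohod term, the H\"older regularity of $u$ together with the finiteness of $\int_{0}^{T}\!\int_{0}^{T}|r-s|^{2H-2}dr\,ds$ (valid for $H>1/2$) controls the convergence $u^{n}\to u$ in $|\ch|$ pathwise, while the pointwise limit $D_{s}u_{\phi_{n}(t)}\to D_{s}u_{t}$ dominated by (\ref{eq:an}) controls the convergence of the Malliavin derivatives. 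Combining these gives $u^{n}\to u$ in $\D^{1,2}(|\ch|)$, so the continuity of $\delta$ on this space implies $\delta(u^{n})\to\delta(u)$ in $L^{2}(\oom)$. The correction term on the right-hand side converges to $c_{H}\int_{0}^{T}\!\int_{0}^{T} D_{s}u_{t}|t-s|^{2H-2}\,dt\,ds$ by dominated convergence, again with (\ref{eq:an}) as dominating function.

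The main difficulty will be the convergence $u^{n}\to u$ in the Sobolev-type space $\D^{1,2}(|\ch|)$, since the norm on $|\ch|$ is a singularly weighted $L^{2}$ norm in $(t,s)$ and one needs enough joint regularity of $(t,s)\mapsto D_{s}u_{t}$ for dominated convergence to apply with majorant (\ref{eq:an}). The hypotheses of the proposition, namely membership in $\D^{1,2}(|\ch|)$, the integrability condition (\ref{eq:an}), and the H\"older regularity $\beta+H>1$, are chosen precisely to allow the three convergences above to go through simultaneously.
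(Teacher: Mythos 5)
Your overall strategy -- discretize $u$, apply the product rule $F\,B(h)=\delta(Fh)+\lla DF,h\rra_{\ch}$ term by term, and pass to the limit -- is exactly the argument behind the result the paper invokes here: the paper gives no proof of its own, it simply states the proposition as a slight extension of \cite[Proposition 5.2.3]{N-bk}, and that reference proceeds by the same Riemann-sum decomposition. So the route is the right one. The genuine gap is in your final limit passage for the Skorohod term. The hypotheses of the proposition are purely almost-sure: (\ref{eq:an}) holds a.s.\ and the $\beta$-H\"older regularity is pathwise. From these you cannot conclude that $u^{n}\to u$ in $\D^{1,2}(|\ch|)$, because that is a mean-square statement ($\be\|u^{n}-u\|_{|\ch|}^{2}+\be\|D(u^{n}-u)\|^{2}\to 0$); dominated convergence with an a.s.\ finite majorant yields only pathwise convergence, and without uniform integrability (moment bounds on $\sup_{t}|u_{t}|$, on $\|u\|_{\beta}$ and on the integral in (\ref{eq:an}), none of which are assumed) you cannot invoke the boundedness of $\delta:\D^{1,2}(\ch)\to L^{2}(\oom)$ to get $\delta(u^{n})\to\delta(u)$ in $L^{2}(\oom)$. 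What the hypotheses actually give is that the Riemann sums and the trace term converge almost surely, hence $\delta(u^{n})$ converges almost surely; the remaining -- and genuinely nontrivial -- step is to identify that a.s.\ limit with $\delta(u)$, which requires either strengthening the assumptions to expectations or a localization argument exploiting the local character of $D$ and $\delta$ (work on sets where the pathwise quantities are bounded by $K$, identify the limit there through the closedness of $\delta$, then let $K\to\infty$). Your closing paragraph flags this as the main difficulty but then asserts the hypotheses were chosen so that it goes through, which, as written, they do not.

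A secondary point: for the correction term, (\ref{eq:an}) does not literally dominate the shifted integrand $|D_{s}u_{\phi_{n}(t)}|\,|t-s|^{2H-2}$, since the time argument of the derivative has been moved to the left endpoint of the partition interval. To run dominated convergence you need, in addition, some a.s.\ regularity of $(s,t)\mapsto D_{s}u_{t}$ and a majorant of the type $\int_{0}^{T}\int_{0}^{T}\sup_{r}|D_{s}u_{r}|\,|t-s|^{2H-2}\,ds\,dt<\infty$ (or H\"older-in-$t$ control of $D_{s}u_{t}$). These properties do hold for the Malliavin-derivative and Jacobian-type processes to which the proposition is applied later in the paper, but they are not consequences of the stated hypotheses alone, so a complete proof should either assume them or verify them where the proposition is used.
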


\section{Probabilistic expression for the log-likelihood}
\label{sec:exp-log-likelihood}

Recall that we are focusing on equation (\ref{eq:sde}) driven by a $d$-dimensional fBm $B$, and that we have chosen to use expression (\ref{eq:lln-rough}) as a substitute to the log-likelihood function. We have thus reduced the initial maximization problem to the solution of $\nabla_{l}\ell_{n} (\theta) = 0 $. This will be performed numerically by means of a root approximation algorithm. 

\smallskip

Observe first that in order to define (\ref{eq:lln-rough}), the density of $Y_t(\te)$ must exist for any $t>0$. Let us thus recall the classical setting (given in \cite{HN}) under which $Y_t$ admits a smooth density:
\begin{hypothesis}\label{hyp:coeff-sde2}
Let $\mu$ and $\si$ be coefficients satisfying Hypothesis \ref{hyp:coeff-sde}. For $\xi\in\R^m$ and $\te\in\tte$, set $\al(\xi)=\si(\xi,\te) \si^*(\xi,\te)$. Then we assume that

\smallskip

\noindent
\emph{(i)} For any $k\ge 0$ and $j_1,\ldots,j_k\in\{1,\ldots,m\}$ we have
\begin{equation*}
\sup_{\te\in\tte} \sum_{l=0}^{2} \sum_{1\le p_1,\ldots,p_l \le q} 
\|\nabla_{p_1 \cdots p_l}^{l} \partial_{j_1,\ldots,j_k}^{k} \mu(\cdot;\te)\|_{\infty} 
+ \|\nabla_{p_1 \cdots p_l}^{l} \partial_{j_1,\ldots,j_k}^{k} \si(\cdot;\te)\|_{\infty}
 \le c_{k},
\end{equation*}
for a strictly positive constant $c_{k}$.

\smallskip

\noindent
\emph{(ii)} 
There exists a strictly positive constant $\ep$ such that $\langle\al(\xi;\,\te)\eta,\, \eta\rangle_{\R^m}\ge \ep |\eta|^2_{\R^m}$ for any couple of vectors $\eta,\xi\in\R^m$, uniformly in $\te\in\tte$. 
\end{hypothesis}
Then the density result for $Y_t$ can be read as follows:
\begin{theorem}
Consider the stochastic differential equation (\ref{eq:sde}) with initial condition $a\in\R^m$. Assume Hypothesis \ref{hyp:coeff-sde2} is satisfied. Then, for any $t>0$ and $\te\in\tte$, the law of $Y_t(\te)$ admits a $\cac^\infty$ density, denoted by $f(t,\cdot;\,\te)$, with respect to Lebesgue's measure.
\end{theorem}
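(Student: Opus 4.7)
The plan is to apply the classical criterion for smoothness of densities from Malliavin calculus (cf.\ \cite[Chapter~2]{N-bk} in the Brownian setting, adapted to fBm): it suffices to establish that $Y_t(\te)\in\D^{\infty}(\R^m):=\bigcap_{k,p\ge 1}\D^{k,p}$ and that its Malliavin matrix $\ga_{Y_t}$ defined by (\ref{eq:def-mall-matrix}) is almost surely invertible with $(\det \ga_{Y_t})^{-1}\in L^p(\oom)$ for every $p\ge 1$. The smoothness of the density $f(t,\cdot;\te)$ then follows from the classical integration by parts formula.

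The first step is to show $Y_t\in\D^{\infty}$. Formally differentiating equation (\ref{eq:sde}) in the Malliavin sense, for $s<t$ and $j\in\{1,\ldots,d\}$ one obtains the linear equation
\begin{equation*}
D_s^{j} Y_t = \si^j(Y_s;\te) + \int_s^t \partial\mu(Y_r;\te)\, D_s^{j} Y_r \, dr
+ \sum_{l=1}^{d} \int_s^t \partial\si^l(Y_r;\te)\, D_s^{j} Y_r \, dB_r^{l}.
\end{equation*}
Introducing the Jacobian $J_t=\partial_a Y_t^a$, which solves a similar linear equation, one gets the representation $D_s^{j} Y_t = J_t J_s^{-1} \si^j(Y_s;\te)$. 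Hypothesis \ref{hyp:coeff-sde2}(i) provides uniform bounds on all derivatives of $\mu$ and $\si$, so Proposition \ref{prop:moments-sdes} applied to the systems satisfied by $Y$, $J$ and $J^{-1}$ yields $L^p$ moments of all orders for $\|Y\|_{\beta,T}$, $\|J\|_{\beta,T}$ and $\|J^{-1}\|_{\beta,T}$. Iterating this procedure to higher-order Malliavin derivatives (each $D^{(k)}Y$ solves a linear equation whose coefficients depend on the lower-order derivatives) one obtains $Y_t\in\D^{k,p}$ for every $k,p$.

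The second, and main, step is the non-degeneracy of $\ga_{Y_t}$. Using the representation above together with the inner product $\langle\cdot,\cdot\rangle_{|\ch|}$, one can factor
\begin{equation*}
\ga_{Y_t} = J_t \, C_t \, J_t^{*},
\quad \text{where} \quad
C_t = c_H \int_0^t\!\!\int_0^t J_r^{-1}\si(Y_r;\te) \, \si^{*}(Y_u;\te) (J_u^{-1})^{*} |r-u|^{2H-2}\, dr\, du.
\end{equation*}
Since $J_t$ and $J_t^{-1}$ have moments of all orders, it suffices to prove $(\det C_t)^{-1}\in L^p$ for all $p$. By the uniform ellipticity assumption \ref{hyp:coeff-sde2}(ii), $\si\si^{*}(\xi;\te)\ge \ep\, \id$, and together with the continuity of $r\mapsto J_r^{-1}\si(Y_r;\te)$ this allows one to bound from below the quadratic form $\langle C_t\eta,\eta\rangle$ by $\ep'$ times a fractional-type quadratic form in a neighbourhood of $r=0$. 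The key estimate, which is the main technical obstacle and is carried out in \cite{HN} via an analysis of small-ball probabilities for Young integrals against fBm, gives the required bound $\bp(\det C_t < \varepsilon) \le c_p\, \varepsilon^p$ for every $p\ge 1$. This is where the fBm case genuinely differs from the Brownian one: there is no Markov property nor Norris-type lemma available, and one must instead exploit the Gaussian nature of $B$ together with the regularity of the trajectories.

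With $Y_t\in\D^{\infty}$ and $(\det\ga_{Y_t})^{-1}\in\bigcap_p L^p$, the standard Malliavin criterion yields the existence of a $\cac^{\infty}$ density $f(t,\cdot;\te)$ of the law of $Y_t(\te)$, completing the proof. The whole argument being independent of $\te\in\tte$ (thanks to the uniformity in Hypothesis \ref{hyp:coeff-sde2}), the construction is compatible with a later analysis of $\te\mapsto f(t,\cdot;\te)$.
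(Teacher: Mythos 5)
The paper does not actually prove this statement: it is quoted as ``the classical setting (given in \cite{HN})'', so there is no internal proof to compare you against. Your outline reproduces the standard argument of \cite{HN,NS}, which is exactly the route the paper is implicitly relying on: $\D^{k,p}$-regularity of $Y_t$ via the linear (Young) equations satisfied by the iterated Malliavin derivatives together with the moment bounds of Proposition \ref{prop:moments-sdes}, the representation $D_s^{j}Y_t=J_tJ_s^{-1}\si^{j}(Y_s;\te)$ and the factorization $\ga_{Y_t}=J_t C_t J_t^{*}$, then negative moments of $\det C_t$ from ellipticity, and finally the classical Malliavin criterion. Two minor remarks: the crucial integrability of $(\det\ga_{Y_t})^{-1}$ is invoked rather than established, which is acceptable here only because the paper itself defers the whole theorem to the literature; and for $H>1/2$ under ellipticity this step is in fact more elementary than your phrasing suggests --- one uses the lower bound $\|f\|_{\ch}^{2}\ge b_H\|f\|_{L^{1/H}}^{2}$ together with the continuity of $s\mapsto J_tJ_s^{-1}\si(Y_s;\te)$ (which is nondegenerate at every $s$, e.g.\ near $s=t$), rather than a genuine small-ball analysis, which is what makes the regular case $H>1/2$ tractable compared to $H<1/2$.
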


\smallskip

In the sequel, we shall suppose that the density $f(t,\cdot;\,\te)$ exists without further mention, the aim of this section being to produce a probabilistic representation of $f(t,\cdot;\,\te)$ for computational purposes. To this aim, we shall first give the equations governing the Malliavin derivatives of the processes $Y(\te)$ and $\nabla Y(\te)$, and then use a stochastic analysis formula in order to represent our log-likelihood. We separate these tasks in two different subsections.

\subsection{Some Malliavin derivatives}
This section is devoted to a series of preliminary lemmas which will enable to formulate our probabilistic representation of $f(t,\cdot;\,\te)$. Let us first introduce a notation which will prevail until the end of the paper:
\begin{notation}
For a set of indices or coordinates $(k_1,\ldots,k_r)$ of length $r\ge 1$ and $1\le j\le r$, we denote by $(k_1,\ldots,\check{k}_j,\ldots,k_r)$ the set of indices or coordinates of length $r-1$ where $k_j$ has been omitted.
\end{notation}
We now give a general expression for the higher order derivatives of $Y_t$, borrowed from~\cite{NS}.
\begin{lemma}\label{lem:higher-mall-deriv-Y}
Assume Hypothesis \ref{hyp:coeff-sde} and \ref{hyp:coeff-sde2} hold true. For $n\ge 1$ and $(i_{1},\ldots,i_{n})\in\{1,\ldots,d\}^{n}$, denote by $D^{i_{1},\ldots,i_{n}}Y_{t}^{i}(\theta)$ the $n\textsuperscript{th}$ Malliavin derivative of $Y_{t}^{i}(\theta)$ with respect to the coordinates $B^{i_1},\ldots,B^{i_n}$ of $B$. Then $D^{i_{1},\ldots,i_{n}}Y_{t}^{i}(\theta)$, considered as an element of $\ch^{\otimes n}$, satisfies  the following linear equation: for $t \geq r_{1}\vee \cdots \vee r_{n}$,
\begin{multline}\label{eq:higher-deriv-Y-t}
D_{r_{1},\ldots,r_{n}}^{i_{1},\ldots,i_{n}} Y_{t}^{i}(\theta) = 
\sum_{p=1}^{n} \alpha^{i}_{i_p,i_{1}\ldots,\check{\imath}_p,\ldots, i_{n}}(r_{p};r_{1},\ldots,\check{r}_{p},\ldots,r_{n};\theta)\\
 + \int_{r_{1}\vee \cdots \vee r_{n}}^{t} \beta^{i}_{i_{1},\ldots,i_{n}} (s;r_{1},\ldots,r_{n};\theta) \;ds
+ \sum_{l=1}^{d} \int_{r_{1}\vee \cdots \vee r_{n}}^{t} \alpha^{i}_{l,i_{1},\ldots,i_{n}} (s;r_{1},\ldots,r_{n};\theta) \;dB_{s}^{l},
\end{multline}
where
\begin{eqnarray*}
\alpha^{i}_{j,i_{1},\ldots,i_{n}} (s;r_{1},\ldots,r_{n};\theta) &=&
\sum \sum_{k_{1},\ldots,k_{\nu}=1 }^{m} 
\partial_{k_{1}\ldots k_{\nu}}^{\nu}  \sigma^{ij}(Y_{s}(\theta);\theta) \; 
D_{r(I_{1})}^{i(I_{1})} Y_{s}^{k_{1}}(\theta) \ldots D_{r(I_{\nu})}^{i(I_{\nu})} Y_{s}^{k_{\nu}}(\theta)\\
\beta^{i}_{i_{1},\ldots,i_{n}} (s;r_{1},\ldots,r_{n};\theta) &=& 
\sum \sum_{k_{1},\ldots,k_{\nu}=1 }^{m} 
\partial_{k_{1}\ldots k_{\nu}}^{\nu}  \mu^{i} (Y_{s}(\theta);\theta )\; 
D_{r(I_{1})}^{i(I_{1})} Y_{s}^{k_{1}}(\theta) \ldots D_{r(I_{\nu})}^{i(I_{\nu})} Y_{s}^{k_{\nu}}(\theta).
\end{eqnarray*}
In the expressions above, the first sums are extended to the set of all partitions $I_{1}, \ldots, I_{\nu}$ of $\{1,\ldots,n\}$ and for any subset $K=\{i_{1},\ldots,i_{\eta}\}$ of $\{1,\ldots,n\}$ we set $D_{r(K)}^{i(K)}$ for the derivative operator $D_{r_{1},\ldots,r_{\eta}}^{i_{1},\ldots,i_{\eta}} $. Notice that $D_{r_{1},\ldots,r_{n}}^{i_{1},\ldots,i_{n}} Y_{t}^{i}(\theta) =0$ whenever $t < r_{1}\vee \cdots \vee r_{n}$.
\end{lemma}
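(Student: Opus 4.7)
The plan is to prove the formula by induction on the order $n$ of differentiation, exploiting the fact that Young integrals against fBm behave well under Malliavin differentiation (which is justified by Proposition \ref{prop:moments-sdes} together with the chain rule for $D$ applied to functionals that are Fréchet differentiable in $B$).

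For the base case $n=1$, I would apply $D_{r_1}^{i_1}$ directly to both sides of (\ref{eq:sde}). Using that $D_{r_1}^{i_1} B_s^l = \delta_{l,i_1}\mathbf{1}_{[0,s]}(r_1)$ and the chain rule for Malliavin derivatives of smooth functionals of $Y$, one gets, for $t \ge r_1$,
\begin{equation*}
D_{r_1}^{i_1} Y_t^i(\te)
= \sigma^{i,i_1}(Y_{r_1}(\te);\te) + \int_{r_1}^t \partial_k \mu^i(Y_s(\te);\te)\, D_{r_1}^{i_1} Y_s^k(\te)\, ds + \sum_{l=1}^{d}\int_{r_1}^t \partial_k \sigma^{i,l}(Y_s(\te);\te)\, D_{r_1}^{i_1} Y_s^k(\te)\, dB_s^l,
\end{equation*}
which matches the claim with the only partition being $I_1=\{1\}$: the initial term is $\alpha^i_{i_1}(r_1;\te)=\sigma^{i,i_1}(Y_{r_1};\te)$, and the drift/diffusion coefficients correspond to the partition of $\{1\}$ consisting of the single block $\{1\}$.

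For the induction step, I would assume the representation holds at order $n-1$ and apply $D_{r_n}^{i_n}$ to equation (\ref{eq:higher-deriv-Y-t}) written at order $n-1$. Two ingredients are needed. First, exchanging $D_{r_n}^{i_n}$ with the Young integral against $dB^l$ produces a boundary contribution at $s=r_n$ (when $r_n > r_1\vee\cdots\vee r_{n-1}$) equal to $\alpha^i_{i_n,i_1,\ldots,i_{n-1}}(r_n;r_1,\ldots,r_{n-1};\te)$, plus an integral of $D_{r_n}^{i_n}\alpha^i_{l,i_1,\ldots,i_{n-1}}$ against $dB^l$; symmetrization over which of the $n$ time-variables is the largest then yields all $n$ initial terms $\alpha^i_{i_p,\,i_1,\ldots,\check\imath_p,\ldots,i_n}$ in (\ref{eq:higher-deriv-Y-t}). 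Second, one must compute $D_{r_n}^{i_n}$ applied to the typical inductive summand
\begin{equation*}
\partial_{k_1\ldots k_\nu}^{\nu} \sigma^{ij}(Y_s;\te)\; D_{r(I_1)}^{i(I_1)} Y_s^{k_1}\cdots D_{r(I_\nu)}^{i(I_\nu)} Y_s^{k_\nu},
\end{equation*}
and similarly for $\mu$. By the Leibniz rule this splits into $\nu+1$ contributions: either the derivative hits $\partial^\nu_{k_1\ldots k_\nu}\sigma^{ij}(Y_s;\te)$ (producing a factor $\partial^{\nu+1}_{k_1\ldots k_\nu k_{\nu+1}}\sigma^{ij}(Y_s;\te)\,D_{r_n}^{i_n}Y_s^{k_{\nu+1}}$, i.e.\ enlarging the partition by the singleton $\{n\}$), or it hits one of the factors $D_{r(I_j)}^{i(I_j)} Y_s^{k_j}$, turning it into $D_{r(I_j\cup\{n\})}^{i(I_j\cup\{n\})} Y_s^{k_j}$ (i.e.\ adjoining $n$ to the block $I_j$). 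This is exactly the recursion generating all partitions of $\{1,\ldots,n\}$ from those of $\{1,\ldots,n-1\}$, so summing reconstructs the coefficients $\alpha^i_{l,i_1,\ldots,i_n}$ and $\beta^i_{i_1,\ldots,i_n}$ appearing in the statement.

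The main obstacle is not the analytic one (exchanging $D$ with the Young integral is standard under the regularity assumed in Hypotheses \ref{hyp:coeff-sde}--\ref{hyp:coeff-sde2} and the moment bounds of Proposition \ref{prop:moments-sdes}), but the combinatorial bookkeeping: one must check carefully that the map (partition of $\{1,\ldots,n-1\}$, choice of block to enlarge or new singleton) is a bijection with partitions of $\{1,\ldots,n\}$, so that no term is counted twice and none is missed. Once this bijection is exhibited, the inductive step closes and the representation (\ref{eq:higher-deriv-Y-t}) follows; the fact that $D_{r_1,\ldots,r_n}^{i_1,\ldots,i_n}Y_t^i(\te)=0$ for $t<r_1\vee\cdots\vee r_n$ is immediate from the adaptedness structure inherited from the base case.
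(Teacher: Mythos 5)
The paper itself does not prove this lemma: the statement is quoted from the reference \cite{NS}, so there is no internal proof to compare against, and your induction is essentially the argument of that reference. The sketch is sound: the base case follows by differentiating \eqref{eq:sde} directly, and the inductive step by differentiating the order-$(n-1)$ equation, with the Leibniz/Fa\`a di Bruno recursion (adjoin $n$ to an existing block of a partition of $\{1,\ldots,n-1\}$, or create the new singleton $\{n\}$) which is indeed a bijection onto the partitions of $\{1,\ldots,n\}$, the inverse map being ``delete $n$ from its block''. Two points deserve more care than your sketch gives them. First, the step you dismiss as standard --- that $Y_t\in\D^{n,p}$, so that the order-$(n-1)$ equation may be differentiated term by term and $D$ exchanged with the Young integral --- is the real technical content of \cite{NS}: it is obtained there via a priori estimates for linear equations of the generic form \eqref{eq:eqZ} together with approximation arguments, and it is Hypothesis \ref{hyp:coeff-sde2}(i) (boundedness of the derivatives of $\mu,\si$ of all orders) that makes those estimates close at every order; a bare appeal to the chain rule does not by itself give closability of the $n$-th derivative. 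Second, your ``symmetrization'' remark should be made precise: assume $r_n=r_1\vee\cdots\vee r_n$; then $D_{r_n}^{i_n}$ annihilates the $n-1$ initial terms of the order-$(n-1)$ equation (they involve only $Y$ and its derivatives at times $r_p<r_n$), the boundary contribution of the $dB$ integral produces $\alpha^{i}_{i_n,i_1,\ldots,i_{n-1}}(r_n;r_1,\ldots,r_{n-1};\te)$, and the remaining terms $p\neq n$ in the sum $\sum_{p=1}^{n}$ of the statement vanish in this configuration, because every summand of $\alpha^{i}_{i_p,\ldots}(r_p;\cdots)$ contains a factor in which the index $n$ is applied to $Y_{r_p}$ with $r_p<r_n$; the general statement then follows since both sides of \eqref{eq:higher-deriv-Y-t} are symmetric in the pairs $(i_p,r_p)$. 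With these two points supplied, your induction closes and reproduces the proof of the cited reference.
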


The formulas above might seem intricate. The following example illustrate their use in a simple enough situation: 
\begin{example}
The second order derivative $D_{r_{1},r_{2}}^{1,3}Y_{t}^{2}(\theta)$ can be computed as:
\begin{eqnarray*}
D_{r_{1},r_{2}}^{1,3}Y_{t}^{2}(\theta) &=& \alpha_{1,3}^{2}(r_{1},r_{2};\theta) + \alpha_{3,1}^{2}(r_{2},r_{1};\theta) \\
&&+ \int_{r_{1}\vee r_{2}}^{t} \beta^{2}_{1,3}(s,r_{1},r_{2};\theta)\;ds + \sum_{l=1}^{d} \int_{r_{1}\vee r_{2}}^{t} \alpha_{l,1,3}^{2}(s,r_{1},r_{2};\theta) dB_{s}^{l},
\end{eqnarray*}
where
\begin{eqnarray*}
\alpha_{1,3}^{2}(r_{1},r_{2};\theta) &=& \sum_{k=1}^{m} \partial_{k} \sigma^{21}(Y_{r_{2}}(\theta);\theta)\;D_{r_{2}}^{3} Y_{r_{1}}^{k}(\theta),\\
\alpha_{3,1}^{2}(r_{2},r_{1};\theta) &=& \sum_{k=1}^{m} \partial_{k} \sigma^{23}(Y_{r_{1}}(\theta);\theta)\;D_{r_{1}}^{1} Y_{r_{2}}^{k}(\theta) 
\end{eqnarray*}
and
\begin{eqnarray*}
\beta^{2}_{1,3}(s,r_{1},r_{2};\theta) &=&  \partial^{2}_{kk} \mu^{2}(Y_{s}(\theta);\theta) D_{r_{1},r_{2}}^{1,3} Y_{s}^{k}(\theta)
+\partial_{k_{1}k_{2}}^{2} \mu^{2}(Y_{s}(\theta);\theta) D_{r_{1}}^{1} Y_{s}^{k_{1}}(\theta) D_{r_{2}}^{3} Y_{s}^{k_{2}}(\theta), \\
\alpha_{l,1,3}^{2}(s,r_{1},r_{2};\theta) &=&   \partial^{2}_{kk} \sigma^{2l}(Y_{s}(\theta);\theta) D_{r_{1},r_{2}}^{1,3} Y_{s}^{k}(\theta)
+ \partial_{k_{1}k_{2}}^{2} \sigma^{2l}(Y_{s}(\theta);\theta) D_{r_{1}}^{1} Y_{s}^{k_{1}}(\theta) D_{r_{2}}^{3} Y_{s}^{k_{2}}(\theta),
\end{eqnarray*}
where we have used the convention of summation over repeated indices.
\end{example}

Our formula for the log-likelihood will also involve some derivatives of the process $Y(\theta)$ with respect to the parameter $\theta$. The existence of this derivative is assessed below:
\begin{proposition}\label{prop:dif-theta-Y}
Under the same hypothesis as for Lemma \ref{lem:higher-mall-deriv-Y}, the random variable $Y_{t}^{i}(\te)$ is a smooth function of $\te$ for any $t\ge 0$. We denote by $\nabla_{l}Y_{t}^{i}(\te)$ the derivative of $Y_{t}^{i}(\te)$ with respect to the $l^{th}$ element of the vector of parameters $\te$. This process satisfies the following SDE:
\begin{eqnarray*}
\nabla_{l}Y_{t}^{i}(\theta) &=& 
\int_{0}^{t} [\partial_{i} \mu^{i}(Y_{u}(\theta);\theta) \nabla_{l}Y_{u}^{i}(\theta) + \nabla_{l}\mu^{i}(Y_{u}(\theta);\theta)]du \\
&& + \sum_{j=1}^{d}\int_{0}^{t}[\partial \sigma^{ij}(Y_{u}(\theta);\theta) \nabla_{l}Y_{u}^{i}(\theta) + \nabla_{l}\sigma^{ij}(Y_{u}(\theta);\theta)]dB_{u}^{j}.
\end{eqnarray*}
\end{proposition}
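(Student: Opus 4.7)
The plan is a two-step approach. First, view the displayed equation as a \emph{linear} Young SDE in the unknown process $Z^{(l)}(\theta) := \nabla_{l}Y(\theta)$, where the driving path $Y(\theta)$ and the coefficient derivatives $\partial\mu(\cdot;\theta), \nabla_{l}\mu(\cdot;\theta), \partial\sigma(\cdot;\theta), \nabla_{l}\sigma(\cdot;\theta)$ evaluated along $Y(\theta)$ are already known $\beta$-Hölder functions (for any $\beta < H$) by Proposition \ref{prop:moments-sdes} and Hypothesis \ref{hyp:coeff-sde}. Existence and uniqueness of a $\beta$-Hölder solution then follows from a standard fixed point argument using the sharp Young estimate (\ref{eq:ineq-young-sharp}), carried out on small time intervals and then patched together, exactly as in the construction of $Y(\theta)$ itself.

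Second, identify $Z^{(l)}(\theta)$ with the genuine partial derivative of $Y_t^i(\theta)$ via a difference quotient analysis. Set $\Phi_\epsilon(t) := \epsilon^{-1}[Y_t(\theta+\epsilon e_l) - Y_t(\theta)] - Z^{(l)}_t(\theta)$, subtract the SDEs satisfied by $Y(\theta+\epsilon e_l)$, $Y(\theta)$ and $Z^{(l)}(\theta)$, and perform first-order Taylor expansions of $\mu(\cdot;\cdot)$ and $\sigma(\cdot;\cdot)$ in both arguments (the remainders being controlled by the $\cac_b^2$ bounds of Hypothesis \ref{hyp:coeff-sde}). This yields
\begin{equation*}
\Phi_\epsilon(t) = \int_0^t A_u \Phi_\epsilon(u)\,du + \sum_{j=1}^d \int_0^t B_u^j \Phi_\epsilon(u)\,dB_u^j + R_\epsilon(t),
\end{equation*}
where $A, B$ are fixed bounded $\beta$-Hölder processes depending only on $Y(\theta)$, and $R_\epsilon$ is a remainder term. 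The same linear Young SDE estimate applied with zero initial condition and forcing $R_\epsilon$ gives $\|\Phi_\epsilon\|_{\beta,T} \le c \|R_\epsilon\|_{\beta,T}$, reducing matters to showing that $\|R_\epsilon\|_{\beta,T} \to 0$ as $\epsilon \to 0$. Smoothness to arbitrary order in $\theta$ follows by iteration: formally differentiating the equation for $Z^{(l)}$ again produces a linear Young SDE for $\nabla_{l_1 l_2}Y(\theta)$ whose analysis is identical.

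The main technical obstacle is controlling $R_\epsilon$ in the $\beta$-Hölder seminorm rather than in the supremum norm, which is unavoidable since the Young integral requires Hölder regularity of the integrand. This calls for uniform-in-$\theta$ Hölder bounds on $u \mapsto Y_u(\theta)$ and on the coefficient derivatives along $Y(\theta)$; these are available from Proposition \ref{prop:moments-sdes}(2) combined with the uniformity clause $\sup_{\theta\in\tte}$ in Hypothesis \ref{hyp:coeff-sde}. Together with the $\epsilon$-Lipschitz dependence of $\mu(\cdot;\theta)$, $\sigma(\cdot;\theta)$ on $\theta$ (uniform in $x$), this delivers the required Hölder remainder estimate and closes the argument.
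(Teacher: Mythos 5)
Your proposal is correct and follows essentially the same route as the paper, which gives no details itself and simply notes that the proof goes along the same lines as \cite[Proposition 4]{NS} --- namely the argument you outline: set up the linear Young equation for the candidate derivative, solve it by the fixed-point/pathwise estimates of Propositions \ref{prop:young-intg} and \ref{prop:moments-sdes}, and identify it with the difference quotient via a Taylor expansion and a Gronwall-type bound in H\"older norm. The only caveat is that your claim of smoothness in $\te$ to arbitrary order by iteration would require bounds on higher-order $\te$-derivatives of $\mu$ and $\si$ beyond the two orders stated in Hypothesis \ref{hyp:coeff-sde}, but the paper's statement is equally loose on this point.
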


\begin{proof}
The proof goes exactly along the same lines as for \cite[Proposition 4]{NS}, and the details are left to the reader.

\end{proof}

 We shall also need some equations governing the Malliavin derivatives of $\nabla_{l}Y(\te)$.  This is the aim of the following lemma:
\begin{lemma}\label{lem:higher-mall-deriv-nabla-Y}
For any $l\in\{1,\ldots,q\}$ and $n\ge 1$, the process $\nabla_{l}D^{i_{1},\ldots,i_{n}}Y(\te)$ is $n$-times differentiable in the Malliavin calculus sense. Moreover, taking up the notations of Lem\-ma~\ref{lem:higher-mall-deriv-Y}, the process $\nabla_{l}D^{i_{1},\ldots,i_{n}}Y_{t}^{i}(\theta)$ satisfies the following linear equation: for $t \geq r_{1}\vee \cdots \vee r_{n}$,
\begin{multline*}
\nabla_{l}D_{r_{1},\ldots,r_{n}}^{i_{1},\ldots,i_{n}} Y_{t}^{i}(\theta)=
\sum_{p=1}^{n} \hat{\alpha}^{i,l}_{i_{p},i_{1}\ldots,\check{\imath}_p,\ldots, n}(r_{i_{p}},r_1,\ldots,\check{r}_{p},\ldots,r_{n};\theta) \\
+ \int_{r_{1}\vee \cdots \vee r_{n}}^{t} \hat{\beta}^{i,l}_{i_{1},\ldots,i_{n}} (s;r_{1},\ldots,r_{n};\theta) \;ds
+\sum_{l=1}^{d} \int_{r_{1}\vee \cdots \vee r_{n}}^{t}  \hat{\alpha}^{i,l}_{l,i_{1},\ldots,i_{n}} (s;r_{1},\ldots,r_{n};\theta) \;dB_{s}^{l},
\end{multline*}
where $\hat{\alpha}^{i,l}_{j,i_{1},\ldots,i_{n}}=\nabla_{l}\alpha^{i}_{j,i_{1},\ldots,i_{n}}$ and $\hat{\beta}^{i,l}_{j,i_{1},\ldots,i_{n}}=\nabla_{l} \beta ^{i}_{i_{1},\ldots,i_{n}}$. More specifically, $\hat{\beta}^{i,p}_{j,i_{1},\ldots,i_{n}}$ is defined recursively by
\begin{align*}
&\hat{\beta}^{i,p}_{i_{1},\ldots,i_{n}} (s;r_{1},\ldots,r_{n};\theta) \\ 
& =\sum_{I_{1} \cup \ldots \cup I_{\nu}} \sum_{k_{1},\ldots,k_{\nu}=1 }^{m}
\Big\{ \nabla_{p} [\partial_{k_{1}\ldots k_{\nu}}^{\nu}  \mu^{i}(Y_{s}(\theta);\theta)]\;
D_{r(I_{1})}^{i(I_{1})} Y_{s}^{k_{1}}(\theta) \cdots D_{r(I_{\nu})}^{i(I_{\nu})} Y_{s}^{k_{\nu}}(\theta) \\
&\quad+\partial_{k_{1}\ldots k_{\nu}}^{\nu}  \mu^{i}(Y_{s}(\theta);\theta)\;
\sum_{p=1}^{\nu} \nabla_{p}D_{r(I_{p})}^{i(I_{p})} Y_{s}^{k_{p}}(\theta)  \, D_{r(I_{1})}^{i(I_{1})} Y_{s}^{k_{1}}(\theta) \cdots D_{\check{r}(I_p)}^{\check{\imath}(I_p)}Y_s^{\check{k}_p}(\te)
  \cdots  D_{r(I_{\nu})}^{i(I_{\nu})} Y_{s}^{k_{\nu}}(\theta) 
  \Big\},
\end{align*}
where we have set 
$$
\nabla_{p} [\partial_{k_{1}\ldots k_{\nu}}^{\nu}  \mu^{i}(Y_{s}(\theta);\theta)] = \nabla_{p}\partial_{k_{1}\ldots k_{\nu}}^{\nu}  \mu^{i}(Y_{s}(\theta);\theta) + \partial  \partial_{k_{1}\ldots k_{\nu}}^{\nu}  \mu^{i}(Y_{s}(\theta);\theta) \nabla_{p} Y_{s}(\theta).
$$ 
Notice that the same kind of equation (skipped here for sake of conciseness) holds true for the coefficients $\hat{\alpha}^{i,l}_{j,i_{1},\ldots,i_{n}}$.
\end{lemma}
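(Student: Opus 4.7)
The plan is to proceed by induction on the order $n$ of the Malliavin derivative. The base case $n=0$ (i.e.\ existence and equation for $\nabla_l Y$ itself) is exactly Proposition \ref{prop:dif-theta-Y}. At the inductive step, I assume that for every $k\le n-1$ the parametric derivative $\nabla_l D^{i_1,\ldots,i_k}Y(\theta)$ exists and is governed by a linear SDE of the announced form, and then take the equation (\ref{eq:higher-deriv-Y-t}) from Lemma \ref{lem:higher-mall-deriv-Y} for $D^{i_1,\ldots,i_n}Y_t^i(\theta)$ and formally differentiate both sides with respect to $\theta_l$.

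The key step is the chain-rule bookkeeping for $\alpha^i_{\cdots}$ and $\beta^i_{\cdots}$, each of which is a finite sum over partitions $I_1\cup\cdots\cup I_\nu$ of $\{1,\ldots,n\}$ of a product of a partial derivative of $\mu$ or $\sigma$ with $\nu$ Malliavin derivatives of $Y$. The operator $\nabla_l$ distributes in two ways: first, onto the smooth coefficient $\partial^\nu_{k_1\cdots k_\nu}\mu^i(Y_s(\theta);\theta)$, which produces both the explicit parametric piece $\nabla_l\partial^\nu_{k_1\cdots k_\nu}\mu^i$ and the chain piece $\partial \partial^\nu_{k_1\cdots k_\nu}\mu^i\cdot\nabla_l Y_s$ according to the definition stated in the lemma; second, onto one of the factors $D^{i(I_p)}_{r(I_p)}Y_s^{k_p}(\theta)$, which is known to be $\theta$-differentiable by the induction hypothesis when $|I_p|<n$ and is the unknown we are constructing when $|I_p|=n$. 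Summing the resulting terms over all partitions yields exactly the expression for $\hat\beta^{i,l}_{i_1,\ldots,i_n}$ in the statement, and an identical computation gives $\hat\alpha^{i,l}_{j,i_1,\ldots,i_n}=\nabla_l\alpha^i_{j,i_1,\ldots,i_n}$.

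To make this rigorous I would justify the exchange of $\nabla_l$ with the Young and Lebesgue integrals, then close the argument via a fixed-point scheme for the resulting linear equation in the H\"older space $\cac^\beta_T$, uniformly in $\theta$ on compact subsets of $\tte$. The needed ingredients are Hypothesis \ref{hyp:coeff-sde} and \ref{hyp:coeff-sde2}, providing uniform-in-$\theta$ boundedness of all partial derivatives of $\mu$ and $\sigma$, together with the sharper Young estimate (\ref{eq:ineq-young-sharp}) and the moment/H\"older bounds of Proposition \ref{prop:moments-sdes}. By uniqueness of the solution to the linear SDE, the limit produced by the fixed point coincides with $\nabla_l D^{i_1,\ldots,i_n}Y(\theta)$. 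The $n$-times Malliavin differentiability is then obtained by iterating the same scheme: the coefficients $\hat\alpha^{i,l},\hat\beta^{i,l}$ involve only smooth functions of $Y$ and Malliavin derivatives of $Y$ of order at most $n$, all of which lie in $\bigcap_{k,p}\D^{k,p}$ by \cite{NS}, so the standard closability of $D$ combined with the moment estimates for linear Young SDEs transfers this regularity to the parametric derivative.

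The hard part is not conceptual but combinatorial: tracking how $\nabla_l$ distributes over all partitions $I_1\cup\cdots\cup I_\nu$ and over the two possible landing spots (the coefficient versus the Malliavin factors), and expressing the result cleanly in the notation of Notation \ref{not:partialDER}. Since the derivation pattern is exactly parallel to that of equation (\ref{eq:higher-deriv-Y-t}) in \cite{NS}, I would record only the genuinely new term (the $\partial\partial^\nu_{k_1\cdots k_\nu}\mu^i \cdot \nabla_p Y_s$ chain contribution arising from the $\theta$-dependence through $Y_s(\theta)$) and refer the reader to \cite{NS} for the remaining bookkeeping, exactly as is done for Proposition \ref{prop:dif-theta-Y}.
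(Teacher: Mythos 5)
Your proposal is correct and follows exactly the route the paper leaves implicit: the paper states this lemma without proof (as with Proposition \ref{prop:dif-theta-Y}, the pattern is to defer to the arguments of \cite{NS}), and the intended argument is precisely yours, namely differentiating the linear equation (\ref{eq:higher-deriv-Y-t}) of Lemma \ref{lem:higher-mall-deriv-Y} with respect to $\theta_{l}$, tracking how $\nabla_{l}$ distributes over the coefficient $\partial^{\nu}_{k_{1}\cdots k_{\nu}}\mu^{i}(Y_{s}(\theta);\theta)$ (giving both the explicit parametric term and the chain term through $Y_{s}(\theta)$) and over the Malliavin factors indexed by the partitions, and then closing the resulting linear Young SDE by a fixed-point and uniqueness argument under Hypothesis \ref{hyp:coeff-sde} and \ref{hyp:coeff-sde2}. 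Your write-up, including the induction on $n$ with Proposition \ref{prop:dif-theta-Y} as base case and the convergence of difference quotients to the solution of the formally differentiated equation, supplies the details the paper omits and is sound.
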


The next object we need for our calculations is the inverse of the Malliavin matrix $\ga_{Y_t(\te)}$ of $Y_t(\te)$. Recall that according to (\ref{eq:def-mall-matrix}), the Malliavin matrix of $Y_t(\te)$ is defined by
 \begin{equation}\label{eq:def-gamma-Y-t}
\ga_t(\te):=\gamma_{Y_{t}(\theta)} 
= \lp \lla D_{\cdot}Y_{t}^{i}(\theta), D_{\cdot}Y_{t}^{j}(\theta) \rra \rp_{1\leq i,j \leq m},
\end{equation}
where we have set $\ga_t(\theta):=\gamma_{Y_{t}(\theta)}$ for notational sake in the computations below. We shall now compute $\ga_t^{-1}(\theta)$ as the solution to a SDE:
\begin{proposition}
The matrix valued process $\ga_t^{-1}(\theta)$ is the unique solution to the following linear equation in $\eta$:
\begin{eqnarray}
\eta_{t}(\theta) &=& \tilde{\alpha}_{0}^{-1}(Y_{t}(\theta);\theta) - \sum_{l=1}^{d}\int_{0}^{t} [ \eta_{u}(\theta) \tilde{\alpha}_{l}(Y_{u}(\theta);\theta) + \tilde{\alpha}_{l}^{T}(Y_{u}(\theta);\theta)\eta_{u} ] dB_{u}^{l} \notag\\
&&  - \int_{0}^{t} [\eta_{u}(\theta) \tilde{\beta}(Y_{u}(\theta);\theta) + \tilde{\beta}^{T}(Y_{u}(\theta);\theta)\eta_{u}(\theta) ] du,\label{eq:sde-ga-inverse}
\end{eqnarray}
with
\[ 
\tilde{\alpha}_{0}(Y_{t}(\theta);\theta) = \sum_{j=1}^{m} \int_{0}^{t}  \int_{0}^{t} \sigma^{ij}(Y_{r}(\theta);\theta) \sigma^{i'j}(Y_{r'}(\theta);\theta)\; |r-r'|^{2H-2} dr\;dr' , i,i' = 1,\ldots, m
\]
and where the other coefficients $\tilde{\al}$ and $\tilde{\beta}$ are defined by
 \begin{equation}\label{eq:def-alpha-beta}
\tilde{\alpha}_{l}(Y_{u}(\theta);\theta) = \Bigl(\partial_{k} \sigma^{i'l} (Y_{u}(\theta);\theta) \Bigr)_{1\leq i', k \leq m} \;\; \text{ and } \;\; \tilde{\beta}(Y_{u}(\theta);\theta)= \Bigl(\partial_{k} \mu^{i'} (Y_{u}(\theta);\theta) \Bigr)_{1\leq i', k \leq m}.
 \end{equation}
\end{proposition}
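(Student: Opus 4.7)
The plan is to derive an SDE for $\gamma_t(\te)$ itself and then obtain (\ref{eq:sde-ga-inverse}) by applying a Young-type product rule to the identity $\gamma_t(\te)\,\gamma_t^{-1}(\te) = \id$. First, I would observe that Lemma \ref{lem:higher-mall-deriv-Y} specialized to $n=1$ says that the matrix-valued process $A_{r,t} := D_r Y_t(\te)$, viewed as a function of $t\ge r$ with $r$ fixed, satisfies the linear equation
\begin{equation*}
A_{r,t} = \sigma(Y_r;\te) + \int_r^t \tilde\beta(Y_u;\te)\, A_{r,u}\, du + \sum_{l=1}^d \int_r^t \tilde\alpha_l(Y_u;\te)\, A_{r,u}\, dB_u^l,
\end{equation*}
with $\tilde\alpha_l,\tilde\beta$ as in (\ref{eq:def-alpha-beta}). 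In particular $A_{u,u} = \sigma(Y_u;\te)$, which will be crucial for identifying the boundary contributions below.

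Second, I would apply the product rule from Proposition \ref{prop:diff-calc-rule} to the matrix-valued process $A_{r,t} A_{r',t}^T$, then weight by the kernel $c_H|r-r'|^{2H-2}$ and integrate over $(r,r')\in[0,t]^2$. The terms coming from $d_t(A_{r,t}A_{r',t}^T)$ produce, after integration, the linear-in-$\gamma$ drift $\tilde\beta\gamma_t+\gamma_t\tilde\beta^T$ and the linear-in-$\gamma$ diffusion coefficients $\tilde\alpha_l\gamma_t+\gamma_t\tilde\alpha_l^T$, while the terms coming from the two moving upper limits are evaluated at $A_{u,u}=\sigma(Y_u;\te)$ and, after symmetrization in $(r,r')$, assemble into $d_t\tilde\alpha_0(Y_t;\te)$. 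This yields
\begin{equation*}
\gamma_t(\te) = \tilde\alpha_0(Y_t;\te) + \int_0^t \bigl[\tilde\beta\gamma_u+\gamma_u\tilde\beta^T\bigr]\,du + \sum_{l=1}^d \int_0^t \bigl[\tilde\alpha_l\gamma_u+\gamma_u\tilde\alpha_l^T\bigr]\, dB_u^l.
\end{equation*}
By the ellipticity assumption Hypothesis \ref{hyp:coeff-sde2}(ii), $\gamma_t(\te)$ is invertible for every $t>0$, so a second application of Proposition \ref{prop:diff-calc-rule} to $\gamma_t\eta_t=\id$ produces $d\eta_t = -\eta_t\, d\gamma_t\,\eta_t$. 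Substituting the above SDE for $\gamma_t$ and using $\eta_t\gamma_t = \gamma_t\eta_t=\id$ converts the linear-in-$\gamma$ terms into linear-in-$\eta$ terms of the form $\eta\tilde\alpha_l+\tilde\alpha_l^T\eta$ and $\eta\tilde\beta+\tilde\beta^T\eta$, while the $\tilde\alpha_0$-contribution becomes the forcing term $\tilde\alpha_0^{-1}(Y_t;\te)$ in the integrated version, which is precisely (\ref{eq:sde-ga-inverse}).

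Uniqueness of the solution $\eta$ to (\ref{eq:sde-ga-inverse}) is then obtained from the linearity of both the drift and the diffusion vector fields in $\eta_u$, combined with the sharp Young estimate (\ref{eq:ineq-young-sharp}) and a standard contraction/Gr\"onwall argument, exactly along the lines of Proposition \ref{prop:moments-sdes}. The main obstacle is the bookkeeping in the second step: one has to match the boundary contributions from the moving integration domain $[0,t]^2$ with the $t$-differential of $\tilde\alpha_0(Y_t;\te)$ in a symmetric manner, which hinges crucially on the identity $A_{u,u}=\sigma(Y_u;\te)$ and on the symmetry $(r,r')\leftrightarrow (r',r)$ in the weight $|r-r'|^{2H-2}$. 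A secondary technical point is the justification of the matrix product rule at the required Young/Malliavin regularity level, which is ensured by the uniform ellipticity bound on $\sigma\sigma^*$ together with Proposition \ref{prop:moments-sdes} applied to the derivative processes.
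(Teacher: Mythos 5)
Your overall route is the paper's route: apply the product rule of Proposition \ref{prop:diff-calc-rule} to products of first-order Malliavin derivatives (Lemma \ref{lem:higher-mall-deriv-Y} with $n=1$), integrate against the kernel $c_H|r-r'|^{2H-2}$ over $[0,t]^2$ to obtain a linear equation for $\gamma_t(\theta)$ with forcing term $\tilde\alpha_0$, and then pass to $\eta=\gamma^{-1}$; your uniqueness argument via linearity, the sharp Young bound (\ref{eq:ineq-young-sharp}) and a Gr\"onwall/contraction step is also what the paper implicitly relies on. The genuine gap is in how you pass to $\eta$. The paper does \emph{not} derive (\ref{eq:sde-ga-inverse}): it takes $\eta$ to be the solution of (\ref{eq:sde-ga-inverse}) and then \emph{verifies} $\gamma_t\eta_t=\id$ by another application of the product rule. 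You instead write $d\eta_t=-\eta_t\,d\gamma_t\,\eta_t$ and assert that the $\tilde\alpha_0$-contribution ``becomes the forcing term $\tilde\alpha_0^{-1}(Y_t;\theta)$ in the integrated version''. That step does not follow as stated: since the forcing in your $\gamma$-equation is the time-dependent matrix $\tilde\alpha_0(Y_t;\theta)$, its contribution to $d\eta_t$ is $-\eta_t\,d_t\tilde\alpha_0(Y_t;\theta)\,\eta_t$, whereas the equation you are aiming at contributes $d_t[\tilde\alpha_0^{-1}(Y_t;\theta)]=-\tilde\alpha_0^{-1}\,d_t\tilde\alpha_0\,\tilde\alpha_0^{-1}$; these coincide only if $\eta_t=\tilde\alpha_0^{-1}(Y_t;\theta)$, i.e. only if the $dB$ and $du$ terms contribute nothing, which fails for non-constant coefficients. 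So, as written, your derivation produces a different equation from (\ref{eq:sde-ga-inverse}); this is exactly the point where the paper switches from derivation to verification, and you would need either to adopt that verification argument or to supply the missing identification.

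A second place where your bookkeeping is not innocent is the claim that the moving-boundary terms of $[0,t]^2$ ``assemble into $d_t\tilde\alpha_0(Y_t;\theta)$''. The boundary contribution reads $c_H\sum_j\bigl[\sigma^{\cdot j}(Y_t)\int_0^t (D^j_{r'}Y_t)^T\,|t-r'|^{2H-2}dr'+\mbox{transpose}\bigr]$, i.e. it contains the Malliavin derivative $D_{r'}Y_t$ evaluated at the running time $t$, while $\tfrac{d}{dt}\tilde\alpha_0(Y_t;\theta)$ contains $\sigma(Y_{r'};\theta)$ in its place; the identity $A_{u,u}=\sigma(Y_u;\theta)$ only covers the diagonal point $r'=t$, and the $(r,r')$-symmetry does not convert one expression into the other, since $D_{r'}Y_t\neq\sigma(Y_{r'};\theta)$ for $t>r'$ (a propagator sits in between). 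The paper handles this in integrated form, through the product formula (\ref{eq:prd-DYi-DYi'}) whose constant term is $\sigma^{ij}(Y_r)\sigma^{i'j}(Y_{r'})$; if you keep your differential presentation you must either reproduce that integrated computation or justify the matching explicitly — it is not automatic, as a one-dimensional drift-free example with $D_rY_t=\sigma(Y_r)\exp(\int_r^t\sigma'(Y_u)dB_u)$ already shows.
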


\begin{proof}
The proof of this fact is an adaptation of \cite[Theorem 7]{HN} to the case of a SDE with drift. We include it here for sake of completeness, and we drop the dependence of $Y$ on $\theta$ for notational sake in the computations below.

\smallskip

Let us start by invoking Proposition \ref{prop:diff-calc-rule} and equation (\ref{eq:higher-deriv-Y-t}) in order to compute the product of two first-order Malliavin derivatives:
\begin{eqnarray}\label{eq:prd-DYi-DYi'}
&& D_{r}^{j}Y_{t}^{i}\;D_{r'}^{j}Y_{t}^{i'} = \sigma^{ij}(Y_{r}) \sigma^{i'j}(Y_{r'}) + \\
&& + \sum_{k=1}^{m} \Biggl\{ \int_{0}^{t}  \sum_{l=1}^{d} \biggl[ \partial_{k} \sigma^{il}(Y_{u})\; D_{r'}^{j}Y_{u}^{i'}\;  D_{r}^{j}Y_{u}^{k}  + \partial_{k} \sigma^{i'l}(Y_{u})\; D_{r}^{j}Y_{u}^{i}\;  D_{r'}^{j}Y_{u}^{k}\; \biggr]\; dB_{u}^{l}   \notag \\
&& + \int_{0}^{t} \biggl[ \partial_{k} \mu^{i}(Y_{u})\; D_{r'}^{j}Y_{u}^{i'}\;  D_{r}^{j}Y_{u}^{k}\;+  \partial_{k} \mu^{i'}(Y_{u})\; D_{r}^{j}Y_{u}^{i}\;  D_{r'}^{j}Y_{u}^{k} du  \biggr]\Biggr\}.  \notag
\end{eqnarray}
Moreover, recall that $\ga_t$ is defined by (\ref{eq:def-gamma-Y-t}).
Thus, the covariance matrix becomes
\begin{equation*}
\gamma_{t}^{i i'} =   \sum_{j=1}^{d} \lla D^{j}Y_{t}^{i} ,\, D^{j}Y_{t}^{i'}\rra_{\ch}
= c_H \sum_{j=1}^{d} \int_{0}^{t} \int_{0}^{t} D_{r}^{j}Y_{t}^{i}(\theta)\; D_{r'}^{j}Y_{t}^{i'}(\theta)\; |r-r'|^{2H-2}\;dr\;dr'.
\end{equation*}
Plugging (\ref{eq:prd-DYi-DYi'}) into this relation, we end up with the following equation for $\gamma^{i i'}$:
\begin{multline*}
\gamma_{t}^{i i'} = \tilde{\alpha}_{0}^{ii'} + \sum_{l=1}^{d} \int_{0}^{t} \sum_{k=1}^{m}\biggl[ \partial_{k} \sigma^{i\;l}(Y_{u})\; \gamma_{u}^{i'k} + \partial_{k} \sigma^{i'l}(Y_{u})\; \gamma_{u}^{ik}\biggr] dB_{u}^{l}\\
 + \int_{0}^{t} \sum_{k=1}^{m} \biggl[ \partial_{k} \mu^{i}(Y_{u})\; \gamma_{u}^{i'k} + \partial_{k} \mu^{i'}(Y_{u})\; \gamma_{u}^{i\;k} \biggr] du.
\end{multline*}
Using our notation (\ref{eq:def-alpha-beta}) and matrix product rules, we obtain that $\ga_t$ is solution to:
\begin{equation*}
\gamma_{t} = \sum_{l=1}^{d} \int_{0}^{t} ( \tilde{\alpha}_{l}(Y_{u}) \gamma_{u} + \gamma_{u} \tilde{\alpha}_{l}^{T}(Y_{u}))dB_{u}^{l}
+ \int_{0}^{t} ( \tilde{\beta}(Y_{u}) \gamma_{u} + \gamma_{u} \tilde{\beta}^{T}(Y_{u}) ) du.
\end{equation*}
Consider now $\eta$ solution to (\ref{eq:sde-ga-inverse}). Applying again Proposition \ref{prop:diff-calc-rule}, it is readily checked that $\ga_t\eta_t=\id$ for any $t\in\ott$, which ends the proof.

\end{proof}

\begin{remark}\label{rmk:Y-non-degenerate}
Gathering equation (\ref{eq:sde-ga-inverse}) and Proposition \ref{prop:moments-sdes}, it is easily seen that for any $t>0$ and $\te\in\tte$, $Y_t(\te)$ is a non degenerate random variable in the sense given at \cite[Definition 2.1.2]{N-bk}: we have $\det(\ga_t^{-1})\in L^p(\oom)$ for any $p>1$.
\end{remark}

Now that we have derived an equation for $\eta=\ga^{-1}$, an equation for the Malliavin derivative of $\eta$ is also available:
\begin{proposition}\label{prop:mall-drv-eta}
For any $l\in\{1,\ldots,q\}$ and $n\ge 1$, the process $\eta_{t}=\ga_t^{-1}$ is $n$-time differentiable in the Malliavin calculus sense. Moreover, the process $D^{i_{1},\ldots,i_{n}}\eta_{t}$ satisfies the  following equation: for $t \geq r_{1}\vee \cdots \vee r_{n}$,
\begin{multline*}
D_{r_{1},\ldots,r_{n}}^{i_{1},\ldots,i_{n}} \eta_{t}^{ij}(\theta) =  - \sum_{k_{1}=1}^{n} \sum_{k_{2}=1}^{k_{1}}
(  D_{r_{1},\ldots,r_{k_{2}}}^{i_{1},\ldots,i_{k_{2}}} \tilde{\alpha}_{0}^{-1} 
\; D_{r_{1},r_{2},\ldots,r_{k_{1}-k_{2}}}^{i_{1},i_{2},\ldots,i_{k_{1}-k_{2}}} \tilde{\alpha}_{0}\;  
 D_{r_{1},\ldots,r_{n-k_{1}}}^{i_{1},\ldots,i_{n-k_{1}}} \tilde{\alpha}_{0}^{-1} )^{ij} \\
 - \sum_{\ell=1}^{d} \int_{r_{1}\vee \ldots  \vee r_{n}}^{t} C_{\ell,i_{1},\ldots,i_{n}}^{ij} (s;r_{1},\ldots,r_{n};\theta)  dB^{\ell}_{s}
-  \int_{r_{1}\vee \ldots  \vee r_{n}}^{t} A_{i_{1},\ldots,i_{n}}^{ij} (s;r_{1},\ldots,r_{n};\theta)  ds,
\end{multline*}
where
\begin{eqnarray*}
&& A_{i_{1},\ldots,i_{n}}^{ij} (s;r_{1},\ldots,r_{n};\theta)= \sum \sum_{k_{1},\ldots,k_{\nu}}^{m} \sum_{k=1}^{m}\\ 
&& \Big\{ 
 [ \partial_{k_{1},\ldots,k_{\nu}}^{\nu} (\tilde{\beta}(Y_{u}(\theta);\theta))^{kj}\; D_{r(I_{1})}^{i(I_{1})}\eta_{s}^{ik}(\theta) \ldots D_{r(I_{\nu})}^{i(I_{\nu})}\eta_{s}^{ik}(\theta)\;\;D_{r(I_{1})}^{i(I_{1})}Y_{s}^{i}(\theta) \ldots D_{r(I_{\nu})}^{i(I_{\nu})}Y_{s}^{i}(\theta) ] \\
&& + 
[ \partial_{k_{1},\ldots,k_{\nu}}^{\nu} (\tilde{\beta}(Y_{u}(\theta);\theta))^{ik}\; D_{r(I_{1})}^{i(I_{1})}\eta_{s}^{kj}(\theta) \ldots D_{r(I_{\nu})}^{i(I_{\nu})}\eta_{s}^{kj}(\theta)\;\; D_{r(I_{1})}^{i(I_{1})}Y_{s}^{j}(\theta) \ldots D_{r(I_{\nu})}^{i(I_{\nu})}Y_{s}^{j}(\theta) ] 
\Big\}
\end{eqnarray*}
and the same kind of equation holds for $C_{l, i_{1},\ldots,i_{n}}^{ij} (s;r_{1},\ldots,r_{n};\theta)$, with the coefficients $\beta$ replaced by $\alpha_l$.
\end{proposition}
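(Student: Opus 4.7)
The plan is to proceed by induction on $n$, differentiating the SDE (\ref{eq:sde-ga-inverse}) for $\eta_t = \ga_t^{-1}$ term-by-term and keeping careful track of the combinatorial structure that arises from iterated Leibniz and chain rules.

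First I would establish that $\eta_t$ genuinely belongs to $\D^{n,p}$ for every $n,p$. The process $Y_t(\te)$ is infinitely Malliavin differentiable by iterating Lemma \ref{lem:higher-mall-deriv-Y}, and $\ga_t(\te)$ is a quadratic functional of $DY_t$, hence belongs to $\D^{n,p}(\ch^{\otimes n})$ for every $n$ and $p$. Combining this with the non-degeneracy statement of Remark \ref{rmk:Y-non-degenerate} (which gives $\det(\ga_t)^{-1}\in L^p$ for every $p$) and the classical matrix-inversion lemma in $\D^{k,p}$ (see \cite[Lemma 2.1.6]{N-bk}), one obtains $\eta_t = \ga_t^{-1} \in \D^{n,p}$.

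Next I would apply $D_{r_1,\ldots,r_n}^{i_1,\ldots,i_n}$ to each side of equation (\ref{eq:sde-ga-inverse}). On the two integral terms the derivative is pushed inside exactly as in the derivation of (\ref{eq:higher-deriv-Y-t}): commuting $D$ with the Young integral replaces the lower bound $0$ by $r_1 \vee \cdots \vee r_n$ and transfers the derivative onto the integrand. The iterated Leibniz rule (Proposition~\ref{prop:diff-calc-rule}) applied to the products $\eta_u \tilde\beta(Y_u) + \tilde\beta^T(Y_u)\eta_u$ and $\eta_u \tilde\al_l(Y_u) + \tilde\al_l^T(Y_u)\eta_u$ produces exactly the sum over partitions $I_1\cup\cdots\cup I_\nu$ of $\{1,\ldots,n\}$ appearing in the coefficients $A^{ij}_{i_1,\ldots,i_n}$ and $C^{ij}_{\ell,i_1,\ldots,i_n}$: the partition records which indices fall on each factor, and the inner sum over $k_1,\ldots,k_\nu$ is the Faà di Bruno expansion of $\partial^\nu \tilde\beta(Y_u)$ against the Malliavin derivatives $D^{i(I_p)}_{r(I_p)} Y$.

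The most delicate term is the ``initial condition'' $\tilde\al_0^{-1}(Y_t(\te);\te)$, which lies outside the integrals and must be handled directly. Here the strategy is to iterate the matrix-inversion identity $D(A^{-1}) = -A^{-1}(DA)A^{-1}$ for $A=\tilde\al_0$: each subsequent application of $D$ lands on one of the three existing factors (either the leading $\tilde\al_0^{-1}$, the central $\tilde\al_0$, or the trailing $\tilde\al_0^{-1}$), and inductively collecting contributions by counting how many derivatives hit each slot yields the double sum $-\sum_{k_1=1}^{n}\sum_{k_2=1}^{k_1}(D \tilde\al_0^{-1})(D\tilde\al_0)(D\tilde\al_0^{-1})$ in the statement, with indices $k_2$ and $k_1-k_2$ and $n-k_1$ recording the split.

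The main obstacle I anticipate is bookkeeping rather than analytic: one must consistently distribute the $n$ index–time pairs $(i_p,r_p)$ among the factors of multilinear expressions while respecting the symmetry of higher Malliavin derivatives, which is precisely why the formulas take their heavy partition form. A secondary, but well-understood, point is the justification of the commutation of $D$ with the Young integrals in (\ref{eq:sde-ga-inverse}); this is handled by approximating the integrands by elementary processes in $\D^{n,p}(|\ch|)$, applying the standard commutation rule piecewise, and passing to the limit with the $\D^{n,p}$-bounds of Step~1 together with Proposition~\ref{prop:dom-strato}.
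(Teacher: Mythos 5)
Your proposal is correct and follows essentially the same route as the paper, whose proof is just the remark that the result follows from Lemma \ref{lem:higher-mall-deriv-Y} together with the identity $\frac{dA_{\lambda}^{-1}}{d\lambda}=-A_{\lambda}^{-1}\frac{dA_{\lambda}}{d\lambda}A_{\lambda}^{-1}$ applied to $\tilde{\alpha}_{0}$; you simply make explicit the differentiation of equation (\ref{eq:sde-ga-inverse}), the Leibniz/partition bookkeeping behind $A$ and $C$, and the $\D^{n,p}$ and commutation justifications that the paper leaves implicit.
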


\begin{proof}
The proof of this proposition is based on Lemma \ref{lem:higher-mall-deriv-Y} and the fact that $\frac{dA_{\lambda}^{-1}}{d\lambda} = -A_{\lambda}^{-1} \frac{dA_{\lambda}}{d\lambda} A_{\lambda}^{-1}$.

\end{proof}

Finally, one can also differentiate $\eta$ with respect to our standing parameter $\te$, which yields:
\begin{lemma}\label{lem:mall-drv-nabla-eta}
The derivative  of the inverse of the Malliavin matrix $\eta_{t}$ with respect to $\theta$ satisfies the following SDE
\begin{eqnarray*}
&&\nabla_{l} \eta_{t}(\theta)=\nabla_{l} \tilde{\alpha}_{0}^{-1}  - \sum_{\ell=1}^{d} \int_{0}^{t} \{\nabla_{l} \eta_{u}(\theta) \tilde{\alpha}_{\ell} (Y_{u}(\theta);\theta)  +\eta_{u}(\theta) \nabla_{l}[\tilde{\alpha}_{\ell}(Y_{u}(\theta);\theta)] \\
&&+ 
\nabla_{l} [\tilde{\alpha}_{\ell}^{T}(Y_{u}(\theta);\theta)] \eta_{u}(\theta) + \tilde{\alpha}_{\ell}^{T}(Y_{u}(\theta);\theta) \nabla_{l} \eta_{u}(\theta)\} dB_{u}^{\ell}- \int_{0}^{t} \{\nabla_{l} \eta_{u}(\theta) \tilde{\beta}(Y_{u}(\theta);\theta) \\
&&+ \eta_{u}(\theta) \nabla_{l} [\tilde{\beta}(Y_{u}(\theta);\theta)]  +\nabla_{l} [\tilde{\beta}^{T}(Y_{u}(\theta);\theta)] \eta_{u}(\theta) + \tilde{\beta}^{T}(Y_{u}(\theta);\theta) \nabla_{l} \eta_{u}(\theta)\} du,
\end{eqnarray*}
where $\nabla_{l}[\tilde{\beta}_{\ell}(Y_{u})] = \partial \tilde{\beta}_{\ell}(Y_{u}) \nabla_{l}Y_{u} + \nabla_{l}\tilde{\beta}_{\ell}(Y_{u})$
and   $\nabla_{l}[\tilde{\alpha}_{\ell}(Y_{u})] = \partial \tilde{\alpha}_{\ell}(Y_{u}) \nabla_{l}Y_{u} + \nabla_{l}\tilde{\alpha}_{\ell}(Y_{u})$.
\end{lemma}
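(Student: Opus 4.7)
The approach is to differentiate the SDE (\ref{eq:sde-ga-inverse}) for $\eta_t(\theta) = \gamma_t^{-1}(\theta)$ term-by-term with respect to the parameter $\theta_l$, applying the product rule and the chain rule to each integrand. First I would ensure that $\nabla_l \eta_t(\theta)$ exists as a bona fide process, and then compute it by pushing the derivative inside the Young and Lebesgue integrals.

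The first step is to establish that $\theta \mapsto \eta_t(\theta)$ is differentiable. Equation (\ref{eq:sde-ga-inverse}) is \emph{linear} in $\eta$, with coefficients $\tilde{\alpha}_{\ell}(Y_u(\theta);\theta)$ and $\tilde{\beta}(Y_u(\theta);\theta)$ depending smoothly on $\theta$ both explicitly (via Hypothesis \ref{hyp:coeff-sde2}) and implicitly through $Y_u(\theta)$ (via Proposition \ref{prop:dif-theta-Y}); a direct adaptation of the argument used for Proposition \ref{prop:dif-theta-Y} (itself based on \cite[Proposition 4]{NS}) therefore yields that $\nabla_l \eta_t(\theta)$ exists, inherits H\"older-regular paths and has moments of all orders. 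This is, for the linearized equation satisfied by the inverse Malliavin matrix, the analog of the parametric smoothness result already available for $Y(\theta)$ and $\nabla Y(\theta)$.

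The second step is the actual computation. Applying $\nabla_l$ to both sides of (\ref{eq:sde-ga-inverse}) and pushing the derivative inside the integrals, the initial term contributes $\nabla_l \tilde{\alpha}_0^{-1}(Y_t(\theta);\theta)$. For a generic integrand of the form $\eta_u(\theta)\, \tilde{\alpha}_\ell(Y_u(\theta);\theta)$, the product rule gives
\[
\nabla_l \bigl[ \eta_u(\theta)\, \tilde{\alpha}_\ell(Y_u(\theta);\theta) \bigr]
= \nabla_l \eta_u(\theta)\, \tilde{\alpha}_\ell(Y_u(\theta);\theta)
+ \eta_u(\theta)\, \nabla_l \bigl[ \tilde{\alpha}_\ell(Y_u(\theta);\theta) \bigr],
\]
while the chain rule, distinguishing the implicit $\theta$-dependence through $Y$ from the explicit one, yields
\[
\nabla_l \bigl[ \tilde{\alpha}_\ell(Y_u(\theta);\theta) \bigr]
= \partial \tilde{\alpha}_\ell(Y_u(\theta);\theta)\, \nabla_l Y_u(\theta)
+ \nabla_l \tilde{\alpha}_\ell(Y_u(\theta);\theta),
\]
and analogously for the transposed term $\tilde{\alpha}_\ell^T(Y_u) \eta_u$ and for the drift integrand built from $\tilde{\beta}$. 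Summing the four contributions inside each integral reproduces exactly the right-hand side of the equation announced in the statement.

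The main technical obstacle is the rigorous exchange of $\nabla_l$ with the Young stochastic integrals $\int_0^t \cdot\, dB_u^\ell$. Once we know that $\theta \mapsto \eta_u(\theta)$, $\theta \mapsto Y_u(\theta)$ and $\theta \mapsto \nabla_l Y_u(\theta)$ are smoothly parameterized and uniformly H\"older-regular in $u$ on compacts of $\tte$, the difference quotients $h^{-1}[u_s(\theta+h\mathbf{e}_l) - u_s(\theta)]$ converge to $\nabla_l u_s(\theta)$ with uniform control of the H\"older seminorms. The sharper Young estimate (\ref{eq:ineq-young-sharp}) together with the uniform bounds supplied by Hypothesis \ref{hyp:coeff-sde2}(i) then permit a dominated convergence argument on the Riemann sums defining the Young integral, legitimizing the identity $\nabla_l \int_0^t u_s\, dB_s^\ell = \int_0^t \nabla_l u_s\, dB_s^\ell$. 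Once this exchange is settled, collecting the terms identified in the previous paragraph finishes the derivation.
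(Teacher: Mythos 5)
Your proposal is correct and follows exactly the route the paper intends: the lemma is obtained by differentiating the linear equation (\ref{eq:sde-ga-inverse}) for $\eta$ with respect to $\theta_l$, using the product and chain rules to split the explicit and implicit (through $Y_u(\theta)$ and $\nabla_l Y_u(\theta)$) dependencies, with parametric differentiability justified as for Proposition \ref{prop:dif-theta-Y} via the argument of \cite[Proposition 4]{NS}. Your additional care about exchanging $\nabla_l$ with the Young integrals via uniform H\"older bounds and the estimate (\ref{eq:ineq-young-sharp}) only supplies details the paper leaves implicit.
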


\smallskip

\subsection{Probabilistic representation of the likelihood}
We have chosen to represent the log-likelihood of our sample thanks to the following formula borrowed from the stochastic analysis literature:
\begin{proposition}\label{prop:def-H-j}
Let $F$ be a $\R^m$-valued non degenerate random variable (see Remark~\ref{rmk:Y-non-degenerate} for references on this concept), and let $f$ be the density of $F$. For $n\ge 1$ and $(j_1,\ldots,j_n)\in\{1,\ldots,m\}^{n}$, let $H_{(j_1,\ldots,j_n)}(F)$ be defined recursively by $H_{(j_1)}(F)=\sum_{j=1}^{m} \delta(  (\ga_F^{-1})^{j_1j} DF^j)$ and
\begin{equation}\label{eq:H-F}
H_{(j_1,\ldots,j_n)}(F)=\sum_{j=1}^{m} \delta\lp  \lp\ga_F^{-1}\rp^{j_nj} DF^j H_{(j_1,\ldots,j_{n-1})}(F)\rp,
\end{equation}
where the Skorohod operator $\delta$ is defined at Section \ref{sec:sko-integrals}. Then one can write
\begin{equation}\label{eq:expr-density}
f(x)= \be\lc \1_{(F>x)} H_{(1,\ldots, m)}(F)\rc
= \be\lc \lp  F-x\rp_+ H_{(1,\ldots, m,1,\ldots, m)}(F)\rc,
\end{equation}
where $\1_{(F>x)}:=\prod_{i=1}^{m} \1_{(F^{i}>x_i)}$ and $(F-x)_+:=\prod_{i=1}^{m}(F^i-x_i)_+$.
\end{proposition}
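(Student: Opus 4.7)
The cornerstone is the classical one-step Malliavin integration-by-parts formula: for a non-degenerate $F\in(\D^\infty)^m$, a weight $G\in\D^\infty$, and a smooth $\vp:\R^m\to\R$ with polynomial growth,
\begin{equation*}
\be\lc \partial_{j}\vp(F)\, G \rc
= \be\lc \vp(F)\, \delta\!\lp G \sum_{j'=1}^{m}(\ga_F^{-1})^{jj'} DF^{j'}\rp\rc.
\end{equation*}
The plan is to iterate this identity $n$ times; at step $k$ we apply it with weight $G=H_{(j_1,\ldots,j_{k-1})}(F)$, which has just been produced at the previous step. A straightforward induction then yields
\begin{equation*}
\be\lc \partial_{j_1\cdots j_n}\vp(F)\rc = \be\lc \vp(F)\, H_{(j_1,\ldots,j_n)}(F)\rc,
\end{equation*}
with $H_{(j_1,\ldots,j_n)}(F)$ defined precisely by the recursion \eqref{eq:H-F}, since the ``outermost'' derivative is $\partial_{j_n}$ and its IBP produces the factor $\delta((\ga_F^{-1})^{j_nj}DF^{j}\cdot G)$.

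With this $n$-step formula at hand, the two representations of $f(x)$ in \eqref{eq:expr-density} will follow by specializing $\vp$. For the first identity, the natural (but non-smooth) choice is $\vp(y)=\1_{(y>x)}$, whose formal mixed derivative $\partial_{1,\ldots,m}\vp$ equals $\prod_{i=1}^{m}\der_{x_i}$, so that $\be[\partial_{1,\ldots,m}\vp(F)]=f(x)$ by the very definition of the density of $F$. One approximates $\vp$ by smooth $\vp_\ep$ (each Heaviside factor convolved with a mollifier), applies the $m$-step IBP to $\vp_\ep$, and lets $\ep\to 0$, passing to the limit by dominated convergence thanks to $H_{(1,\ldots,m)}(F)\in L^p(\oom)$ for every $p>1$. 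For the second formula we take instead the continuous $\vp(y)=\prod_i(y_i-x_i)_+$: a direct computation shows that its distributional $(2m)$-th derivative along the indices $(1,\ldots,m,1,\ldots,m)$ is $\partial_1^2\cdots\partial_m^2\vp=\prod_i\der_{x_i}$, and the same regularization scheme applied to the $2m$-step IBP produces the second equality of \eqref{eq:expr-density}.

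The main technical point is to verify that the recursion \eqref{eq:H-F} is well-defined in sufficiently regular Sobolev-Malliavin spaces so that every Skorohod integral makes sense and so that the $L^p$ control used in the limiting arguments is available. More precisely, one must check that for every $k$ and every $p>1$ the random variable $H_{(j_1,\ldots,j_k)}(F)$ belongs to $\D^{1,p}$, so that $(\ga_F^{-1})^{j_{k+1}j}DF^{j}\,H_{(j_1,\ldots,j_k)}(F)$ lies in $\dom(\der)$ with quantitative bounds. This is the combined consequence of (i) Remark \ref{rmk:Y-non-degenerate} together with the identity $d(A^{-1})=-A^{-1}(dA)A^{-1}$, which place $\ga_F^{-1}$ in $(\D^\infty)^{m\times m}$, (ii) the continuity $\der:\D^{k+1,p}(\ch)\to\D^{k,p}$, see \cite[Proposition 1.5.7]{N-bk}, and (iii) the Malliavin product and chain rules together with Meyer inequalities, which let us propagate the control on $\|H_{(j_1,\ldots,j_k)}(F)\|_{1,p}$ inductively in $k$. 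Once this stability is established, \eqref{eq:expr-density} follows from the regularization argument outlined above.
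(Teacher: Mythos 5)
Your argument is correct and follows essentially the paper's own route: the paper simply invokes \cite[Proposition 2.1.5]{N-bk} for the first identity and gets the second by $m$ further integrations by parts, and what you spell out is precisely the standard iterated Malliavin integration-by-parts proof behind that citation (your mollification/dominated-convergence limit, in place of Nualart's antiderivative-plus-Fubini device, is a harmless variant since the density here is smooth). The only small imprecision is in the stability step: to make sense of $H_{(j_1,\ldots,j_{k+1})}(F)$ and to keep iterating one needs to propagate bounds on $H_{(j_1,\ldots,j_k)}(F)$ in $\D^{j,p}$ for all orders $j$ (not merely $\D^{1,p}$, since $\delta$ loses one derivative), but this follows from the same ingredients you list --- non-degeneracy, $\ga_F^{-1}\in\D^{\infty}$, and the Meyer inequalities.
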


\begin{proof}
The first formula is a direct application of \cite[Proposition 2.1.5]{N-bk}. The second one is obtained along the same lines, integrating by parts $m$ additional times with respect to the first one.

\end{proof}

The formula above can obviously be applied to $Y_t(\te)$ for any strictly positive $t$, since we have noticed at Remark \ref{rmk:Y-non-degenerate} that $Y_t(\te)$ is a non-degenerate random variable.
However, the expression of $H_{(j_1,\ldots,j_n)}(Y_t(\te))$ given by (\ref{eq:H-F}) is written in terms of Skorohod integrals, which are not amenable to numerical computations. We will thus recast this expression in terms of Young integrals plus some correction terms:
\begin{proposition}\label{prop:def-H-j-young}
Under Hypothesis \ref{hyp:coeff-sde} and \ref{hyp:coeff-sde2}, let us define $Q_{st}^{pji}:=(\ga_s^{-1})^{pj} D_s^{i}Y_t^{j}(\te)$ for $0\le s<t\le T$, $p,j\in\{1,\ldots,m\}$ and $i\in \{1,\ldots, d\}$. Consider  $p\in\{1,\ldots,m\}$ and a real valued random variable $G$ which is smooth in the Malliavin calculus sense. Set 
\begin{equation}\label{eq:correc-sko-strato}
U_p(G)=\sum_{i=1}^{m} \sum_{j=1}^{d} G \int_0^{t} Q_{st}^{pji} \, dB_s^{i}
- c_{H} \sum_{i=1}^{m} \sum_{j=1}^{d} \int_0^{t} \int_0^{t} D_s^{i} \lc  G Q_{rt}^{pji}\rc |r-s|^{2H-2} dr ds,
\end{equation}
where the integral with respect to $B$ is understood in the Young sense. Then the quantities $H_{(j_1,\ldots, j_n)}(Y_t(\te))$ defined at Proposition \ref{prop:def-H-j} can be expressed as
\begin{equation}\label{eq:H-j-composition-U-j}
H_{(j_1,\ldots, j_n)}(Y_t(\te))=\sum_{j=1}^{m}
U_{j_{n}}\circ\cdots\circ U_{j_{1}}\lp Y_t^{j}(\te) \rp.
\end{equation} 
\end{proposition}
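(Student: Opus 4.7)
The natural route is induction on $n$, reading the operator $U_p$ as the Young-integral counterpart, furnished by Proposition \ref{prop:dom-strato}, of the Skorohod operation $G \mapsto \sum_{j=1}^{m} \delta(G\,(\ga_t^{-1})^{pj}\,DY_t^{j}(\te))$ appearing in the recursive definition \eqref{eq:H-F}.

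\emph{Step 1: Regularity setup.} Before iterating, I would verify that at every level $k\le n$ of the recursion the process $s \mapsto G_k\,Q^{pji}_{st}$ --- where $G_k := H_{(j_1,\ldots,j_{k-1})}(Y_t(\te))$ --- lies in $\D^{1,2}(|\ch|)$, satisfies the integrability bound \eqref{eq:an}, and has $\beta$-H\"older paths with $\beta+H>1$, so that Proposition \ref{prop:dom-strato} applies. The factor $(\ga_t^{-1})^{pj}$ is controlled by Remark \ref{rmk:Y-non-degenerate} (all inverse moments of $\det\ga_t$ are finite), and its Malliavin derivatives by Proposition \ref{prop:mall-drv-eta}; the factor $D_s^{i}Y_t^{j}(\te)$ is controlled by Lemmas \ref{lem:higher-mall-deriv-Y} and \ref{lem:higher-mall-deriv-nabla-Y}; and $G_k$ itself is a polynomial expression in these objects that inherits their smoothness, provided the induction is propagated on a $\D^{n-k+1,p}$ scale with $k$ increasing along the recursion.

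\emph{Step 2: Base case $n=1$.} Unraveling the definition \eqref{eq:H-F} gives $H_{(j_1)}(Y_t(\te)) = \sum_{j=1}^{m}\delta\bigl((\ga_t^{-1})^{j_1 j}\,DY_t^{j}(\te)\bigr)$. Apply Proposition \ref{prop:dom-strato} to each summand, with $j$ playing the role of the ``row'' index and $i$ that of the $B$-coordinate. The Young integral produced by Proposition \ref{prop:dom-strato} is exactly the first sum in \eqref{eq:correc-sko-strato} with $G=1$, and its double-integral correction is the second sum. Summing over $j$ delivers $H_{(j_1)}(Y_t(\te)) = U_{j_1}(1)$, which is \eqref{eq:H-j-composition-U-j} in the case $n=1$.

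\emph{Step 3: Inductive step.} Assume the identity holds up through length $n-1$, and set $G := H_{(j_1,\ldots,j_{n-1})}(Y_t(\te)) = U_{j_{n-1}}\circ\cdots\circ U_{j_1}(1)$. The recursion \eqref{eq:H-F} reads
\[
H_{(j_1,\ldots,j_n)}(Y_t(\te)) = \sum_{j=1}^{m}\delta\lp G\,(\ga_t^{-1})^{j_n j}\,DY_t^{j}(\te)\rp .
\]
Invoking Proposition \ref{prop:dom-strato} one more time on the process $s\mapsto G\,Q^{j_n j i}_{st}$, the Young integral gives the $G\int_0^t Q^{j_n j i}_{st}\,dB_s^{i}$ term of \eqref{eq:correc-sko-strato} and the correction gives the $D_s^{i}[G\,Q^{j_n j i}_{rt}]$ double integral. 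Summation over $j$ then yields $U_{j_n}(G) = U_{j_n}\circ U_{j_{n-1}}\circ\cdots\circ U_{j_1}(1)$, closing the induction.

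\emph{Main obstacle.} The genuinely delicate point is the propagation of regularity in Step 1: each composition with a new $U_p$ costs one Malliavin derivative, multiplies by the inverse Malliavin matrix, and inflates the polynomial complexity of the integrand. To justify Proposition \ref{prop:dom-strato} at every depth of the nesting one must carry a $\D^{n-k+1,p}$ estimate for $G_k$ along the recursion, together with uniform $L^p$ bounds on $\det(\ga_t)^{-1}$ and on all its Malliavin derivatives up to order $n$. These rely crucially on the uniform ellipticity of $\si\si^\ast$ in Hypothesis \ref{hyp:coeff-sde2}(ii) and on the uniform-in-$\te$ boundedness of the derivatives of $\mu,\si$ in Hypothesis \ref{hyp:coeff-sde2}(i), combined with the moment estimates of Proposition \ref{prop:moments-sdes} and the SDEs of Proposition \ref{prop:mall-drv-eta} and Lemma \ref{lem:mall-drv-nabla-eta}. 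Once these a priori bounds are in place, the algebraic part of the argument is the mechanical iteration described in Steps 2--3.
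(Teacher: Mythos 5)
Your route coincides with the paper's: the paper disposes of this proposition in one line, as an immediate consequence of Proposition \ref{prop:dom-strato} combined with the nondegeneracy of $Y_t(\te)$ recorded in Remark \ref{rmk:Y-non-degenerate}, and your Steps 2--3 are exactly that argument written out as an induction on the length of the index tuple, with your Step 1 supplying the regularity bookkeeping (membership in $\D^{1,2}(|\ch|)$, condition \eqref{eq:an}, H\"older paths) that the paper treats as implicit.

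One point needs attention, however. What your induction actually establishes is $H_{(j_1,\ldots,j_n)}(Y_t(\te)) = U_{j_n}\circ\cdots\circ U_{j_1}(1)$, because the recursion \eqref{eq:H-F} starts from $H_{(j_1)}(F)=\sum_{j=1}^{m}\delta\bigl((\ga_F^{-1})^{j_1 j}DF^j\bigr)$, i.e.\ with weight $G\equiv 1$. The right-hand side of \eqref{eq:H-j-composition-U-j} as printed, $\sum_{j}U_{j_n}\circ\cdots\circ U_{j_1}\bigl(Y_t^{j}(\te)\bigr)$, carries an extra multiplicative factor $Y_t^{j}$ inside the innermost Skorohod integral (and an outer sum over $j$ on top of the sums already built into $U_p$), and the two expressions are not equal in general. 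So your claim in Step 2 that $U_{j_1}(1)$ ``is \eqref{eq:H-j-composition-U-j} in the case $n=1$'' is not literally true: what you have proved is the version of the formula consistent with \eqref{eq:H-F} and with the density representation \eqref{eq:expr-density} (which requires weight $1$), whereas the displayed statement --- like the worked example following it, which starts from $\delta\bigl(Y_t^{i}(\ga^{-1})^{1j_1}D^{j_1}Y_t^{i}\bigr)$ --- appears to contain a notational slip. You should make this reconciliation explicit instead of identifying the two silently. A related, smaller conflation to flag in one sentence: the paper defines $Q_{st}^{pji}=(\ga_s^{-1})^{pj}D_s^{i}Y_t^{j}(\te)$ with the Malliavin matrix taken at time $s$, while the recursion \eqref{eq:H-F} (and your Step 3) uses $(\ga_{Y_t(\te)}^{-1})^{pj}$, which is constant in $s$; Proposition \ref{prop:dom-strato} applies either way, but the two choices do not produce the same integrand, so say which one you mean.
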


\begin{proof}

It is an immediate consequence of Proposition \ref{prop:dom-strato}, since we have noticed in our Remark \ref{rmk:Y-non-degenerate} that $Y_t(\te)$ is a non-degenerate random variable.

\end{proof}

The previous proposition is still not sufficient to warranty an effective computation of the log-likelihood. Indeed, the right hand side of (\ref{eq:correc-sko-strato}) contains terms of the form $D_s [  G Q_{rt}^{pji}]$, which should be given in a more explicit form. This is the content of our next proposition.
\begin{proposition}\label{prop:comput-H}
Set $H_{(j_1,\ldots, j_n)}(Y_t(\te)):=K_{j_{1}\ldots j_{n}}$. Then the term $D_s [  K_{j_{1}\ldots j_{n}} Q_{rt}^{pji}]$  in~(\ref{eq:correc-sko-strato}) can be computed inductively as follows: 

\smallskip

\noindent
\emph{(i)}
We have $D_s [  K_{j_{1}\ldots j_{n}} Q_{rt}^{pji}]=D_s K_{j_{1}\ldots j_{n}} \, Q_{rt}^{pji} + K_{j_{1}\ldots j_{n}} \, D_sQ_{rt}^{pji}$, and $D_sQ_{rt}^{pji}$ is computed by invoking Proposition~\ref{prop:mall-drv-eta} for the derivative of $\ga_t^{-1}$ and Lemma \ref{lem:higher-mall-deriv-Y} for the derivative of $Y_t(\te)$. We are thus left with the computation of $D_s K_{j_{1}\ldots j_{n}}$.

\smallskip

\noindent
\emph{(ii)}
Assume now that we can compute $n-r$ Malliavin derivatives of $K_{j_{1}\ldots j_{r}}$. Notice that this condition is met for $r=0$, since $Y_t(\te)$ itself can be differentiated $n$ times in an explicit way according to Lemma \ref{lem:higher-mall-deriv-Y} again. Then for any $j_1,\ldots,j_{r+1}$ and $k\le n-r-1$, the quantity $K_{j_{1}\ldots j_{r+1}}$ can be differentiated $k$ times, with a Malliavin derivative given by
\begin{eqnarray}\label{eq:der-k-K}
D_{\rho_{1} \ldots \rho_{k}}^{i_{1},\ldots,i_{k}} K_{j_{1}\ldots j_{r+1}} &=&
 \sum_{\ell =1}^{k} D_{\rho_{1} \ldots \check{\rho}_{\ell} \ldots \rho_{k}}^{i_{1},\ldots,\check{i}_{\ell} \ldots,i_{k}} (K_{j_{1}\ldots j_{r}}\; Q_{\rho_{\ell} t}^{pji}) 
 +  \sum_{j=1}^{d}\int_{0}^{t} D_{\rho_{1} \ldots \rho_{k}}^{i_{1},\ldots,i_{k}} (K_{j_{1}\ldots j_{r}}\; Q_{s t}^{pji}) dB_{s}^{j} \notag\\
&& - c_{H} \int_{0}^{t} \int_{0}^{t} D_{r_{1} \rho_{1} \ldots \rho_{k}}^{k+1} (K_{j_{1}\ldots j_{r}}\; Q_{r_{2} t}^{pji}) |r_{1}-r_{2}|^{2H-2}dr_{1} dr_{2}.
\end{eqnarray}
\end{proposition}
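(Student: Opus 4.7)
The strategy is to deduce both assertions directly from the recursive definition of the operators $U_{j}$ given in~(\ref{eq:correc-sko-strato})--(\ref{eq:H-j-composition-U-j}), combined with the standard Leibniz rule and the interaction of Malliavin differentiation with Young integrals.

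Part (i) is the easiest piece. It is just the product rule in $\D^{1,2}$ applied to $K_{j_1\ldots j_n}\, Q_{rt}^{pji}$. The only non trivial piece is $D_s Q_{rt}^{pji}$, which after a second application of the product rule reads
\begin{equation*}
D_s Q_{rt}^{pji} = D_s\bigl( (\gamma_r^{-1})^{pj}\bigr)\, D_r^{i} Y_t^{j}(\theta) + (\gamma_r^{-1})^{pj}\, D_s D_r^{i} Y_t^{j}(\theta),
\end{equation*}
and both summands are completely identified by Proposition~\ref{prop:mall-drv-eta} (for $n=1$) and by Lemma~\ref{lem:higher-mall-deriv-Y} (for $n=2$), respectively.

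For part (ii) I would argue by induction on $r$, with $k$ playing the role of an auxiliary parameter. The base case $r=0$ is provided by Lemma~\ref{lem:higher-mall-deriv-Y} together with Proposition~\ref{prop:dif-theta-Y}, which explicitly yield all Malliavin derivatives (up to order $n$) of $K_\emptyset = Y_t^{j}(\theta)$. For the inductive step I combine the recursive relation $K_{j_1\ldots j_{r+1}} = U_{j_{r+1}}(K_{j_1\ldots j_r})$ with~(\ref{eq:correc-sko-strato}) to write
\begin{equation*}
K_{j_1\ldots j_{r+1}} = \sum_{i,j} K_{j_1\ldots j_r}\int_0^{t} Q_{st}^{p j i}\, dB_s^{i} - c_H \sum_{i,j}\int_0^t\!\!\int_0^t D_s^{i}\bigl[K_{j_1\ldots j_r}\, Q_{rt}^{pji}\bigr]\,|r-s|^{2H-2}\,dr\,ds,
\end{equation*}
and then apply $D_{\rho_1\ldots\rho_k}^{i_1,\ldots,i_k}$ term by term. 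The deterministic kernel $|r-s|^{2H-2}$ commutes with the closed operator $D$, so the double integral contributes precisely the last line of~(\ref{eq:der-k-K}). For the Young integral $\int_0^t F_s\, dB_s^{i}$ with $F_s = K_{j_1\ldots j_r}\, Q_{st}^{pji}$, I would invoke the identity
\begin{equation*}
D_\rho^{i'}\!\int_0^t F_s\, dB_s^{i} = \delta_{i,i'}\, F_\rho + \int_0^t D_\rho^{i'} F_s\, dB_s^{i},
\end{equation*}
obtained by converting the Young integral into a Skorohod integral plus the trace term of Proposition~\ref{prop:dom-strato}, differentiating the Skorohod piece through the usual commutation relation $D\delta = \delta D + \mathrm{Id}$, and converting back. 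Iterating this identity $k$ times produces exactly the boundary sum $\sum_{\ell=1}^{k}$ together with the residual Young integral displayed in~(\ref{eq:der-k-K}).

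The main obstacle is book-keeping. At each inductive step one must check that the integrand $K_{j_1\ldots j_r}\, Q_{st}^{pji}$ belongs to $\D^{1,2}(|\ch|)$ and has almost surely $\beta$-Hölder paths with $\beta + H > 1$, so that Proposition~\ref{prop:dom-strato} may legitimately be invoked; one also needs every successive Malliavin derivative appearing in the argument to be in $L^p(\Omega)$ for $p$ large enough to justify the interchange of derivative and integral. These facts follow from Hypotheses~\ref{hyp:coeff-sde}--\ref{hyp:coeff-sde2}, the moment bounds of Proposition~\ref{prop:moments-sdes}, and the linear SDEs satisfied by the Malliavin derivatives in Lemma~\ref{lem:higher-mall-deriv-Y} and Proposition~\ref{prop:mall-drv-eta}, but they require a careful (and lengthy) bookkeeping of the regularity at each order.
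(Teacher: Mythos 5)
Your argument is correct and is essentially the paper's own proof: the paper differentiates the Skorohod representation $K_{j_1\ldots j_{r+1}}=\delta(K_{j_1\ldots j_r}\,Q_{\cdot t})$ via the iterated commutation relation $D^{k}\delta(W)=\sum_{\ell}D^{k-1}W_{\rho_\ell}+\delta(D^{k}W)$ and then converts the remaining Skorohod integral back to a Young integral with Proposition \ref{prop:dom-strato}, which is exactly the composite of the two steps packaged in your identity $D_\rho^{i'}\int_0^t F_s\,dB_s^i=\delta_{i,i'}F_\rho+\int_0^t D_\rho^{i'}F_s\,dB_s^i$, so the two computations coincide term by term (boundary sum, Young integral, trace term). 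Your treatment of part (i) and your closing remarks on the $\D^{1,2}(|\ch|)$ and H\"older regularity needed to invoke Proposition \ref{prop:dom-strato} are consistent with, and somewhat more explicit than, the paper's very brief proof.
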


\begin{proof}
We focus on the induction step (ii), the other one being straightforward: for a smooth random variable $W$, one easily gets by induction that
\begin{equation}\label{eq:der-p-delta-W}
D_{r_{1}\ldots r_{p}}^{i_{1},\ldots,i_{p}} \delta(W) = \sum_{\ell =1}^{p} D_{r_{1}\ldots \check{r}_{\ell} \ldots r_{p}}^{i_{1},\ldots,\check{i}_{\ell} \ldots,i_{p}} W_{r_\ell} + \delta(D^{i_{1},\ldots,i_{p}}_{r_{1}\ldots r_{p}} W ).
\end{equation}
Suppose we know the $n-r$ Malliavin derivatives for $U_{j_{r}} \circ \dots \circ U_{j_{1}}(F) := K_{j_{1}\ldots j_{r}}$. Recall moreover that
\begin{equation*}
K_{j_{1}\ldots j_{r+1}} = U_{j_{r+1}} ( K_{j_{r}\ldots j_{1}} ) = \delta( K_{j_{r}\ldots j_{1}}\; Q_{\cdot t})
\end{equation*}
Applying directly relation (\ref{eq:der-p-delta-W}) we thus get, for $k \leq m-1$:
\begin{equation*}
D_{\rho_{1} \ldots \rho_{k}}^{i_{1},\ldots,i_{k}} \delta(K_{j_{r}\ldots j_{1}}\; Q_{\cdot t}) =
 \sum_{\ell =1}^{k} D_{\rho_{1} \ldots \check{\rho}_{\ell} \ldots \rho_{k}}^{i_{1},\ldots,\check{i}_{\ell}, \ldots,i_{k-1}} (K_{j_{r}\ldots j_{1}}\; Q_{\rho_{\ell} t}) 
 + \delta(D_{\rho_{1} \ldots \rho_{k}}^{i_{1},\ldots,i_{k}} (K_{j_{r}\ldots j_{1}}\; Q_{\cdot t})).
\end{equation*}
Our formula (\ref{eq:der-k-K}) is now obtained by applying Proposition \ref{prop:dom-strato} to the Skorohod integral $\delta(D_{\rho_{1} \ldots \rho_{k}}^{k} (K_{j_{r}\ldots j_{1}}\; Q_{\cdot t}))$ above.

\end{proof}

\begin{example}
As an illustration of the proposition above, we compute $U_{2}\circ U_{1}(F)$ for  $F=Y_{t}^{i}$, $i\in\{1,\ldots,m\}$ and our $d$-dimensional fBm $B$.

\smallskip

Write first $U_{1}(Y_{t}^{i}) = \delta(Y_{t}^{i}\;(\gamma^{-1})^{1j_{1}} \; D^{j_{1}}Y_{t}^{i})$, and since this quantity has to be expressed in a suitable way for numerical approximations, we have
\begin{equation*}
U_{1}(Y_{t}^{i})  
=\sum_{j_{1}=1}^{d} Y_{t}^{i} \int_{0}^{t}  Q_{ut}^{1ij_{1}} dB_{u}^{j_{1}} - c_{H} \sum_{j_{1}=1}^{d}  \int_{0}^{t} \int_{0}^{t} D_{u_{1}}^{j_{1}} [Y_{t}^{i} Q_{u_{2}t}^{1ij_{1}}] |u_{1}-u_{2}|^{2H-2} du_{1} du_{2},
\end{equation*}
where $Q$ is defined at Proposition \ref{prop:def-H-j-young} and where the first integral in the right hand side is understood in the Young sense. In order to compute the second one, we have to compute Malliavin derivatives. This is done through Lemma \ref{lem:higher-mall-deriv-Y} for $Y$ and Proposition \ref{prop:mall-drv-eta} for $Q$.

\smallskip

We now have to differentiate $U_{1}(Y_{t}^{i})$: the derivation rules for Skorohod integrals immediately yield
\begin{equation*}
D_{u_{2}}^{j_{2}}[U_{1}(Y_{t}^{i})] = \sum_{j_{2}=1}^{d}Y_{t}^{i}Q_{u_2t}^{2ij_{2}} + \sum_{j_{2}=1}^{d}\delta(D_{u_{2}}^{j_{2}}Y_{t}^{i} Q_{\cdot t}^{2ij_{2}}).
\end{equation*}
Once again, the Skorohod integral above is not suitable for numerical approximations. Write thus
\begin{multline*}
D_{u_{2}}^{j_{2}}[U_{1}(Y_{t}^{i})]=
\sum_{j_{2}=1}^{d} Y_{t}^{i}Q_{u_{2}t}^{2ij_{2}} + \sum_{j_{2}=1}^{d}\int_{0}^{t} D_{u_{2}}^{j_{2}}[Y_{t}^{i}Q_{rt}^{2ij_{2}}] dB_{r}^{j_{2}} \\
 - c_{H} \sum_{j_{2}=1}^{d}\int_{0}^{t} \int_{0}^{t} D_{u_{2}}^{j_{2}}D_{u_{1}}^{j_{1}} [Y_{t}^{i} Q_{u_{2}t}^{2ij_{2}}] |u_{2}-u_{1}|^{2H-2} du_{1}du_{2},
\end{multline*}
and compute the Malliavin derivatives of the products $YQ$ thanks to Lemma \ref{lem:higher-mall-deriv-Y} for $Y$ and Proposition \ref{prop:mall-drv-eta} for $Q$. Once this is done, just write
\begin{eqnarray*}
&& U_{2}(U_{1}(Y_{t}^{i})) = \delta(U_{1}(Y_{t}^{i}) Q_{\cdot t}^{i})\\
&=& \sum_{j_{2}=1}^{d}U_{1}(Y_{t}^{i}) \int_{0}^{t} Q_{ut}^{2ij_{2}} dB_{u}^{j_{2}} - c_{H} \sum_{j_{2}=1}^{d}\int_{0}^{t} \int_{0}^{t} D_{u_{2}}^{j_{2}} [U_{1}(Y_{t}^{i}) Q_{u_{1}t}^{2ij_{2}}] |u_{2} - u_{1}|^{2H-2} du_{1}du_{2}.
\end{eqnarray*}
\end{example}

\smallskip

In order to give our formula for the derivative of the log-likelihood, we still need to compute the derivative with respect to $\theta$ of $H_{(j_1,\ldots, j_n)}(Y_{t}(\te))$. For this we state the following lemma
\begin{lemma}\label{lem:nabla-U-p}
The derivative with respect to $\theta$ of $U_{p}(Y_{t}(\theta))$ can be written as
\begin{eqnarray*}
\nabla_{l}U_p(Y_{t}^{i}(\theta))&=&\sum_{j=1}^{d} [\nabla_{l}Y_{t}^{i}(\theta) \int_0^{t} Q_{st}^{pij}(\theta)\, dB_s^{j} + Y_{t}^{i}(\theta) \int_0^{t} \nabla_{l}[Q_{st}^{pij}(\theta)]\, dB_s^{j}]\\
&& - c_{H}\sum_{j=1}^{d} \int_0^{t} \int_0^{t} \nabla_{l} [D_s^{j}  Y_{t}^{i}(\theta) Q_{rt}^{pij}(\theta)] |r-s|^{2H-2} dr ds,
\end{eqnarray*}
where $\nabla_{l}Y_{t}^{i}(\theta)$ is computed according to Proposition \ref{prop:dif-theta-Y} and $ \nabla_{l} [D_s^{j}  Y_{t}^{i}]$ is given by Lem\-ma~\ref{lem:higher-mall-deriv-nabla-Y}. As far as $\nabla_{l}[Q_{st}^{pj} (\theta)]$ is concerned, it is obtained through the following equation:
\[
\nabla_{l}[Q_{st}^{pj} (\theta)]= \nabla_{l}\eta_{s}^{pj}(\theta)\;D_{s}Y_{t}^{j}(\theta)\;+\;\eta_{s}^{pj}(\theta) \nabla_{l}[D_{s}Y_{t}^{j}(\theta)],
\]
where the expression for $\nabla_{l}\eta_{s}^{pj}(\theta)$ is a consequence of Lemma \ref{lem:mall-drv-nabla-eta}.
\end{lemma}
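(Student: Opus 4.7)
The plan is to simply start from the definition \eqref{eq:correc-sko-strato} of $U_{p}$ with $G=Y_{t}^{i}(\te)$, and differentiate with respect to $\te_{l}$ term by term, using product rule and the smoothness results already established. Concretely, writing
\begin{equation*}
U_{p}(Y_{t}^{i}(\te))=\sum_{j=1}^{d} Y_{t}^{i}(\te)\int_{0}^{t} Q_{st}^{pij}(\te)\, dB_{s}^{j}
-c_{H}\sum_{j=1}^{d}\int_{0}^{t}\int_{0}^{t} D_{s}^{j}\bigl[Y_{t}^{i}(\te)\, Q_{rt}^{pij}(\te)\bigr]\, |r-s|^{2H-2}\, dr\, ds,
\end{equation*}
one gets the stated formula as soon as $\nabla_{l}$ can be exchanged with the Young integral $\int_{0}^{t}(\cdot)\, dB_{s}^{j}$ and with the double Lebesgue integral on $\ott^{2}$.

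The first step is to justify the differentiation under the Young integral. This reduces to showing that the maps $\te\mapsto Q_{\cdot t}^{pij}(\te)$ and $\te\mapsto Y_{t}^{i}(\te)$ are $\cac^{1}$ as functions into $\cac^{\beta}([0,t])$ for some $\beta>1-H$, with derivatives identified by Proposition~\ref{prop:dif-theta-Y} (for $Y(\te)$) and by combining Proposition~\ref{prop:mall-drv-eta}, Lemma~\ref{lem:higher-mall-deriv-Y} and Lemma~\ref{lem:mall-drv-nabla-eta} (for $Q(\te)=\eta(\te)\cdot DY(\te)$). Given such H\"older-continuous dependence, the sharp Young estimate \eqref{eq:ineq-young-sharp} applied to difference quotients shows
\begin{equation*}
\nabla_{l}\!\int_{0}^{t} Q_{st}^{pij}(\te)\, dB_{s}^{j}=\int_{0}^{t}\nabla_{l} Q_{st}^{pij}(\te)\, dB_{s}^{j},
\end{equation*}
and the product rule for the prefactor $Y_{t}^{i}(\te)$ produces the two Young-type terms in the statement.

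The second step handles the corrective double integral. The integrand $D_{s}^{j}[Y_{t}^{i}(\te)Q_{rt}^{pij}(\te)]\,|r-s|^{2H-2}$ is $L^{1}(drds)$ uniformly in $\te$ in a neighborhood of any $\te_{0}\in\tte$, thanks to Hypothesis~\ref{hyp:coeff-sde2} and the uniform moment bounds of Proposition~\ref{prop:moments-sdes} (which also propagate to $\eta$, $DY$ and their $\te$-derivatives via the linear SDEs of Lemmas~\ref{lem:higher-mall-deriv-Y}, \ref{lem:higher-mall-deriv-nabla-Y}, \ref{lem:mall-drv-nabla-eta} and Proposition~\ref{prop:mall-drv-eta}). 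Dominated convergence then allows to move $\nabla_{l}$ inside, and since $\nabla_{l}$ commutes with the Malliavin derivative $D_{s}^{j}$ on smooth functionals one is left with $\nabla_{l}[D_{s}^{j}(Y_{t}^{i}Q_{rt}^{pij})]$, which is exactly the quantity appearing in the statement.

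Finally, the identification $\nabla_{l} Q_{st}^{pij}(\te)=\nabla_{l}\eta_{s}^{pj}(\te)\, D_{s}^{i}Y_{t}^{j}(\te)+\eta_{s}^{pj}(\te)\,\nabla_{l}[D_{s}^{i}Y_{t}^{j}(\te)]$ is simply the product rule applied to $Q=\eta\cdot DY$, the two factors being explicitly computable via Lemma~\ref{lem:mall-drv-nabla-eta} and Lemma~\ref{lem:higher-mall-deriv-nabla-Y}. The main (indeed the only nontrivial) obstacle in the argument is therefore the smooth $\te$-dependence in H\"older norm needed to commute $\nabla_{l}$ with the Young integral; once this is in place the rest is a bookkeeping application of the product rule together with the identifications provided by the earlier lemmas.
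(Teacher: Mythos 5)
Your argument is correct and is precisely the routine computation the paper has in mind for this lemma (which it in fact states without proof): differentiate the defining expression (\ref{eq:correc-sko-strato}) with $G=Y_{t}^{i}(\te)$ term by term via the product rule, justify exchanging $\nabla_{l}$ with the Young and Lebesgue integrals, and identify the pieces through Proposition \ref{prop:dif-theta-Y}, Lemma \ref{lem:higher-mall-deriv-nabla-Y} and Lemma \ref{lem:mall-drv-nabla-eta}. Your additional care about the H\"older-in-$\te$ dependence needed to commute $\nabla_{l}$ with the Young integral is a sensible filling-in of a step the paper leaves implicit.
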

\smallskip

We are now ready to state our probabilistic expression for the log-likelihood function~(\ref{eq:lln-rough}).

\begin{theorem}\label{prop:loglik}
Assume Hypothesis \ref{hyp:coeff-sde} and \ref{hyp:coeff-sde2} hold true. Let  $y_{t_{i}}$, $i=1,\ldots,n$ be the observation arriving at time $t_{i}$. Let also $Y_{t_{i}}$ be the solution to the SDE (\ref{eq:sde}) at time $t_{i}$. Then, the gradient of the log-likelihood function admits the following probabilistic representation:
$\nabla_{l}\ell_n(\te) =\sum_{i=1}^{n}\frac{V_i(\te)}{W_i(\te)}$, with
\begin{equation}\label{eq:expr-W-i}
W_i(\te)=\mathbf{E} \biggl[ \mathbf{1}_{(Y_{t_{i}}(\theta)>y_{t_{i}})}\; H_{(1,\ldots,m)}\Bigl( Y_{t_{i}}(\theta) \Bigr) \biggr]
\end{equation}
and 
\begin{multline}\label{eq:expr-V-i}
V_i(\te)=
\mathbf{E} \biggl[\nabla_{l}Y_{t_{i}}(\theta) \; \mathbf{1}_{(Y_{t_{i}}(\theta)>y_{t_{i}})} \;H_{(1,\ldots,m,1,\ldots, m)} \Bigl( Y_{t_{i}}(\theta) \Bigr)  \\
+ \Bigl( Y_{t_{i}}(\theta) - y_{t_{i}} \Bigr)_{+} \nabla_{l}H_{(1,\ldots,m,1,\ldots, m)} \Bigl( Y_{t_{i}}(\theta) \Bigr) \biggr],
\end{multline}
where  \emph{(i)} $H_{(j_1,\ldots,j_n)}( Y_{t_{i}}(\theta))$ is given recursively by (\ref{eq:H-j-composition-U-j}) and computed at Proposition \ref{prop:comput-H} \emph{(ii)} $\nabla_{l}Y_{t_{i}}(\theta)$ is given by Proposition \ref{prop:dif-theta-Y} \emph{(iii)} $\nabla_{l}H_{(1,\ldots,m,1,\ldots, m)}$ is obtained by applying~Lem\-ma \ref{lem:nabla-U-p}.
\end{theorem}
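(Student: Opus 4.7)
The plan is to start from the defining relation
\begin{equation*}
\ell_n(\theta)=\sum_{i=1}^{n}\ln\bigl(f(t_i,y_{t_i};\theta)\bigr),
\end{equation*}
and, for each $i$, to evaluate the numerator and denominator of
\begin{equation*}
\nabla_l\ln\bigl(f(t_i,y_{t_i};\theta)\bigr)=\frac{\nabla_l f(t_i,y_{t_i};\theta)}{f(t_i,y_{t_i};\theta)}
\end{equation*}
by applying the two distinct representations of the density in Proposition~\ref{prop:def-H-j} in a carefully chosen way.

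For the \emph{denominator} $f(t_i,y_{t_i};\theta)$ I would simply invoke the first identity in (\ref{eq:expr-density}) with $F=Y_{t_i}(\theta)$, $x=y_{t_i}$ and $m$ integrations by parts, which yields exactly
\begin{equation*}
f(t_i,y_{t_i};\theta)=\mathbf{E}\bigl[\mathbf{1}_{(Y_{t_i}(\theta)>y_{t_i})}\,H_{(1,\ldots,m)}(Y_{t_i}(\theta))\bigr]=W_i(\theta).
\end{equation*}
The non-degeneracy required to apply Proposition~\ref{prop:def-H-j} is provided by Remark~\ref{rmk:Y-non-degenerate}.

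For the \emph{numerator} $\nabla_l f(t_i,y_{t_i};\theta)$, I would instead start from the second representation in (\ref{eq:expr-density}), namely
\begin{equation*}
f(t_i,y_{t_i};\theta)=\mathbf{E}\Bigl[\bigl(Y_{t_i}(\theta)-y_{t_i}\bigr)_+\,H_{(1,\ldots,m,1,\ldots,m)}(Y_{t_i}(\theta))\Bigr].
\end{equation*}
The point of using this second form, which comes from $m$ additional integrations by parts, is that the integrand is now a Lipschitz (rather than discontinuous) function of $Y_{t_i}(\theta)$, which makes differentiation in $\theta$ under the expectation legitimate. Recalling that $Y_{t_i}(\theta)$ is smooth in $\theta$ by Proposition~\ref{prop:dif-theta-Y}, that $H_{(1,\ldots,m,1,\ldots,m)}(Y_{t_i}(\theta))$ inherits smoothness in $\theta$ from Lemma~\ref{lem:nabla-U-p} together with the SDEs in Lemmas~\ref{lem:higher-mall-deriv-Y}, \ref{lem:higher-mall-deriv-nabla-Y}, \ref{lem:mall-drv-nabla-eta} and Proposition~\ref{prop:mall-drv-eta}, and that $\partial_x(x)_+=\mathbf{1}_{(x>0)}$ a.e., one obtains
\begin{equation*}
\nabla_l f(t_i,y_{t_i};\theta)=\mathbf{E}\Bigl[\mathbf{1}_{(Y_{t_i}(\theta)>y_{t_i})}\,\nabla_l Y_{t_i}(\theta)\,H_{(1,\ldots,m,1,\ldots,m)}(Y_{t_i}(\theta))+\bigl(Y_{t_i}(\theta)-y_{t_i}\bigr)_+\,\nabla_l H_{(1,\ldots,m,1,\ldots,m)}(Y_{t_i}(\theta))\Bigr],
\end{equation*}
which is precisely $V_i(\theta)$.

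Combining these two expressions yields $\nabla_l\ell_n(\theta)=\sum_{i=1}^{n} V_i(\theta)/W_i(\theta)$, as claimed. The main obstacle is the justification of differentiation under the expectation sign, which is exactly why the representation involving $(F-x)_+$ rather than $\mathbf{1}_{(F>x)}$ must be used for the numerator: the former has an $L^1$ almost everywhere derivative whose product with the remaining smooth factors is dominated uniformly in a neighborhood of $\theta$ thanks to Proposition~\ref{prop:moments-sdes} and the integrability of Malliavin derivatives guaranteed by Hypothesis~\ref{hyp:coeff-sde2}. A domination argument of this type, routine but tedious, completes the argument.
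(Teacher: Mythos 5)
Your proposal is correct and follows essentially the same route as the paper's own proof: decompose $\nabla_l\ell_n(\theta)$ as the sum of ratios $\nabla_l f/f$, represent the denominator via the first identity in (\ref{eq:expr-density}) and the numerator by differentiating the second representation involving $(Y_{t_i}(\theta)-y_{t_i})_+$ and $H_{(1,\ldots,m,1,\ldots,m)}$ under the expectation. Your additional remark on why the $(\cdot)_+$ form is the right one to differentiate (Lipschitz integrand plus domination) simply makes explicit what the paper calls ``standard arguments.''
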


\begin{proof}
Recall that under Hypothesis \ref{hyp:coeff-sde} and \ref{hyp:coeff-sde2}, $Y_t(\te)$ admits a $\cac^\infty$ density $f(t,\cdot;\, \te)$ for any $t>0$ and $\te\in\tte$. Moreover, we have defined $\ell_n(\te)$ as $\ell_n(\te)=\sum_{i=1}^{n} \ln(f(t_i,y_{t_i};\, \te))$. Thus
\begin{equation*}
\nabla_{l}\ell_n(\te)=\sum_{i=1}^{n}\frac{\nabla_l f(t_i,y_{t_i};\, \te)}{f(t_i,y_{t_i};\, \te)}
:=\sum_{i=1}^{n}\frac{V_i(\te)}{W_i(\te)}.
\end{equation*}
Now $W_i(\te)$ can be expressed like (\ref{eq:expr-W-i}) by a direct application of (\ref{eq:expr-density}), first relation. As far as $V_i(\te)$ is concerned, write 
\begin{equation*}
f(t_i,y_{t_i};\, \te)= 
\be\lc \lp  Y_{t_i}(\te)-y_{t_i}\rp_+ H_{(1,\ldots, m,1,\ldots, m)}(Y_{t_i}(\te))\rc,
\end{equation*}
according to the second relation in (\ref{eq:expr-density}). By using standard arguments, one is allowed to differentiate this expression within the expectation, which directly yields (\ref{eq:expr-V-i}).

\end{proof}

\section{Discretization of the log-likelihood}
\label{sec:discret-likelihood}

The expression of the log-likelihood that we derived in Proposition \ref{prop:loglik} is a fraction of two expectations that do not have explicit formulas even in the one-dimensional case. In addition, our goal is to find the root of this non-explicit expression, the ML estimator, which is an even  harder task. To solve this problem in practice we first use a stochastic approximation algorithm in order to find the root of $\nabla_{l} \ell_{n}(\theta)$. In each iteration of the algorithm we compute the value of the expression using Monte-Carlo (MC) simulations. For each Monte-Carlo simulation, since we do not have available an exact way of simulating the kernels of the expectation, we use an Euler approximation scheme. More specifically, we simulate using Euler approximation terms such as $Y_{t}$, $DY_{t}$, which are solutions to fractional stochastic differential equations.

Therefore, in our approach we have three types of error in the computation of the MLE: the error of the stochastic approximation algorithm, the Monte-Carlo error and the discretization bias introduced by the Euler approximation for the stochastic differential equations. Our aim here is to combine the Monte Carlo and Euler approximations in an optimal way in order to get a global error bound for the computation of $\nabla_{l} \ell_{n}(\theta)$.

\subsection{Pathwise convergence of the Euler scheme}

The Euler scheme is the main source of error in our computations. There is always a trade-off between the number of Euler steps and the number of simulations, but what is usually computationally costly is the number of Euler steps. This is even worse when we deal with fractional SDEs, since the rate of convergence depends on $H$ and the closer the value of $H$ to $1/2$, the more steps are required for the simulation.

In this section, we compute the magnitude of the discretization error we introduce. We measure the bias of the Euler scheme via the root mean square error. That is,  we want to estimate the quantity $\sup_{\tau\in[0,T]}( \mathbf{E} |Y_{\tau}(\theta) - \bar{Y}_{\tau}^{M}(\theta) |^{2})^{1/2}$,
where $Y_t(\theta)$ is the solution to the SDE (\ref{eq:sde}) and $\bar{Y}_{\tau}^{M}(\theta)$ is the Euler approximation of $Y_{\tau}(\theta)$ given on the grid $\{\tau_k;\, k\le M\}$ by
\begin{equation}\label{eq:euler}
\bar{Y}_{\tau_{k+1}}^{M} (\theta)= \bar{Y}_{\tau_{k}}^{M}(\theta) + \mu(\bar{Y}_{\tau_{k}}^{M}(\theta);\theta) (\tau_{k+1}-\tau_{k}) + \sum_{j=1}^{d} \sigma^{j}(\bar{Y}_{\tau_{k}}^{M}(\theta);\theta) \delta B^{M,j}_{\tau_{k} \tau_{k+1}},
\end{equation}
in which we denote $\delta B^{M,j}_{\tau_{k} \tau_{k+1}} =  B_{\tau_{k+1}}^{M,j} - B_{\tau_{k}}^{M,j}$ and  $\tau_{k} = \frac{kT}{M}$ for $k = 0,\ldots, M-1$. Notice that those estimates can be found in \cite{DNT,FV,MS}. We include their proof here because it is simple enough, and also because they can be easily generalized to the case of a linear equation. This latter case is of special interest for us, since it corresponds to Malliavin derivatives, and is not included in the aforementioned references.
\begin{notation}
For simplicity, in this section we write $Y:=Y(\theta)$.
\end{notation}
\begin{proposition}\label{prop:EulerSDE}
Let $T>0$ and recall that $\bar{Y^{M}}$ is defined by equation (\ref{eq:euler}). Then, there exists a random variable $C$ with finite $L^{p}$ moments such that for all $\gamma < H$ and $H>1/2$ we have
\begin{equation}\label{eq:error1}
\|Y_t - \bar{Y} \|_{\gamma,T} \leq C_T\; M^{1-2\gamma}
\end{equation}
Consequently, we obtain that the MSE is of order ${\mathcal{O}}(M^{1-2\gamma})$.
\end{proposition}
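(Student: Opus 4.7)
The argument splits into three stages: a uniform-in-$M$ H\"older bound for a continuous-time Euler interpolation, a local discretization estimate per Euler step based on the sharp Young inequality, and a H\"older-norm Gronwall closure. As a first step I would extend $\bar Y^M$ to all of $\ott$ as the solution of the Young SDE
\begin{equation*}
\bar Y_t^M = a + \iot \mu(\bar Y_{\eta_M(s)}^M;\te)\,ds + \sum_{j=1}^d \iot \si^j(\bar Y_{\eta_M(s)}^M;\te)\, dB_s^j,
\end{equation*}
with $\eta_M(s)=\tau_k$ for $s\in[\tau_k,\tau_{k+1})$. Running the fixed-point argument behind Proposition \ref{prop:moments-sdes} with these delayed coefficients yields that $\|\bar Y^M\|_{\ga,T}$ is controlled, uniformly in $M$, by a random constant with polynomial dependence on $\|B\|_{\ga,T}$ (and hence with all $L^p$ moments). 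Crucially, this gives the pointwise bound $|\bar Y^M_s-\bar Y^M_{\eta_M(s)}|\le \|\bar Y^M\|_{\ga,T}(T/M)^{\ga}$ on each Euler step.

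Next, for $e_t:=Y_t-\bar Y^M_t$ I would write the error equation and decompose each coefficient difference as
\begin{equation*}
\vp(Y_s)-\vp(\bar Y^M_{\eta_M(s)})
=\bigl[\vp(Y_s)-\vp(\bar Y^M_s)\bigr]+\bigl[\vp(\bar Y^M_s)-\vp(\bar Y^M_{\eta_M(s)})\bigr],
\qquad \vp\in\{\mu,\si^j\}.
\end{equation*}
The first bracket is controlled by $\|\nabla\vp\|_\infty |e_s|$ and generates a Gronwall term. For the second bracket, the Young integral against $dB^j$ on each subinterval $[\tau_k,\tau_{k+1}]$ is treated via the sharp estimate (\ref{eq:ineq-young-sharp}): the left-endpoint term $|g_{\tau_k}|\|B\|_\ga(1/M)^\ga$ vanishes because $\bar Y^M_{\tau_k}=\bar Y^M_{\eta_M(\tau_k)}$ makes the integrand vanish at $\tau_k$, so only the higher-order piece survives, of order $\|B\|_\ga \|\bar Y^M\|_\ga M^{-2\ga}$ per step. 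Summing over the $M$ Euler steps yields a global local-error budget of order $M\cdot M^{-2\ga}=M^{1-2\ga}$.

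Finally, to close the estimate in H\"older norm I would subdivide $\ott$ into pieces of length $\Delta$ chosen so that $\|B\|_\ga \Delta^\ga$ is smaller than an explicit constant depending only on the $\cac_b^2$ norms of $\mu,\si$, exactly as in Proposition \ref{prop:moments-sdes}(3). On each such piece the Lipschitz term becomes a strict contraction on the $\ga$-H\"older seminorm of $e$, while the discretization piece from the previous step survives as a forcing of order $M^{1-2\ga}$. Concatenating over the finitely many subintervals produces the announced bound, with a random constant $C_T$ polynomial in $\|B\|_{\ga,T}$; the MSE statement is then immediate from $|e_t|\le T^\ga\|e\|_{\ga,T}$ (using $e_0=0$) together with Fernique-type moment bounds for $\|B\|_{\ga,T}$. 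The main obstacle is precisely this H\"older-norm closure: a naive pointwise Gronwall argument on $|e_{\tau_k}|$ only yields a supremum bound, whereas the sharp Young estimate forces simultaneous control of $\|e\|_{\ga}$, and this is resolved by the interval-subdivision strategy just outlined.
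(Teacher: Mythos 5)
Your proposal is correct in outline, but it takes a genuinely different route from the paper. The paper runs the classical flow (``Lady Windermere's fan'') argument: it introduces the exact flow $\Phi$ and the one-step numerical flow $\Psi$, bounds the one-step error $r_k$ by $c_{\mu,\sigma}\|B\|_\gamma^{1+1/\gamma}(T/M)^{2\gamma}$ via the Young estimate (exploiting, as you do, that the integrand vanishes at the left endpoint of each step), and then telescopes $\bar Y^M_{\tau_q}-Y_{\tau_q}=\sum_k\bigl(\Phi(\bar Y^M_{\tau_k};\tau_k,\tau_q)-\Phi(\bar Y^M_{\tau_{k+1}};\tau_{k+1},\tau_q)\bigr)$, propagating each $r_k$ through the Lipschitz estimate for the exact flow provided by Proposition \ref{prop:moments-sdes}(3), before passing to H\"older norms on the grid and lifting to $\ott$. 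You instead interpolate the scheme continuously, write an error equation, and close it by a H\"older-norm contraction/Gronwall argument on subintervals chosen so that $\|B\|_{\gamma,T}\Delta^\gamma$ is small. Both routes rest on the same local $M^{-2\gamma}$ estimate; the difference lies in how local errors are aggregated. The flow approach gets global stability for free from Proposition \ref{prop:moments-sdes}(3) and never needs a uniform-in-$M$ H\"older bound for the scheme, whereas your route requires that extra a priori bound for the delayed equation (true under Hypothesis \ref{hyp:coeff-sde}, but it must be established) and in exchange yields a genuinely continuous-time comparison with no separate lifting step; it also makes the absorption of the Lipschitz bracket explicit, which is where the $\cac_b^2$ assumption enters. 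One caveat: your final constant is not polynomial in $\|B\|_{\gamma,T}$ --- concatenating over the roughly $T\|B\|_{\gamma,T}^{1/\gamma}$ subintervals produces a factor of order $\exp\lp c\|B\|_{\gamma,T}^{1/\gamma}\rp$, exactly the subexponential growth appearing in the paper --- but since $1/\gamma<2$, Fernique's theorem still gives the finite $L^p$ moments needed for the MSE claim, so the conclusion is unaffected.
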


\begin{proof}
In order to prove (\ref{eq:error1}) we apply techniques of the classical numerical analysis for the flow of an ordinary differential equation driven by a smooth path. Namely, the exact flow of (\ref{eq:sde}) is given by $\Phi(y; s, t) := Y_{t}$, where $Y_{t}$ is the unique solution of (\ref{eq:sde}) when $t\in [s,T]$ and the initial condition is $Y_{s} = y$. Introduce also the numerical flow
\begin{equation}\label{eq:numFlow}
\Psi(y; \tau_{k}, \tau_{k+1}) := y + \mu(y) (\tau_{k+1}-\tau_{k}) + \sum_{j=1}^{d} \sigma^{j}(y) \delta B^{M,j}_{\tau_{k} \tau_{k+1}},
\end{equation}
where $\tau_{k} = \frac{kT}{M}$, $k=0,\ldots,M-1$. Thus, we can write that
\begin{eqnarray*}
\bar{Y}_{\tau_{k+1}}^{M} & = & \Psi\Bigl(\bar{Y}_{\tau_{k}}^{M};\; \tau_{k}, \tau_{k+1}\Bigr),\; k=0, \ldots, M-1\\
Y_{0}^{M} & = & \alpha.
\end{eqnarray*}
For $q>k$ we also have that
\begin{equation*}
\Psi(y; \tau_{k}, \tau_{q}) := \Psi(\cdot; \tau_{q-1}, \tau_{q})\circ \Psi(\cdot; \tau_{q-2}, \tau_{q-1})\circ \ldots \circ \Psi(y; \tau_{k}, \tau_{k+1}).
\end{equation*}
The one-step error computes as
\begin{eqnarray}\label{eq:r_k}
r_{k} &=& \Phi(y; \tau_{k}, \tau_{k+1}) - \Psi(y; \tau_{k}, \tau_{k+1}) \notag\\
&=& \int_{\tau_{k}}^{\tau_{k+1}} \Bigl[ \mu(Y_{s}) - \mu(y) \Bigr] ds + \int_{\tau_{k}}^{\tau_{k+1}} \Bigl[ \sigma(Y_{s}) - \sigma(y) \Bigr] dB_{s}
\end{eqnarray}
Furthermore, since $Y\in {\mathcal{C}}^{\gamma}$ and $B \in {\mathcal{C}}^{\gamma}$ for $\gamma >1/2$, using (\ref{y}) we have
\begin{eqnarray*}
\Bigl|\int_{\tau_{k}}^{\tau_{k+1}} \Bigl[ \sigma(Y_{s}) - \sigma(y) \Bigr] dB_{s} \Bigr| 
&\leq & c_{ \gamma}\;\|\partial \sigma\|_{\infty} \|Y\|_{\gamma}\;\|B\|_{\gamma} \; \left|\frac{T}{M}\right|^{2\gamma} \\
&\leq & c_{\gamma,\sigma}\;\|\partial \sigma\|_{\infty}\; \|B\|_{\gamma}^{1/\gamma}\;\|B\|_{\gamma} \; \left|\frac{T}{M}\right|^{2 \gamma},
\end{eqnarray*}
where we used the fact that $\|Y\|_{\gamma} \leq c_{\sigma}\|B\|_{\gamma}^{1/\gamma}$ (see Proposition \ref{prop:moments-sdes}). 
Similarly, for the drift part we have
\begin{eqnarray*}
\Bigl |\int_{\tau_{k}}^{\tau_{k+1}} \Bigl[ \mu(Y_{s}) - \mu(y) \Bigr] ds \Bigr |  &\leq& c_{\gamma}\;\|\partial\mu\|_{\infty}\;\|Y\|_{\gamma} \; \left|\frac{T}{M}\right|^{\gamma+1}\\
& \leq & c_{\gamma,\mu}\;\|\partial\mu\|_{\infty}\;\|B\|_{\gamma}^{1/\gamma} \; \left|\frac{T}{M}\right|^{\gamma+1}.
\end{eqnarray*}
Therefore, the one-step error (\ref{eq:r_k}) satisfies
\begin{equation}\label{eq:bnd-r-k}
|r_{k}| \leq  c_{\mu,\sigma}\; \|B\|_{\gamma}^{1+1/\gamma}\; \left|\frac{T}{M}\right|^{2\gamma}.
\end{equation}
Now, we can write the classical decomposition of the error in terms of the exact and numerical flow. Since $\bar{Y}_{\tau_{k}}^{M} = \Phi(\bar{Y}_{\tau_k}^{M}; \tau_{k}, \tau_{k})$ and $Y_{\tau_{k}} = \Phi(\bar{Y}_{\tau_0}^{M}; \tau_{0}, \tau_{k})$ we have
\begin{equation}\label{eq:error}
\bar{Y}_{\tau_{q}}^{M} - Y_{\tau_{q}} = \Phi(\bar{Y}_{\tau_{0}}; \tau_{0}, \tau_{k}) -  \Phi(\bar{Y}_{\tau_{q}}; \tau_{q}, \tau_{q}) = \sum_{k=0}^{q-1} \Bigl(  \Phi(\bar{Y}_{\tau_{k}}^{M}; \tau_{k}, \tau_{q}) -  \Phi(\bar{Y}_{\tau_{k+1}}; \tau_{k+1}, \tau_{q}) \Bigr).
\end{equation}
Since 
$ \Phi\Bigl(\bar{Y}_{\tau_{k}}^{M}; \tau_{k}, \tau_{q}\Bigr) = \Phi\Bigl(  \Phi(\bar{Y}_{\tau_{k}}^{M}; \tau_{k}, \tau_{k+1}); \tau_{k+1}, \tau_{q}\Bigr)$
we obtain
\begin{eqnarray*}
\Bigl| \Phi(\bar{Y}_{\tau_{k}}^{M}; \tau_{k}, \tau_{q}) -  \Phi(\bar{Y}_{\tau_{k+1}}^{M}; \tau_{k+1}, \tau_{q})\Bigr| &=&
\Bigl| \Phi\Bigl(  \Phi(\bar{Y}_{\tau_{k}}^{M}; \tau_{k}, \tau_{q}); \tau_{k+1}, \tau_{q}\Bigr) -  \Phi(\bar{Y}_{\tau_{k+1}}^{M}; \tau_{k+1}, \tau_{q})\Bigr| \\
&\leq & C_{T}(\|B\|_{\gamma})\; | \Phi(\bar{Y}_{\tau_{k}}^{M}; \tau_{k}, \tau_{k+1}) - \bar{Y}_{\tau_{k+1}}^{M}|,
\end{eqnarray*}
where we have used the fact that 
\[| \Phi(\alpha; t, s) - \Phi(\beta;  t, s)\leq C_{T}(\|B\|_{\gamma}) |\alpha-\beta|,\]
where $C_{T}$ is a subexponential function (see Proposition \ref{prop:moments-sdes} again). Moreover, owing to relation (\ref{eq:bnd-r-k}),
\begin{equation}\label{eq:r_k2}
| \Phi(\bar{Y}_{\tau_{k}}^{M}; \tau_{k}, \tau_{q}) -  \bar{Y}_{\tau_{k+1}}^{M}| = | r_k | 
\leq c_{\mu,\sigma}\; \|B\|_{\gamma}^{1+1/\gamma}\; \left|\frac{T}{M}\right|^{2 \gamma}.
\end{equation}
Therefore, replacing (\ref{eq:r_k2}) in (\ref{eq:error}) for any $q \leq n$ we obtain 
\begin{eqnarray*}
| \bar{Y}_{\tau_{q}}^{M} - Y_{\tau_{q}} | &\leq &  c_{\mu,\sigma}\;  \|B\|_{\gamma}^{1+1/\gamma}\; \sum_{k=0}^{q-1} \left|\frac{T}{M}\right|^{2 \gamma}
\end{eqnarray*}
Let us push forward this analysis to H\"older type norms on the grid $0 \leq \tau_{1}<\ldots < \tau_{n}=T$. We have  for $q\geq p$
\begin{eqnarray*}
&& \delta\Bigl(Y-\bar{Y}^{M}\Bigr)_{\tau_{p} \tau_{q}} \\
&=& \Bigl( \Phi(Y_{\tau_{p}}; \tau_{p}, \tau_{q}) - Y_{\tau_{p}}\Bigr) - \Bigl( \Psi(\bar{Y}_{\tau_{p}}^{M}; \tau_{p}, \tau_{q}) - \bar{Y}_{\tau_{p}}^{n}\Bigr)\\
&=&  \Bigl( \Phi(Y_{\tau_{p}}; \tau_{p}, \tau_{q}) -Y_{\tau_{p}}\Bigr) - \Bigl( \Phi(\bar{Y}_{\tau_{p}}^{M}; \tau_{p}, \tau_{q}) - \bar{Y}_{\tau_{p}}^{M}\Bigr) - \Bigl( \Psi(\bar{Y}_{\tau_{p}}^{M}; \tau_{p}, \tau_{q}) - \Phi(\bar{Y}_{\tau_{p}}^{M}; \tau_{p}, \tau_{q})\Bigr)\\
&=&  \biggl( \Bigl( \Phi(Y_{\tau_{p}}; \tau_{p}, \tau_{q}) - \Phi(\bar{Y}_{\tau_{p}}^{M}; \tau_{p}, \tau_{q}) \Bigr) - \Bigl( Y_{\tau_{p}} - \bar{Y}_{\tau_{p}}^{M} \Bigr) \biggr) 
- \Bigl( \Psi(\bar{Y}_{\tau_{p}}^{M}; \tau_{p}, \tau_{q}) - \Phi(\bar{Y}_{\tau_{p}}^{M}; \tau_{p}, \tau_{q})\Bigr).
\end{eqnarray*}
Similar to the calculations leading to (\ref{eq:r_k2}) we obtain
\begin{equation*}
\Bigl| \Psi(\bar{Y}_{\tau_{p}}^{M}; \tau_{p}, \tau_{q}) - \Phi(\bar{Y}_{\tau_{p}}^{M}; \tau_{p}, \tau_{q})\Bigr| \leq  c_{\mu,\sigma}\;  \|B\|_{\gamma}^{1+1/\gamma}\;\sum_{k=p}^{q-1} \left|\frac{T}{M}\right|^{2\gamma}.
\end{equation*}
Moreover, owing to Proposition \ref{prop:moments-sdes} part (2), observe that
\begin{equation*}
\frac{\Bigl| \Bigl( \Phi(Y_{\tau_{p}}; \tau_{p}, \tau_{q}) - \Phi(\bar{Y}_{\tau_{p}}^{M}; \tau_{p}, \tau_{q}) \Bigr) - \Bigl( Y_{\tau_{p}} - \bar{Y}_{\tau_p}^{M} \Bigr) \bigr|}{|\tau_{q}-\tau_{p}|^{\gamma}} 
\leq c(\|B\|_{\gamma})\;|Y_{\tau_{p}}-\bar{Y}_{\tau_{p}}^{M}|.
\end{equation*}
Consequently, we have that for $0 \leq p < q \leq M$
\begin{equation*}
\Bigl|\delta\Bigl(Y-\bar{Y}^{M}\Bigr)_{\tau_{p} \tau_{q}}\Bigr| \leq  c'(\|B\|_{\gamma}^{1+1/\gamma})  \Bigl\{ \sum_{k=p}^{q-1} \left|\frac{T}{M}\right|^{2 \gamma} +\;|\tau_{q} - \tau_{p}|^{\gamma} \sum_{k=0}^{q} \left|\frac{T}{M}\right|^{2 \gamma} \Bigr\}
\end{equation*}
which easily yields that 
\begin{equation*}
\sup_{p,q=0,1,\ldots,M-1, p\neq q} \frac{\Bigl|\delta\Bigl(Y-\bar{Y}^{M}\Bigr)_{\tau_{p} \tau_{q} } \Bigr|} {|\tau_{p}-\tau_{q}|^{\gamma}} \leq c(\|B\|_{\gamma})\; M^{1-2\;\gamma}.
\end{equation*}
By ``lifting'' this error estimate to $[0,T]$ and since $|t-s| \leq T/M$, 
\begin{equation}
\|Y_t - \bar{Y} \|_{\gamma,\infty,T} \leq  C\;M^{1-2\;\gamma},
\end{equation}
which concludes the first part of the proof. 

Regarding the order of the Mean Square Error, it suffices to note that the constant $C$ has finite $L^{p}$ moments.
\end{proof}

As mentioned before, an elaboration of Proposition \ref{prop:EulerSDE} is needed in the sequel. Indeed, in the expression of the log-likelihood in Proposition \ref{prop:loglik} we need to discretize more complicated quantities of the underlying process, such as (\ref{eq:higher-deriv-Y-t}) or (\ref{eq:sde-ga-inverse}). To this aim, let us notice first that all those equations can be written under the following generic form: 
\begin{equation}\label{eq:eqZ}
Z_{t} = \alpha + \int_{0}^{t} \xi_{u}^{2} Z_{u}   du + \int_{0}^{t}  \xi_{u}^{1,j} Z_{t} dB_{u}^{j},
\end{equation}
where  $\xi^1$, $\xi^{2}$ are stochastic processes with bounded moments of any order. The corresponding Euler discretization is
\begin{equation}\label{eq:Zeuler}
\bar{Z}_{\tau_{k}}^{M} = \bar{Z}_{\tau_{k}}^{M} + \xi_{\tau_{k}}^{2} \bar{Z}_{\tau_{k}}^{M} (\tau_{k+1}-\tau_{k}) + \sum_{j=1}^{d} \xi_{\tau_{k}}^{1,j} \bar{Z}_{\tau_{k}} \; \delta B_{\tau_{k}\tau_{k+1}}^{j,M},
\end{equation}
and we give first an approximation result in this general context:
\begin{proposition}\label{prop:ZEuler}
Let $T>0$, and consider the $\R^q$-valued solution $Z$ to equation (\ref{eq:eqZ}), where $\al\in\R^q$, $\xi^{2}, \xi^{1,j}\in\R^{q,q}$ and we suppose that $\|\xi^{2}\|_{\ga}$ and $\|\xi^{1,j}\|_{\ga}$ belong to $L^p(\oom)$ for any value of $p\ge 1$. Let $\bar{Z}^{M}$ be defined by equation (\ref{eq:Zeuler}). Then, there exists a  random variable $C^{'}$ with $L^p$ finite moments, such that for all $\gamma < H$ and $H>1/2$ we have
\begin{equation}
\|Z - \bar{Z} \|_{\gamma,T} \leq C^{'}_T\; M^{1-2\gamma}
\end{equation}
Consequently, we obtain that the Mean Square Error is of order ${\mathcal{O}}(M^{1-2\gamma})$.
\end{proposition}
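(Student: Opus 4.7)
The plan is to mimic, step by step, the argument given for Proposition \ref{prop:EulerSDE}, only replacing the (deterministic, bounded) coefficients $\mu$ and $\sigma$ by the random, linear-in-$Z$ coefficients appearing in (\ref{eq:eqZ}). First I would introduce the exact flow $\Phi(z;s,t)$ of equation (\ref{eq:eqZ})---defined as the value at time $t$ of the solution started from $z$ at time $s$---and the numerical flow
$$
\Psi(z;\tau_k,\tau_{k+1}) = z + \xi_{\tau_k}^{2}\, z \,(\tau_{k+1}-\tau_k) + \sum_{j=1}^d \xi_{\tau_k}^{1,j}\, z \,\delta B^{j,M}_{\tau_k\tau_{k+1}}.
$$
A preliminary Gronwall-type estimate for Young equations of linear type gives the a priori bound $\|Z\|_{\gamma,T}\le c\,(1+|\alpha|)\,\exp\!\bigl(c\,(\|\xi^1\|_{\gamma,T}+\|\xi^2\|_{\gamma,T})\,\|B\|_{\gamma,T}^{1/\gamma}\bigr)$, together with a Lipschitz estimate of the same flavor for $|\Phi(a;s,t)-\Phi(b;s,t)|$ in terms of $|a-b|$. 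Since $\|B\|_{\gamma,T}$ has Gaussian tails (Fernique) and $\|\xi^i\|_{\gamma,T}\in L^p(\Omega)$ for every $p$, these bounds all lie in $L^p(\Omega)$ for every $p\ge 1$ by H\"older's inequality.

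Second, I would control the one-step error
$$
r_k = \int_{\tau_k}^{\tau_{k+1}}\!\bigl[\xi_u^{2} Z_u - \xi_{\tau_k}^{2} z\bigr]\,du + \sum_{j=1}^d\int_{\tau_k}^{\tau_{k+1}}\!\bigl[\xi_u^{1,j} Z_u - \xi_{\tau_k}^{1,j} z\bigr]\,dB_u^{j},
$$
evaluated at $z=\bar Z_{\tau_k}^{M}$. Using the sharp Young estimate (\ref{eq:ineq-young-sharp}), the leading boundary term $|\xi_{\tau_k}^{1,j} z|\,\|B\|_\gamma|\tau_{k+1}-\tau_k|^\gamma$ is exactly what $\Psi$ already provides, so only a remainder of order $|T/M|^{2\gamma}$ survives. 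This yields $|r_k|\le C\,|T/M|^{2\gamma}$, where $C$ depends polynomially on $\|\xi^1\|_\gamma$, $\|\xi^2\|_\gamma$, $\|Z\|_\gamma$ and $\|B\|_\gamma$, hence has finite moments of every order by the previous step.

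Third, I would repeat verbatim the telescoping decomposition (\ref{eq:error}), applying the Lipschitz bound on $\Phi$ in the initial condition in order to propagate the $M$ one-step errors into the pointwise estimate $|\bar Z_{\tau_q}^{M}-Z_{\tau_q}|\lesssim M^{1-2\gamma}$. Promoting this to a H\"older-norm estimate on the grid via the same exact-vs-numerical-flow decomposition used to handle $\delta(Y-\bar Y^M)_{\tau_p\tau_q}$ in Proposition \ref{prop:EulerSDE}, and then lifting to the whole interval $[0,T]$ (via $|t-s|\le T/M$ for $t,s$ in the same cell), gives (\ref{eq:error1}) with a new random constant $C'_T$. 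The Mean Square Error bound then follows by taking $L^2$ norms.

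The main obstacle is the integrability of $C'_T$. In the nonlinear but bounded-coefficient setting of Proposition \ref{prop:EulerSDE}, Proposition \ref{prop:moments-sdes} delivered the crucial subexponential control on $\|Y\|_\gamma$ for free. For the linear equation (\ref{eq:eqZ}) one instead relies on a Young-Gronwall argument that produces a bound of the form $\exp\!\bigl(c\,\|B\|_{\gamma}^{1/\gamma}\,N(\xi^1,\xi^2)\bigr)$, with $N$ a polynomial in the H\"older norms of the $\xi^i$. Since $1/\gamma<2$ for $\gamma>1/2$, Fernique's theorem guarantees that the exponential in $\|B\|_\gamma^{1/\gamma}$ has finite moments of every order, and H\"older's inequality combined with the assumed $L^p$-control on $\|\xi^i\|_\gamma$ yields the required integrability of $C'_T$. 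This closes the proof.
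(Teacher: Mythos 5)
Your proposal is correct and follows essentially the same route as the paper's own proof: the exact-flow/numerical-flow decomposition of Proposition \ref{prop:EulerSDE}, a one-step error controlled by the Young estimate together with the exponential a priori bound on $\|Z\|_{\gamma}$, the telescoping argument and H\"older-norm lift, and integrability of the random constant via Fernique combined with the $L^p$ assumptions on the $\xi^{i}$. If anything, you are slightly more explicit than the paper about the coefficients being frozen at the gridpoints in the numerical flow and about tracking the $\xi$-dependence inside the exponential bound.
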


\begin{proof}
We follow a similar approach as in the previous proposition. Thus, the exact flow is equal to $\Phi(\zeta; s, t):= Z_{t}$, where $Z_{t}$ is the unique solution of equation (\ref{eq:eqZ}) when $t\in[s,T]$ and the initial condition is $Z_{s} = \zeta$.  Consider also the numerical flow
\begin{equation*}
\Psi(\zeta; \tau_{k}, \tau_{k+1}) := \zeta + \xi_{u}^{2} \zeta (\tau_{k+1}-\tau_{k}) + \sum_{j=1}^{d} \xi_{u}^{1,j} \zeta \delta B_{\tau_{k}\tau_{k+1}}^{j,M},
\end{equation*}
where $\tau_{k} = kT/M$, $n=0,\ldots,M-1$. Thus, we have
\begin{eqnarray*}
\bar{Z}_{\tau_{k+1}}^{M} &=& \Psi(\bar{Z}_{\tau_{k}}^{M}; \tau_{k+1}, \tau_{k}), \;k=0,\ldots,M-1\\
\bar{Z}_{0}^{M} &=& \alpha.
\end{eqnarray*}
In this case, the one-step error can be written as
\begin{eqnarray*}
r_{k} &=&  \Phi(\zeta; \tau_{k}, \tau_{k+1}) - \Psi(\zeta; \tau_{k}, \tau_{k+1})\\
&=& \int_{\tau_{k}}^{\tau_{k+1}} \xi_{u}^{2} (Z_{s} - \zeta) du + \int_{\tau_{k}}^{\tau_{k+1}} \xi_{u}^{1} (Z_{s} - \zeta) dB_{u}
\end{eqnarray*}
We now treat each term separately. Therefore, using the fact that $\|Z\|_{\ga} \leq \exp(c\|B\|_{\gamma}^{1/\gamma}) $, which is recalled at Proposition \ref{prop:moments-sdes} point (4) in a slightly different context, we have that
\begin{eqnarray*}
\Bigl|\int_{\tau_{k}}^{\tau_{k+1}}  \xi_{s}^{1}(Z_{s} - \zeta) dB_{s} \Bigr| 
&\leq & c_{ \gamma}\; \|Z \xi^{1}\|_{\gamma}\;\|B\|_{\gamma} \; \left|\frac{T}{M}\right|^{2\gamma} \\
&\leq & c_{\gamma}\; \|\exp(\|B\|_{\gamma}^{1/\gamma})\;\|B\|_{\gamma} \; \left|\frac{T}{M}\right|^{2 \gamma}.
\end{eqnarray*}
Similarly, we also have
\begin{eqnarray*}
\Bigl|\int_{\tau_{k}}^{\tau_{k+1}}  \xi_{s}^{2}(Z_{s} - \zeta) d{s} \Bigr| 
&\leq & c_{ \gamma}\; \|Z \xi^{2}\|_{\gamma}\;\|B\|_{\gamma} \; \left|\frac{T}{M}\right|^{2\gamma} \\
&\leq & c_{\gamma}\; \|\exp(\|B\|_{\gamma}^{1/\gamma})\;\|B\|_{\gamma} \; \left|\frac{T}{M}\right|^{2 \gamma}.
\end{eqnarray*}
Therefore, the one-step error satisfies the following inequality
\begin{equation*}
|r_{k}| \leq c_{\gamma} \exp(\|B\|_{\gamma}^{1/\gamma})\;\|B\|_{\gamma} \; \left|\frac{T}{M}\right|^{2 \gamma}.
\end{equation*}

Along the same lines as for Proposition \ref{prop:EulerSDE}, the decomposition of the error in terms of the exact and numerical flow becomes
\begin{equation*}
\bar{Z}^{M}_{\tau_{q}} - Z_{\tau_{q}} = \Phi(\bar{Z}^{M}_{\tau_{q}}; \tau_{q}, \tau_{q})-\Phi(\bar{Z}^{M}_{\tau_{0}}; \tau_{0}, \tau_{k}) = 
\sum_{k=0}^{q-1} 
\Bigl(  \Phi(\bar{Z}^{M}_{\tau_{k+1}}; \tau_{k+1}, \tau_{q})-\Phi(\bar{Z}_{\tau_{k}}^{M}; \tau_{k}, \tau_{q})    \Bigr),
\end{equation*}
and the same inequalities allowing to go from (\ref{eq:error}) to (\ref{eq:r_k2}) yield
\begin{eqnarray*}
| \bar{Z}_{\tau_{q}} - Z_{\tau_{q}} | &\leq &  c_{\gamma}\;  \left| \frac{T}{M}\right|^{2 \gamma}.
\end{eqnarray*}
The claim of the proposition follows now as in Proposition \ref{prop:EulerSDE}.

\end{proof}

We now use the previous proposition in order to approximate the kernels of the expectations in $\nabla_{l}\ell_{n}(\theta)$. Let us first introduce the following notation:
\begin{notation}
Let $W_{i}(\theta)$, $V_{i}(\theta)$ as in (\ref{eq:expr-W-i}) and (\ref{eq:expr-V-i}) respectively and define $w_{i}(\theta)$ and $v_{i}(\theta)$ as
\begin{eqnarray}
w_i(\te)&=&\mathbf{1}_{(Y_{t_{i}}(\theta)>y_{t_{i}})}\; H_{(1,\ldots,m)}\Bigl( Y_{t_{i}}(\theta) \Bigr) \label{eq:expr-w-i}\\
v_i(\te)&=& \nabla_{l}Y_{t_{i}}(\theta) \; \mathbf{1}_{(Y_{t_{i}}(\theta)>y_{t_{i}})} \;H_{(1,.,m,1,., m)} 
+ \Bigl( Y_{t_{i}}(\theta) - y_{t_{i}} \Bigr)_{+} \nabla_{l}H_{(1,.,m,1,., m)}. \label{eq:expr-v-i}
\end{eqnarray}
Let also $\bar{w}_{i}^{M}$ and $\bar{v}_{i}^{M}$ to be the Euler discretized versions of (\ref{eq:expr-w-i}) and (\ref{eq:expr-v-i}) using \ref{prop:ZEuler}, and set $\bar W_{i}^{M}(\theta)=\be[\bar{w}_{i}^{M}]$ and $\bar V_{i}^{M}(\theta)=\be[\bar{v}_{i}^{M}]$.
\end{notation}
Our convergence result for $\nabla_{l}\ell_{n}(\theta)$ can be read as follows:
\begin{theorem}\label{thm:cvgce-Vi-Wi}
Recall from Theorem \ref{prop:loglik} that $\nabla_{l}\ell_n(\te)$ can be decomposed as $\nabla_{l}\ell_n(\te) =\sum_{i=1}^{n}\frac{V_i(\te)}{W_i(\te)}$. Then the following approximation result holds true:
\begin{equation*}
\lln V_i(\te)- \bar V_{i}^{M}(\theta)\rrn + \lln W_i(\te)- \bar W_{i}^{M}(\theta)\rrn
\le \frac{c}{M^{2\ga -1}},
\end{equation*}
for a strictly positive constant $c$.
\end{theorem}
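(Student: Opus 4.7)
The strategy is to propagate the pathwise Euler approximation error --- guaranteed by Proposition \ref{prop:ZEuler} --- from the underlying linear SDEs that govern the Malliavin derivatives of $Y(\te)$ up to the functionals $H$, and then to transform pathwise $L^{p}$-convergence into convergence of the indicator-weighted expectations $W_{i}$ and $V_{i}$.

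\smallskip

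\textbf{Propagation through the system.} Every random quantity appearing in (\ref{eq:expr-w-i})--(\ref{eq:expr-v-i}) is built from the following list: $Y(\te)$ itself, its Malliavin derivatives $D^{i_{1},\ldots,i_{n}}Y(\te)$ of arbitrary order, the inverse Malliavin matrix $\eta(\te)$, its Malliavin derivatives, their $\te$-derivatives $\nabla_{l}Y$, $\nabla_{l}D^{(n)}Y$, $\nabla_{l}\eta$, plus Young integrals and Riemann convolutions of these quantities (\emph{cf.} Lemmas \ref{lem:higher-mall-deriv-Y}, \ref{lem:higher-mall-deriv-nabla-Y}, \ref{lem:mall-drv-nabla-eta} and Propositions \ref{prop:dif-theta-Y}, \ref{prop:mall-drv-eta}). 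After vectorization, each satisfies a linear SDE of the generic form (\ref{eq:eqZ}) whose coefficients $\xi^{1},\xi^{2}$ are smooth expressions in lower-order terms; a recursion on the order of differentiation, Hypothesis \ref{hyp:coeff-sde2} and Proposition \ref{prop:moments-sdes} show these coefficients are $\ga$-H\"older with norms in $\bigcap_{p\ge 1}L^{p}(\oom)$. Iterating Proposition \ref{prop:ZEuler}, each such process $Z$ and its Euler approximation $\bar Z^{M}$ satisfy
\begin{equation*}
\|Z-\bar Z^{M}\|_{\ga,T}\le C_{T}(\omega)\,M^{1-2\ga},\qquad C_{T}\in\bigcap_{p\ge 1}L^{p}(\oom).
\end{equation*}
Next, by Proposition \ref{prop:def-H-j-young}, each $H_{(j_{1},\ldots,j_{n})}(Y_{t_{i}}(\te))$ is a composition $U_{j_{n}}\circ\cdots\circ U_{j_{1}}$ built from Young integrals and Riemann correction terms; the inductive formula (\ref{eq:der-k-K}) of Proposition \ref{prop:comput-H}, together with the sharp Young estimate (\ref{eq:ineq-young-sharp}) (which gives local Lipschitz continuity of the Young integral in the $\ga$-H\"older norm of its integrands), propagates the $L^{p}$-error through these finitely many compositions. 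Combined with Lemma \ref{lem:nabla-U-p} for the $\te$-derivative, this yields for every $p\ge 1$
\begin{equation*}
\bigl\|H_{(1,\ldots,m)}(Y_{t_{i}})-H^{M}_{(1,\ldots,m)}(\bar Y_{t_{i}}^{M})\bigr\|_{L^{p}}+\bigl\|\nabla_{l}H_{(1,\ldots,m,1,\ldots,m)}-\nabla_{l}H^{M}_{(1,\ldots,m,1,\ldots,m)}\bigr\|_{L^{p}}\le c_{p}\,M^{1-2\ga}.
\end{equation*}

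\smallskip

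\textbf{From pathwise to integrated error.} Decompose
\begin{equation*}
W_{i}-\bar W_{i}^{M}=\be\bigl[\1_{(Y_{t_{i}}>y_{t_{i}})}(H-H^{M})\bigr]+\be\bigl[(\1_{(Y_{t_{i}}>y_{t_{i}})}-\1_{(\bar Y_{t_{i}}^{M}>y_{t_{i}})})H^{M}\bigr].
\end{equation*}
The first summand is bounded by $\|H-H^{M}\|_{L^{1}}\le c\,M^{1-2\ga}$. For the second, the symmetric difference of events lies in $\{|Y_{t_{i}}-y_{t_{i}}|\le|Y_{t_{i}}-\bar Y_{t_{i}}^{M}|\}$, whose probability, for any $\ep>0$ and $r\ge 1$, is at most
\begin{equation*}
\bp\bigl(|Y_{t_{i}}-y_{t_{i}}|\le\ep\bigr)+\bp\bigl(|Y_{t_{i}}-\bar Y_{t_{i}}^{M}|>\ep\bigr)\le c\,\ep\,\|f(t_{i},\cdot;\te)\|_{\infty}+c_{r}\,\ep^{-r}M^{r(1-2\ga)},
\end{equation*}
using the smoothness of the density (Hypothesis \ref{hyp:coeff-sde2}) and Markov's inequality. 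Optimizing over $\ep$ and letting $r\to\infty$, combined with Cauchy--Schwarz and the $L^{p}$-boundedness of $H^{M}$, yields a bound $O(M^{(1-2\ga)(1-\delta)})$ for arbitrary $\delta>0$; replacing $\ga$ by an intermediate H\"older index $\ga'\in(\ga,H)$ in the preceding steps absorbs the loss and delivers the claimed rate $M^{-(2\ga-1)}$. For $V_{i}$, the first summand in (\ref{eq:expr-v-i}) has the same indicator form and is treated identically; the second involves $(Y_{t_{i}}-y_{t_{i}})_{+}$, which is $1$-Lipschitz, so a plain Cauchy--Schwarz already yields $M^{1-2\ga}$ without density input.

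\smallskip

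\textbf{Main obstacle.} The delicate point is the passage from pathwise $L^{p}$-error to expected-value error when the weight is the non-Lipschitz indicator $\1_{(Y_{t_{i}}>y_{t_{i}})}$. The resolution combines the smoothness of $f(t_{i},\cdot;\te)$ --- due to the ellipticity in Hypothesis \ref{hyp:coeff-sde2} --- with high-order $L^{p}$-control of the Euler error, at the cost of a small adjustment of the H\"older exponent. An alternative, possibly cleaner route would use the second representation in Proposition \ref{prop:def-H-j}: rewriting $W_{i}$ as $\be[(Y_{t_{i}}-y_{t_{i}})_{+}\,H_{(1,\ldots,m,1,\ldots,m)}(Y_{t_{i}})]$ replaces the indicator by a Lipschitz weight and sidesteps the non-Lipschitz issue entirely, at the price of $H$-terms of twice the order.
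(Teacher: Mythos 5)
Your proposal is correct and shares its backbone with the paper's argument, but it is genuinely more careful at the one place where care is needed. The paper's own proof is essentially two lines: it applies Proposition \ref{prop:ZEuler} to the linear equations governing the Malliavin derivatives to get a pathwise bound $\|v-\bar v\|_{\ga,T}\le C_2 M^{1-2\ga}$ with $C_2$ integrable, and then concludes via $\lln V_i(\te)-\bar V_i^M(\te)\rrn\le\be[\|v-\bar v\|_{\ga,T}]$; the propagation through the Young integrals and compositions defining $H_{(j_1,\ldots,j_n)}$, and above all the presence of the non-Lipschitz factor $\1_{(Y_{t_i}(\te)>y_{t_i})}$ in $w_i$ and $v_i$, are not discussed. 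Strictly speaking the pathwise difference of the indicators is of order one on the event where $Y_{t_i}$ and $\bar Y^M_{t_i}$ straddle $y_{t_i}$, so the paper's displayed pathwise estimate cannot hold as stated for $v$ itself; it is only the expectation of this flip term that is small. Your ``from pathwise to integrated error'' step supplies exactly the missing argument: the flip event is contained in $\{|Y_{t_i}-y_{t_i}|\le|Y_{t_i}-\bar Y^M_{t_i}|\}$, whose probability you control by the bounded density of $Y_{t_i}$ (available from the ellipticity in Hypothesis \ref{hyp:coeff-sde2} and Remark \ref{rmk:Y-non-degenerate}) plus Markov's inequality with arbitrarily high moments, and the small loss in the exponent is absorbed by working with an intermediate H\"older index $\ga'\in(\ga,H)$ --- legitimate since the constant is allowed to depend on $\ga$. (With plain Cauchy--Schwarz the exponent would be halved; you need H\"older with a large exponent on the $L^p$-bounded factor $H^M$, which is what you implicitly do.) Your closing remark that the $(Y_{t_i}-y_{t_i})_+$ representation of Proposition \ref{prop:def-H-j} would avoid the indicator altogether is apt, though as stated the theorem compares against $\bar W_i^M$ defined through the indicator form, so your main route is the one that proves the statement as written. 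In short: same strategy as the paper, with a correct treatment of a step the paper glosses over.
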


\begin{proof}
We focus on the bound for $| V_i(\te)- \bar V_{i}^{M}(\theta)|$, the other one being very similar. Now, applying Proposition \ref{prop:ZEuler} to the particular case of the equations governing Malliavin derivatives, we easily get
\begin{equation*}
\|v_{t} - \bar{v}\|_{\gamma, T} \le  C_{2} M^{1-2\gamma},
\end{equation*}
for an integrable random variable $C_2$. The proof is now easily finished by invoking the inequality
\begin{equation*}
\lln V_i(\te)- \bar V_{i}^{M}(\theta)\rrn \le \be \lc  \|v_{t} - \bar{v}\|_{\gamma, T} \rc.
\end{equation*}

\end{proof}

\begin{remark}
We have given two separate approximations for $V_i(\te)$ and $W_i(\te)$. In order to fully estimate $(V_i(\te)/W_i(\te))-(\bar V_{i}^{M}(\theta)/\bar W_{i}^{M}(\theta))$, one should also prove that $W_i(\te)$ is bounded away from 0. This requires a lower bound for densities of differential equations driven by fractional Broawnian motion, which are out of the scope of the current article.

\end{remark}

\subsection{Efficiency of the Monte Carlo simulation}

In this section we aim to study the computational tradeoff between  the length of a time period in the Euler discretization (i.e. $1/M$) and the number of Monte Carlo simulations of the sample path (i.e. $N$). In order to do so we consider $\bar{w}_{i}^{M}$ and $\bar{v}_{i}^{M}$ as above. 

Recall that, given $t$ units of computer time, the Monte-Carlo estimators for $W_{i}(\theta)$ and $V_{i}(\theta)$ can be written as
\begin{equation*}
\frac{1}{c_{1}(t,\frac{1}{M})} \sum_{k=1}^{c_{1}(t,\frac{1}{M})} {w}_{i,k}^{M},\;\;\frac{1}{c_{2}(t,\frac{1}{M})} \sum_{k=1}^{c_{2}(t,\frac{1}{M})} {v}_{i,k}^{M}
\end{equation*}
where $\{{w}_{i, \ell}^{M}; \, \ell\ge 1\}$ (resp. $\{{v}_{i, \ell}^{M}; \, \ell\ge 1\}$) is a sequence of i.i.d. copies of ${w}_{i}^{M}$ (resp. of ${v}_{i}^{M}$), and $c_{1}(t,\frac{1}{M}),c_{2}(t,\frac{1}{M})$ are the maximal number of runs one is allowed to consider with $t$ units of computer time.  Using the result by \cite{DG} we can state the following proposition:
\begin{proposition}\label{prop:MCvsEU}
Let $N$ be the number of Monte Carlo simulations and $M$ the number of steps of the Euler scheme, then the tradeoff between $N$ and $M$ for computing $W_{i}(\theta)$ (and similarly $V_{i}(\theta)$) is 
\[N \asymp 
M^{\frac{\tilde{\ga}}{2\ga-1}-3},
\]
for all $1/2<\gamma<H$ and $\tilde{\gamma} = Tm(d+1)$, where $T$ is the time horizon, $m$ the dimension of the observed process and $d$ the dimension of the noise process.
\end{proposition}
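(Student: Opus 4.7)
The plan is to cast the problem in the Duffie--Glynn framework \cite{DG}: we view $\hat W_i^{M,N}:=\frac{1}{N}\sum_{k=1}^N \bar w_{i,k}^M$ as a biased Monte Carlo estimator of $W_i(\theta)$ whose bias is of order $M^{-(2\gamma-1)}$ and whose statistical error is of order $N^{-1/2}$, and look for the optimal allocation of computational effort between the two sources of error.

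First, I would decompose the mean square error as
\begin{equation*}
\be\bigl[(\hat W_i^{M,N}-W_i(\theta))^2\bigr]
= \lp W_i(\theta)-\bar W_i^M(\theta)\rp^2 + \frac{1}{N}\,\mathrm{Var}(\bar w_i^M).
\end{equation*}
The bias term is bounded by $c\,M^{-2(2\gamma-1)}$ thanks to Theorem~\ref{thm:cvgce-Vi-Wi}. The variance of $\bar w_i^M$ is uniformly bounded in $M$: indeed, the expression of $\bar w_i^M$ given by Propositions~\ref{prop:def-H-j-young} and~\ref{prop:comput-H} consists of products and integrals of quantities admitting $L^p$ moments uniformly in $M$, thanks to the moment estimates of Propositions~\ref{prop:moments-sdes} and~\ref{prop:ZEuler} applied to the Euler approximations of $Y$, $\nabla_\ell Y$, $D^{i_1\ldots i_n}Y$ and $\eta$.

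Second, I would quantify the cost of one Monte Carlo draw of $\bar w_i^M$. Each draw requires simulating on the Euler grid of mesh $T/M$ the process $Y$ itself via~(\ref{eq:euler}), the iterated Malliavin derivatives $D^{i_1\ldots i_n}Y$ from Lemma~\ref{lem:higher-mall-deriv-Y}, the inverse Malliavin matrix $\eta=\gamma^{-1}$ via~(\ref{eq:sde-ga-inverse}) together with its Malliavin derivatives from Proposition~\ref{prop:mall-drv-eta}, and the parameter derivatives $\nabla_\ell Y$, $\nabla_\ell \eta$; altogether these amount to $O(m(d+1))$ auxiliary linear SDEs to be propagated on the grid, each of them covered by Proposition~\ref{prop:ZEuler}. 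On top of this, each application of the Skorohod--to--Young correction~(\ref{eq:correc-sko-strato}) and each step of the Malliavin recursion~(\ref{eq:der-k-K}) requires the quadrature of a singular double integral against the kernel $|r-s|^{2H-2}$, costing $O(M^2)$ operations per evaluation. Tracking these contributions through the $m$-fold (respectively $2m$-fold) composition appearing in~(\ref{eq:H-j-composition-U-j}) produces a per-sample cost of the form $C(M)\asymp M^{\tilde\gamma}$ with $\tilde\gamma=Tm(d+1)$.

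Third, I would apply the Duffie--Glynn balance: for a fixed total computational budget $t$ satisfying $N\cdot C(M)=t$, minimizing the MSE $M^{-2(2\gamma-1)}+N^{-1}$ under this constraint yields the optimal pair $(N^{*},M^{*})$; eliminating the budget between the two resulting scaling relations then leads to the claimed power-law $N\asymp M^{\tilde\gamma/(2\gamma-1)-3}$, valid for every $1/2<\gamma<H$. The same analysis with $\bar w_i^M$ replaced by $\bar v_i^M$ gives the corresponding statement for $V_i(\theta)$. The main obstacle is the second step: isolating the exponent $\tilde\gamma=Tm(d+1)$ requires a delicate combinatorial accounting of the way the nested applications of the operator $U_p$ propagate simultaneously through the Malliavin recursion of Proposition~\ref{prop:comput-H}, the Skorohod--to--Young corrections, and the Euler simulations of the auxiliary linear SDEs of Lemmas~\ref{lem:higher-mall-deriv-Y} and~\ref{lem:higher-mall-deriv-nabla-Y}. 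Once this exponent has been identified, the Duffie--Glynn optimization is essentially formal.
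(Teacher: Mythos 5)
Your overall architecture (bias--variance decomposition for $\hat W_i^{M,N}$, then a cost-constrained optimization in the Duffie--Glynn spirit) is the same family of ideas as the paper, and your first step essentially reproduces the conditions the paper checks before invoking Theorem 1 of \cite{DG} (uniform convergence of $\bar w_i^M$, bounded moments, Euler rate $M^{1-2\gamma}$ from Theorem \ref{thm:cvgce-Vi-Wi}). The genuine gap is in your second and third steps, i.e.\ precisely where the exponent is supposed to come from. In the paper, $\tilde\gamma=Tm(d+1)$ is \emph{not} the degree of a polynomial cost: the computer time to generate one copy of $\bar w_{t_i}^M$ is taken to be \emph{linear} in $M$, namely $\tau(1/M)=Tm(d+1)\,M=\tilde\gamma M$, and $\tilde\gamma$ enters the final exponent only through the explicit optimal-allocation rule quoted from the cited theorem, $1/M\asymp t^{(1-2\gamma)/(\tilde\gamma+2-4\gamma)}$, combined with the budget identity $t\asymp\tilde\gamma MN$. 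Your claim that counting the auxiliary linear SDEs and the singular double quadratures yields a per-sample cost $C(M)\asymp M^{\tilde\gamma}$ cannot be correct: that accounting produces a fixed low-degree polynomial in $M$ (e.g.\ $M^2$ for the kernel $|r-s|^{2H-2}$ integrals, higher but bounded degree for the iterated Malliavin derivatives), whereas an operation count whose \emph{exponent} equals $Tm(d+1)$ --- and hence depends on the continuous time horizon $T$ --- is not meaningful; you yourself flag this "combinatorial accounting" as the main obstacle and do not carry it out.

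Moreover, even granting your cost model, your third step does not deliver the stated exponent. Minimizing $M^{-2(2\gamma-1)}+N^{-1}$ subject to $N\,C(M)=t$ forces the two error terms to balance at the optimum, which gives $N\asymp M^{2(2\gamma-1)}$ \emph{independently} of the polynomial degree of $C(M)$ (the cost only affects how $M^{*}$ and $N^{*}$ scale with $t$, not the relation between them); eliminating $t$ therefore cannot make $\tilde\gamma$ appear, and the asserted conclusion $N\asymp M^{\tilde\gamma/(2\gamma-1)-3}$ does not follow --- under your own assumptions it is false except for coincidental values of $\tilde\gamma$. To obtain the proposition as stated you must do what the paper does: verify the hypotheses of Theorem 1 in \cite{DG} with the linear per-path cost $\tau(1/M)=\tilde\gamma M$, import that theorem's asymptotically optimal step-size schedule in $t$, and then use $t\asymp\tilde\gamma MN$ to convert it into the relation between $N$ and $M$; a generic bias--variance balance under a budget constraint is a different (and here inequivalent) optimization.
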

\begin{proof}
We discuss the proof only for $W_{i}$, by following exactly the same steps we can obtain the same result for $V_{i}$.

We only need to check that our process $w$  satisfies the conditions of Theorem 1 in \cite{DG}.
\begin{enumerate}
\item[(i)] We can easily see that the discretized $\bar{w}^{M}_{t_{i}}$ converges uniformly to $w_{t_{i}}$.
\item[(ii)] In addition, we have bounded moments of $w_{t_{i}}$, thus 
$\mathbf{E}[\bar{W}_{t_{i}}^{2}] \rightarrow \mathbf{E}[w_{t_{i}}^{2}]$.
\item[(iii)]From Proposition \ref{thm:cvgce-Vi-Wi} we have that the rate of convergence of the Euler scheme of $\bar{w}_{t_{i}}^{M}$ is $M^{1-2\gamma}$, for $1/2<\gamma<H$. 
\item[(iv)] The computer time required to generate $\bar{w}_{t_{i}}^{M}$ is given by $\tau(1/M)$, which satisfies:
\[\tau(1/M) =  T m (d+1)M = \tilde{\gamma}M\]
where $T$ is the length of the time period, $m$ is the dimension of the SDE, $d$ is the dimension of the fBm and $M$ is the number of Euler steps.
\end{enumerate}
By applying Theorem 1 (by \cite{DG}) the optimal rule for choosing the number of Monte-Carlo simulations and the number of Euler steps is chosen such that the asymptotic error is minimized. Therefore, for $t$ the total budget of computer time, as $t$ increases, then the Euler step should converge to zero with order $\frac{1-2\gamma}{\tilde{\gamma}+2-4\gamma}$ or equivalently:
\[ \frac{1}{M} \asymp t^{\frac{1-2\gamma}{\tilde{\gamma}+2-4\gamma}} \text{ thus } t \asymp M^{-\frac{\tilde{\gamma}+2-4\gamma}{1-2\gamma}}. \]
But the number of operations needed for an arbitrary Monte Carlo simulation $t_0$ is equal to $\tilde{\gamma} M N$. Thus, we finally obtain that
$N \asymp M^{-\frac{\tilde{\gamma}+2-4\gamma}{1-2\gamma}-1}$.
\end{proof}

\subsection{Discretization of the score function}

Consider the following discretized version of the score function, i.e. $\nabla_{l}\ell_{n}(\te)$:
\begin{equation}
{\hat\nabla_{l}\ell_{n}(\te)} = \frac{\hat{V_{i}}}{\hat{W_{i}}} := 
\frac{\frac{1}{N} \sum_{k=1}^{N}{\bar v_{i,k}^{M}}}{\frac{1}{N} \sum_{k=1}^{N}{\bar w_{i,k}^{M}}},
\end{equation}
where ${\bar w_{1,k}^{M}},{\bar w_{2,k}^{M}}, \ldots$ and ${\bar v_{1,k}^{M}},{\bar v_{2,k}^{M}}, \ldots$ are iid copies of ${\bar w_{i}^{M}}$ and ${\bar v_{i}^{M}}$ respectively. Our aim in this section is to give a global bound for the mean square error obtained by approximating $\nabla_{l}\ell_{n}(\te)$ by ${\hat\nabla_{l}\ell_{n}(\te)}$.

\begin{proposition}\label{prop:scoreOrder}
The discretized score function ${\hat\nabla_{l}\ell_{n}(\te)}$ converges to the continuous score function $\nabla_{l}\ell_{n}(\te)$ with rate of convergence of order $M^{-(2\gamma-1)}$, where $1/2<\gamma<H$ and $M$ is the number of Euler steps used in the discretization. 
\end{proposition}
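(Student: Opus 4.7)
My plan is to reduce the global score function error to term-by-term errors in the sum $\sum_{i=1}^{n} V_i(\te)/W_i(\te)$, and for each term to separate the Euler bias from the Monte Carlo fluctuation. For fixed $i$, the standard ratio identity gives
\begin{equation*}
\frac{\hat V_i}{\hat W_i}-\frac{V_i(\te)}{W_i(\te)}
= \frac{1}{\hat W_i\, W_i(\te)}\Bigl[\bigl(\hat V_i-V_i(\te)\bigr)\, W_i(\te) - V_i(\te)\bigl(\hat W_i-W_i(\te)\bigr)\Bigr].
\end{equation*}
Each numerator difference is then split as
\begin{equation*}
\hat V_i-V_i(\te) = \bigl(\hat V_i-\bar V_i^{M}(\te)\bigr) + \bigl(\bar V_i^{M}(\te)-V_i(\te)\bigr),
\end{equation*}
and symmetrically for $W_i$, isolating the Monte Carlo contribution (first summand) from the Euler contribution (second summand).

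The Euler contribution is governed by Theorem \ref{thm:cvgce-Vi-Wi}, which already gives the target rate $|\bar V_i^{M}-V_i(\te)|+|\bar W_i^{M}-W_i(\te)|\le c\,M^{-(2\gamma-1)}$. The Monte Carlo contribution is a centered i.i.d.\ average, whose $L^2$ fluctuation is of order $\sigma_i/\sqrt{N}$ with $\sigma_i^2$ bounding $\mathrm{Var}(\bar v_i^{M})+\mathrm{Var}(\bar w_i^{M})$ uniformly in $M$. The uniform-in-$M$ variance bound follows from the $L^p$ estimates for $Y$, $\nabla Y$, $DY$ and $\gamma_Y^{-1}$ recalled in Propositions \ref{prop:moments-sdes}, \ref{prop:EulerSDE} and \ref{prop:ZEuler}, together with the fact that each $\bar w_i^{M},\bar v_i^{M}$ is a polynomial functional of those quantities. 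By Proposition \ref{prop:MCvsEU} we have picked $N\asymp M^{\tilde\gamma/(2\gamma-1)-3}$; a direct check of exponents shows $N^{-1/2}\lesssim M^{-(2\gamma-1)}$ for this choice, so the Monte Carlo piece is of the same order as the Euler piece and not dominant.

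The last ingredient is a lower bound on the denominator $\hat W_i\, W_i(\te)$. Since $W_i(\te)=f(t_i,y_{t_i};\,\te)$ is strictly positive by the smooth density result preceding Section \ref{sec:exp-log-likelihood}, a pointwise bound $W_i(\te)\ge \varepsilon_i>0$ is available for each $i$; once this is in hand, $|\hat W_i - W_i(\te)|=o(1)$ in $L^2$ propagates to $\hat W_i\ge \varepsilon_i/2$ with high probability, so the prefactor $1/(\hat W_i W_i(\te))$ is $O_\bp(1)$. Summing over $i=1,\dots,n$ and combining the Euler and Monte Carlo bounds then yields the announced rate $M^{-(2\gamma-1)}$.

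The main obstacle here is precisely this denominator estimate: although a pointwise positive lower bound for $W_i(\te)$ suffices for a fixed data set, a uniform-in-$\te$ control requires sharp (Gaussian-type) lower bounds for densities of solutions to fractional SDEs, which as acknowledged in the remark following Theorem \ref{thm:cvgce-Vi-Wi} lie outside the scope of the paper. Modulo that uniform lower bound, the remainder of the proof is a routine ratio estimate combining Theorem \ref{thm:cvgce-Vi-Wi} with a Chebyshev-type control of the Monte Carlo fluctuations at the rate fixed by Proposition \ref{prop:MCvsEU}.
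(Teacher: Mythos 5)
Your decomposition into an Euler bias term controlled by Theorem \ref{thm:cvgce-Vi-Wi} plus a centered Monte Carlo fluctuation of order $N^{-1/2}$, balanced through the choice of $N$ from Proposition \ref{prop:MCvsEU}, is essentially the paper's own argument, which bounds $\mathbf{E}(\hat{W_i}-W_i)^2\asymp M^{2-4\gamma}+1/N$ in the same way and checks that the Euler term dominates. Your extra care with the ratio $\hat{V_i}/\hat{W_i}$, requiring $W_i(\te)$ bounded away from zero, goes slightly beyond the paper's proof (which stops at the separate estimates for $\hat{V_i}$ and $\hat{W_i}$ and asserts the claim ``follows easily''), and your identification of the density lower bound as the genuine missing ingredient matches exactly the caveat the paper itself records in the remark following Theorem \ref{thm:cvgce-Vi-Wi}.
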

\begin{proof}
We discuss the idea of the proof for the $W_{i}$ term first:
\begin{eqnarray*}
\mathbf{E}\left( \hat{W_{i}} - W_{i}\right)^{2} &=& \mathbf{E}\left( \frac{1}{N} \sum_{k=1}^{N} \bar{w}^{M}_{i,k}  - \mathbf{E}[w_{i}(\theta)]\right)^{2}\\
&=& \mathbf{E}\biggl( \frac{1}{N} \sum_{k=1}^{N} \bar{w}^{M}_{i,k} -  \frac{1}{N} \sum_{k=1}^{N} w_{i,k}  +  \frac{1}{N} \sum_{k=1}^{N} w_{i,k} - \mathbf{E}[w_{i}(\theta)]\biggr)^{2}.
\end{eqnarray*}
Thanks now to the independence property between Monte Carlo runs, we get
\begin{align*}
&\mathbf{E}\left( \hat{W_{i}} - W_{i}\right)^{2} 
\leq \frac{2}{N} \sum_{k=1}^{N} \mathbf{E}(\bar{w}^{M}_{i,k} - w_{i,k})^{2} \;+\; 2\, \mathbf{E}\biggl( \frac{1}{N} \sum_{k=1}^{N} w_{i,k} - \mathbf{E}[w_{i}(\theta)]\biggr)^{2}\\
&=\; \frac{1}{N} \sum_{k=1}^{N} \text{(Euler MSE)}^{2} \;+\; \text{(Monte Carlo MSE)}^{2}
\asymp (M^{1-2\gamma})^{2} + \frac{1}{N},
\end{align*}
and thus
\[{\rm MSE} \left( \hat{W_{i}} - W_{i}\right) \asymp \sqrt{(M^{1-2\gamma})^{2} + \frac{1}{N}}.\]

Now, if we use Proposition \ref{prop:MCvsEU}, i.e. $N \asymp M^{-\frac{\tilde{\gamma} +2-4\gamma} {1-2\gamma} -1}$,
for all $1/2<\gamma<H$, and $\tilde{\gamma} = Tm(d+1)$, where $T$ is the time horizon, $m$ the dimension of the observed process and $d$ the dimension of the noise process, we have
\[MSE \left( \hat{W_{i}} - W_{i}\right) \asymp \sqrt{M^{2-4\gamma} + M^{\frac{\tilde\ga}{1-2\ga}+3}}\asymp M^{1-2\gamma}, \]
since the first is the dominant term above.

Following the same procedure, we can show that ${\rm MSE}( \hat{V_{i}} - V_{i}) \asymp M^{1-2\gamma}$ and thus the claim of the proposition follows easily.

\end{proof}

\begin{remark}
In Proposition \ref{prop:scoreOrder} the rate of convergence is independent of the dimension of the problem, i.e. it is independent of the parameter $\tilde{\gamma} = T m (d+1)$.
\end{remark}

\section{Numerical Examples}
\label{sec:num-examples}

In this section our aim is to investigate the performance of the suggested maximum likelihood method in practice. We study the one-dimensional fractional Ornstein-Uhlenbeck process, a linear two-dimensional system of fractional SDEs and then some real data given by a financial time series. Before presenting our results, we first discuss some technical issues raised by the algorithmic implementation of our method.

The goal is to find the root of the quantity $\nabla_{l}\ell_{n}(\theta)$ with respect to $\theta$. We can divide  this procedure in two parts. The first part consists in computing the root of the log-likelihood using a stochastic approximation algorithm. This is a stochastic optimization technique firstly introduced by Robbins and Monro (1951) that is used when only noisy observations of the function are available. 
In our case it is appropriate, since we want to solve
\[\nabla_{l}\ell_{n}(\theta) = 0,\]
where $\nabla_{l}\ell_{n}(\theta)$ is given by Theorem \ref{prop:loglik} and has to be approximated by ${\hat\nabla_{l}\ell_{n}}(\te)$. Thus, the recursive procedure is of the following form
\begin{equation}\label{eq:iter}
\hat{\theta}_{k+1} = \hat{\theta}_{k} - a_{k} {\hat\nabla_{l}\ell_{n}} (\hat{\theta}_{k}).
\end{equation}
where ${\hat\nabla_{l}\ell_{n}}$ is the estimate of  $\nabla_{l}\ell_{n}$ at the k-th iteration based on the observations and $a_{k}$ is a sequence of real numbers such that $\sum_{k=1}^{\infty}a_k=\infty$ and $\sum_{k=1}^{\infty}a_k^2<\infty$. Under appropriate conditions (see for example \cite{BL}), the iteration in (\ref{eq:iter}) converges to $ \theta$ almost surely. The step sizes satisfy $ a_{k} > 0$ and the way that we choose them can be found in~\cite{KY}.

The second part consists of the computation of ${\hat\nabla_{l}\ell_{n}} (\hat{\theta}_{k})$ at each step of the stochastic approximation algorithm. Thus, for a given value of $\theta_{k}$ (the one computed at the $k$-th iteration) we want to compute ${\hat\nabla_{l}\ell_{n}}(\theta_{k})$ when we are given $n$ discrete observations of the process:  $y_{t_{i}}$, $i=1,\ldots, n$. Here, we describe the main idea of the algorithm we use for only one step. Thus, assume that we are at $[t_{i-1}, t_{i}]$, and at time $t_{i}$ we obtain the $i$-th observation. We want to compute $W_{i}(\theta)$ and $V_{i}(\theta)$ according to expressions~\eqref{eq:expr-W-i} and \eqref{eq:expr-V-i} respectively.  To compute the expectations we use simple Monte-Carlo simulations.Therefore, we discretize the time interval into $N$ steps
\[t_{i-1} = s_{0} < s_{1} < \cdots < s_{N} = t_{i}.\]
From each simulated path (apart from that of fBm) we only need to keep the terminal value which is the value of the process at time $t_{i}$. The algorithm is the following

\begin{enumerate}
\item \label{step:fbmsim} Simulate $N$ values of fBm in the interval $[t_{i-1}, t_{i}]$ using for example the circulant matrix method (any exact -preferably- simulation technique can be used).
\item \label{step:Ysim} Using the simulated values from step \ref{step:fbmsim} and an Euler scheme for the SDE (1), simulate the value of the process at time $t_{i}$. For example, for $k=0,\ldots, N$
\begin{equation*}
\bar{Y}_{s_{k}}^{M} = \bar{Y}_{s_{k-1}}^{M} + \mu(\bar{Y}_{s_{k}}^{M}) (s_{k}-s_{k-1}) + \sum_{j=1}^{d} \sigma^{(j)}(\bar{Y}_{s_{k-1}}^{M}) (B^{(j)}_{s_{k}} -   B^{(j)}_{s_{k-1}}).
\end{equation*}
\item \label{step:Wind}Using step \ref{step:Ysim} and the observation at time $t_{i}$, compute the indicator function $\mathbf{1}_{(Y_{t_{i}}(\theta)> y_{t_{i}})}$.
\item \label{step:DYsim} Using  step \ref{step:fbmsim} and an Euler scheme simulate  $D_{t_{i}}Y_{\tau}^{i}$, as given in Lemma 3.3 for $n=1$ -first Malliavin derivative-.
\item \label{step:etasim} Using  step \ref{step:fbmsim} and an Euler scheme simulate $\eta_{t_{i}}^{kj}$, $k, j=1,\ldots,m$, as given in Proposition 3.7.
\item Steps \ref{step:DYsim} and \ref{step:etasim} are used to compute $Q_{st_{i}}^{pj}$, $p\in\{1,\ldots,m\}$, $j\in\{1,\ldots,d\}$ as defined in Propositions \ref{prop:def-H-j} and \ref{prop:comput-H}.
\item Simulate the Malliavin derivative of the product $D_s [Y_{t} Q_{rt}^{pj}]$.
\item \label{step:Up} Using the previous steps, numerical integration for the double integral and numerical integration for the stochastic integral we compute $U_{p}(Y_{t_{i}}(\theta))$ as defined in Proposition \ref{prop:def-H-j}.
\item \label{step:HW} Recursively compute $H_{(1,\ldots,m)}(Y_{t_{i}}(\theta))$ as given in (19).
\item \label{step:lastA} Combine steps \ref{step:Wind} and \ref{step:HW} to obtain the kernel $W_{i}(\theta)$.
\item \label{step:V_i} We repeat steps \ref{step:fbmsim} through \ref{step:lastA} $N$ times and we average these values to obtain an estimate for the expectation $W_{i}(\theta)$. 
\end{enumerate}
Using a similar procedure we can obtain an estimate for the expectation $V_{i}(\theta)$. Finally, for each $i=1, \ldots, n$ we compute $V_{i}(\theta)/W_{i}(\theta)$ and sum over $i$ to obtain the desired value of the log-likelihood at $\theta_{k}$.

\smallskip

We have completed the study of our numerical approximation of the log-likelihood, and are now ready for the analysis of some numerical examples.

\subsection{Fractional Ornstein-Uhlenbeck process}
Though our method can be applied to highly nonlinear contexts, we focus here on some linear situations, which allow easier comparisons with existing methods or exact computations. Let us first study the one-dimensional fractional Ornstein-Uhlenbeck process, i.e.
\begin{equation}\label{eq:OU}
dY_{t} = -\lambda Y_{t} dt \; + \; dB_{t},
\end{equation}
where the solution is given $Y_{t}(\lambda) = \int_{0}^{t} e^{-\lambda (t-s)} dB_{s}$ (notice the existence of an explicit solution here). In this case our methodology is quite simplified. The log-likelihood can be written as follows:
\begin{equation*}
\partial_{\lambda}\ell(\lambda;y) = \sum_{i=1}^{n} \frac{\mathbf{E} \biggl[\partial_{\lambda} Y_{t}(\lambda) \; \mathbf{1}_{(Y_{t}(\lambda)>y)} \;H_{(1,1)}(\lambda) + \Bigl( Y_{t}(\lambda) - y \Bigr)_{+} \partial_{\lambda} H_{(1,1)}(\lambda) \biggr]}{\mathbf{E} \biggl[ \mathbf{1}_{(Y_{t}(\lambda)>y)}\; H_{(1)}\Bigl( Y_{t}(\lambda), 1\Bigr) \biggr]}.
\end{equation*}
The Malliavin derivative of $Y_{t}(\lambda)$  satisfies the following ODE
\[D_{s}Y_{t}(\lambda) = 1 - \lambda \int_{s}^{t} D_{s}Y_{u}(\lambda) du,\]
with solution
$D_{s}Y_{t}(\lambda) = e^{-\lambda\; t}\; \mathbf{1}_{\{s\leq t\}}$. The corresponding norm is
\[\| D_{\cdot} Y_{t} (\lambda) \|^{2} = c_H\; \int_{s}^{t} \int_{s}^{t} e^{-\lambda (u+v)} |u-v|^{2H-2} dudv. \]
The higher order derivatives of $Y_{t}(\lambda)$ are equal to zero. Therefore,
\begin{equation*}
H_{(1)}\Bigl( Y_{t}(\lambda) \Bigr) = \frac{1}{\left\| D_{\cdot} Y_{t}(\lambda) \right\|^{2} }\; \int_{s}^{t} e^{-\lambda u} dB_{u}
\end{equation*}
and thus
\begin{eqnarray*}
H_{(1,1)}(\lambda) &=& \frac{1}{\left\| D_{\cdot} Y_{t}(\lambda) \right\|^{4} } \int_{s}^{t}  \int_{s}^{t} e^{-\lambda (u+v)} dB_{u} dB_{v} - c_{H} \left\| D_{\cdot} Y_{t}(\lambda) \right\|^{-2}.
\end{eqnarray*}
The derivative with respect to the unknown parameter $\lambda$ satisfies
\begin{equation*}
 \partial_{\lambda} Y_{t}(\lambda) =  -\int_{0}^{t} Y_{s}(\lambda) -\la \partial_{\lambda} Y_{s}(\lambda) ds
\end{equation*}
with solution $\partial_{\lambda} Y_{t}(\lambda) = \int_{0}^{t} (t-s) e^{-\la(t-s)} dB_s$. The last term we need to compute is:
\begin{eqnarray*}
\partial_{\lambda} H_{(1,1)}(\lambda) &=& \frac{1}{\|D_{\cdot} Y_{t}(\lambda) \|^{8}} 
\biggl[ \|D_{\cdot} Y_{t}(\lambda) \|^{4} \int_{s}^{t} \int_{r}^{t} -(u+v) e^{-\lambda (u+v)} dB_{u} dB_{v} \\
&& -  2c_H\|D_{\cdot} Y_{t}(\lambda) \|^{2} \int_{s}^{t} \int_{r}^{t} -(u+v) e^{-\lambda (u+v)} |u-v|^{2H-2} du dv \biggr]\\
&& - \frac{c_H^{2} \int_{s}^{t} \int_{r}^{t} -(u+v) e^{-\lambda (u+v)} |u-v|^{2H-2} du dv  }{\|D_{\cdot} Y_{t}(\lambda) \|^{4}}.
\end{eqnarray*}

Now, we compute the MLE following the algorithm we described above. The results we obtained are summarized in the following table:\\
\begin{center}
\begin{tabular}{|c|| c| c|}
\hline 
  True $\lambda$ & MLE $\hat{\lambda}$ & Standard Error \\ \hline \hline
  0.5 & 0.497 & 0.00369\\ 
  4 & 3.861 & 0.00127\\\hline
\end{tabular}
\end{center}
\begin{remark}
The value of $H$ used for the simulation of the process is 0.6. The number of observations is $n=50$, the number of Euler steps is $M=500$, the number of stochastic approximation steps is $K=50$ and the number of MC simulations $N=500$.
\end{remark}

\subsection{Two-dimensional fractional SDE}
In this section we study the following system of fractional OU processes:
\begin{eqnarray}
dY_{t}^{(1)} = -\alpha Y_{t}^{(2)} dt \; + \;\beta dB_{t}^{(1)}\notag\\
dY_{t}^{(2)} = -\beta Y_{t}^{(1)} dt \; + \; \beta dB_{t}^{(2)}.
\end{eqnarray}
In this case, the computations are more involved even though the SDEs are linear functions of $Y$. Furthermore, the parameter we want to estimate is two-dimensional as well ($\theta = (\alpha, \beta)^T$), which complicated  the optimization procedure. Therefore, instead of computing only one  derivative, we need to compute both derivatives with respect to $\alpha$ and $\beta$ and then compute the solution of the system of two equations
\begin{equation*}
\nabla_{\alpha}\ell(\alpha, \beta;y) = 0, \qquad
\nabla_{\beta}\ell(\alpha, \beta;y) = 0,
\end{equation*}
where
\begin{multline*}
\nabla_{l}\ell(\alpha, \beta;y) = \sum_{i=1}^{n} [\mathbf{E} [ \mathbf{1}_{(Y_{t}(\alpha, \beta)>y)}\; H_{(1,2)}( Y_{t}(\alpha, \beta)) ]^{-1}\\
 \times \{\mathbf{E} [\nabla_{l} Y_{t}(\alpha, \beta) \; \mathbf{1}_{(Y_{t}(\alpha, \beta)>y)} \;H_{(1,2,1,2)}(\alpha, \beta) + ( Y_{t}(\alpha, \beta) - y )_{+} \nabla_{l} H_{(1,2,1,2)}(\alpha, \beta) ]\}
\end{multline*}
and $l=\alpha \text{ or } \beta$. The Malliavin derivative of $Y_{t}$ computes as follows:
\begin{equation*}
D_{s}Y_{t}^{(1)} = \beta - \alpha \int_{s}^{t} D_{s}Y_{u}^{(2)} du \qquad
D_{s}Y_{t}^{(2)} = \beta - \beta \int_{s}^{t} D_{s}Y_{u}^{(1)} du.
\end{equation*}
The covariance matrix $\gamma_{t}$ is given by $(\langle D_{\cdot}Y_{t}^{i}, D_{\cdot}Y_{t}^{j} \rangle)_{1\leq i,j\leq 2}$. The inverse of the covariance matrix satisfies the following SDE
\[ \gamma_{t}^{-1} = -\int_{0}^{t} [\gamma_{u}^{-1} M + M^{T} \gamma_{u}^{-1} ] du,  \]
where 
\[ M=
\begin{bmatrix}
0 & \alpha  \\
\beta & 0 
\end{bmatrix}
\]
Now, it remains to compute the quantities $H_{(1,2)}$ and $H_{(1,2,1,2)}$. This can be done using the recursive formulas in Proposition 3.12, but we need to keep in mind that higher order derivatives of $Y$ are equal to zero, thus they will be simplified. Indeed,
\[H_{(1)}(Y_{t}) = \sum_{j=1}^{2} Y_{t}  \int_{0}^{t} (\gamma_{s}^{-1})^{1j} D_{s}Y_{t}^{j} dB_{s}^{j} - c_{H} \int_{0}^{t}\int_{0}^{t} D_{s}Y_{t}^{j}Q_{rt} |r-s|^{2H-2}dr ds.\]
Moreover, we can easily see that
\[ H_{(1,2)} (Y_{t}) = H_{(1)}(Y_{t})\int_{0}^{t} Q_{st}dB_{s} - c_{H} \int_{0}^{t}\int_{0}^{t} D_{s}H_{(1)}(Y_{t}) Q_{rt}|r-s|^{2H-2}dr ds \]
\[ H_{(1,2,1,2)} (Y_{t}) = H_{(1,2,1)}(Y_{t})\int_{0}^{t} Q_{st}dB_{s} - c_{H} \int_{0}^{t}\int_{0}^{t} D_{s}H_{(1,2,1)}(Y_{t}) Q_{rt}|r-s|^{2H-2}dr ds \]
Of course, recall that $Q_{st}^{pj} = (\gamma_{s}^{-1})^{pj} D_{s}Y_{t}^{j}$. In practice, these quantities are computed recursively. The last step is to compute the derivative of $H_{(1,2,1,2)} (Y_{t})$ with respect to $\alpha$ and $\beta$, which in this case is not as complicated and compute the MLEs using the algorithm discussed in the previous section. The table below summarizes our results, and we  have plotted the corresponding histograms in Figure \ref{fig:hist}.\\
 \begin{center}
\begin{tabular}{|c |c || c |c|}
\hline 
  Parameter & True Value & MLE  & Standard Error \\ \hline \hline
  $\alpha$ & 2 & 2.003 &  0.0518\\
  $\beta$ & 4 & 3.987 &  0.0157\\ \hline
\end{tabular}

\end{center}

 \begin{figure}
\centering
\begin{tabular}
[c]{cc}
\includegraphics [width=0.45\linewidth, height=0.45\linewidth]{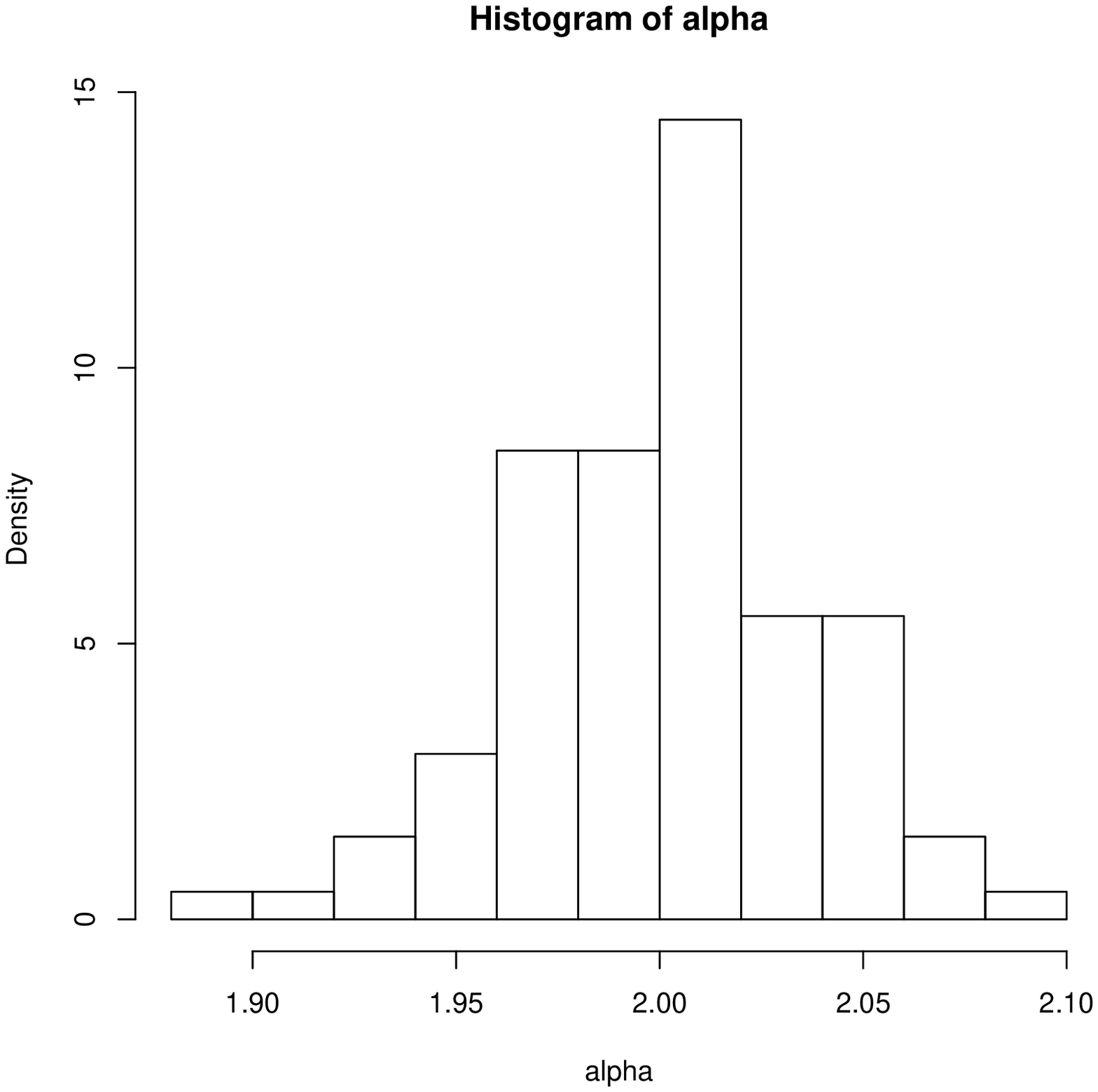} &
\includegraphics [width=0.45\linewidth, height=0.45\linewidth]{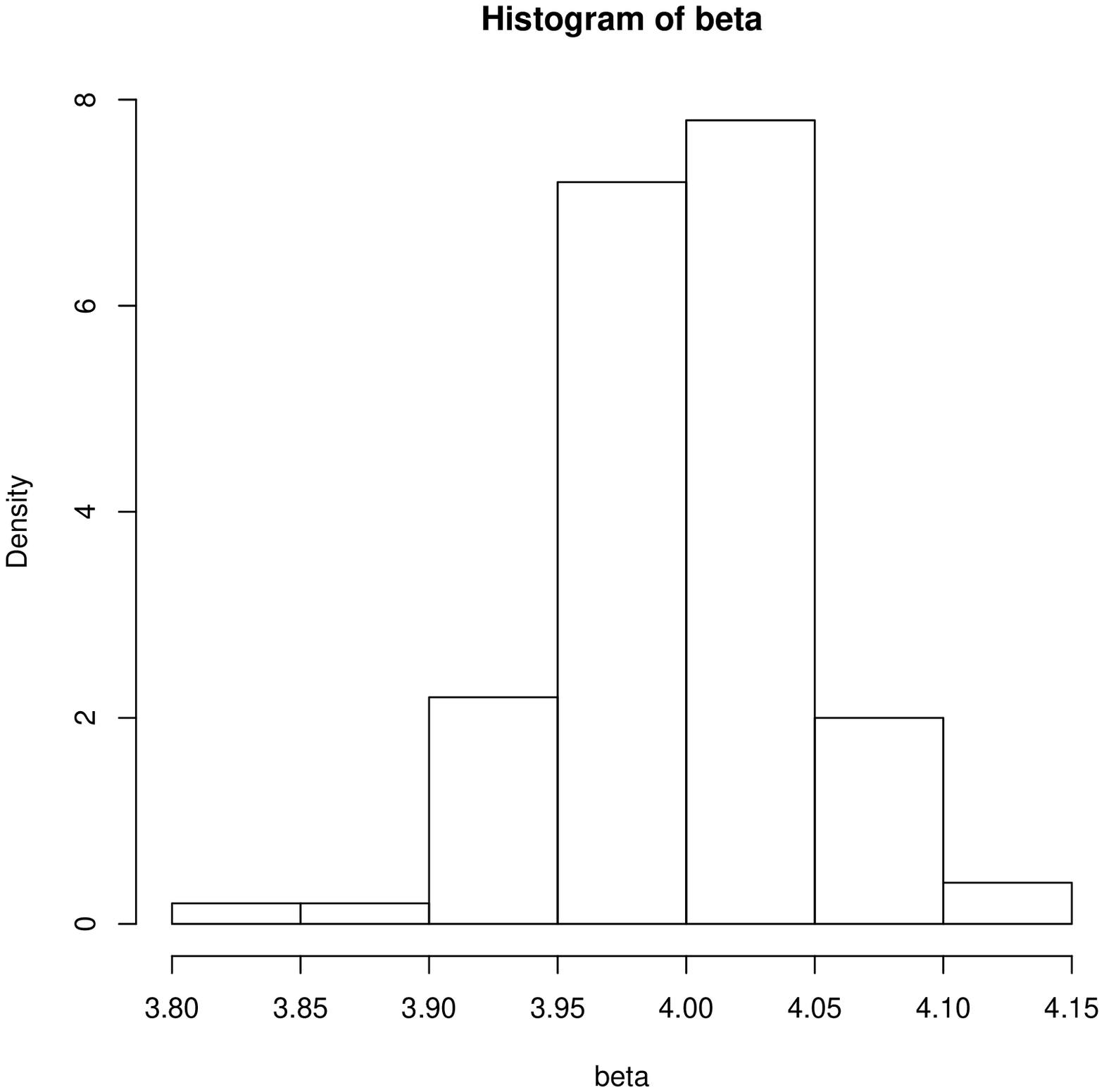}\\
Drift Parameter. & Diffusion Parameter.\\ \vspace{1cm}
\end{tabular}
\caption{Empirical Distribution of the estimators for $\alpha$ and $\beta$.}
\label{fig:hist}
\end{figure}

\begin{remark}
The value of $H$ used for the simulation of the process is 0.6. The number of observations is $n=50$, the number of Euler steps is $N=500$, the number of stochastic approximation steps is $K=50$ and the number of MC simulations $M=500$.
\end{remark}

\subsection{Application to financial data}

One of the most popular applications of fractional SDEs is in finance. Hu and Oksendal, \cite{HOk}, introduced the  fractional Black-Scholes model in order to account for inconsistencies of the existing models in practice. More specifically,  the stock price is described therein by a fractional geometric Brownian motion with Hurst parameter $1/2< H <1$. The choice of this model is based on empirical studies that displayed the presence of long-range dependence on  stock prices, for example in Willinger, Taqqu and Teverovsky, \cite{WTT}. 

However, the presence of fractional Brownian motion in the model allows for arbitrage in the general setting. It has been shown that arbitrage opportunities can be avoided in a number of ways, for example the reader can refer to Rogers \cite{Ro}, Dasgupta and Kallianpur \cite{DK} and Cheridito \cite{Ch}. We choose to model the stock price as as follows:
\begin{equation}
dS_{t} = \mu S_{t} dt  + \sigma dB_{t},
\end{equation}
where $B$ is a fractional Brownian motion with Hurst index $1/2<H<1$. For this SDE (as well as for a more general class of fractional SDEs) Guasoni, \cite{Gua}, proved that  there is no arbitrage when transaction costs are present. 

 Our goal is to estimate  the unknown parameters $\mu$ and $\sigma$ based on daily observations of the S\&P 500 index (data from June 2010 until December 2010). Since the Hurst parameter is piece-wise constant, we devide the data in three groups (of 50 daily observations each) and we compute for each one the Hurst index using the Rescaled-Range (R/S) statistic. We obtain that for the first group of data $\hat{H_{1}}=0.59$, for the second $\hat{H_{2}}=0.63$ and for the third one $\hat{H_{3}}=0.61$. For the different groups, we apply our maximum likelihood approach in order to estimate $\mu$ and $\sigma$. The estimates are summarized in the following table:
\begin{center}
\begin{tabular}{|c || c| c| c|}
\hline 
 Estim. Parameters & Group 1: $\hat{H_{1}}=0.59$  & Group 2: $\hat{H_{2}}=0.63$  & Group 3: $\hat{H_{3}}=0.61$ \\ \hline \hline
  $\hat{\mu}$ & 0.015 (0.0123) & 0.019 (0.0144) & 0.011 (0.0214) \\ 
  $\hat{\sigma}$ & 0.352 (0.058) & 0.339 (0.046) & 0.341 (0.024) \\ \hline
\end{tabular}
\end{center}
\begin{remark}
The volatility is computed in years. In addition, during this period of time the historical volatility is around 0.38, which is coherent with our own estimation.
\end{remark}

\bigskip

\end{document}